\DeclareMathAlphabet\mathbfcal{OMS}{cmsy}{b}{n}
\numberwithin{equation}{section}
\newtheorem{theorem}{Theorem}[section]
\newtheorem{definition}[theorem]{Definition}
\newtheorem{lemma}[theorem]{Lemma}
\newtheorem{proposition}[theorem]{Proposition}
\newtheorem{corollary}[theorem]{Corollary}
\newtheorem{remark}[theorem]{Remark}
\newtheorem{example}[theorem]{Example}
\newtheoremstyle{named}{}{}{\itshape}{}{\bfseries}{.}{.5em}{#1 \thmnote{#3}}
\theoremstyle{named}
\newtheorem*{namedtheorem}{Property}
\newcommand{\PP}{{\mathbb P}} \newcommand{\RR}{\mathbb{R}}
\newcommand{\QQ}{\mathbb{Q}} \newcommand{\CC}{{\mathbb C}}
\newcommand{\ZZ}{{\mathbb Z}} \newcommand{\NN}{{\mathbb N}}
\newcommand{\cX}{\mathcal{X}}
\newcommand{\cI}{\mathcal{I}}
\newcommand{\cJ}{\mathcal{J}}
\newcommand{\cT}{\mathcal{T}}
\newcommand{\vol}{\operatorname{vol}}
\newcommand{\ddc}{\mathrm{dd}^\mathrm{c}}
\newcommand{\PSH}{\mathrm{PSH}}
\DeclareMathOperator{\supp}{supp}
\DeclareMathOperator{\MA}{MA}
\title{Transcendental Okounkov bodies}
\begin{document}

\author{T. Darvas, R. Reboulet, D. Witt Nystr\"om, M. Xia, K. Zhang \vspace{0.2cm}}


\date{\small{\emph{To the memory of Jean-Pierre Demailly}}}

\maketitle

\begin{abstract}
We show that the volume of transcendental big $(1,1)$-classes on compact K\"ahler manifolds can be realized by convex bodies, thus answering questions of Lazarsfeld--Musta\c{t}\u{a} and Deng. In our approach we use an approximation process by partial Okounkov bodies together with properties of the restricted volume, and we study the extension of K\"ahler currents, as well as the bimeromorphic behavior of currents with analytic singularities. We also establish a connection between transcendental Okounkov bodies and toric degenerations.
\end{abstract}

\tableofcontents

\section{Introduction}

In toric geometry, there is a striking correspondence between Delzant polytopes $\Delta$ and polarized toric manifolds $(X_{\Delta},L_{\Delta})$. Much of the geometry of the toric manifold is encoded in its Delzant polytope: for instance, the volume of $L_{\Delta}$, defined as 
\[
\vol(L_{\Delta})\coloneqq \lim_{k\to \infty}\frac{h^0(X_{\Delta},kL_{\Delta})}{k^n/n!},
\]
is seen to equal $n!$ times the Euclidean volume of $\Delta$. 

Toric manifolds are very special, due to their large group of symmetries. Thus, it was quite remarkable that Okounkov in the 90s \cite{Oko96} showed how one can naturally associate a convex body $\Delta(L)$ to any polarized manifold $(X,L)$. These bodies are now known as Okounkov bodies (or Newton--Okounkov bodies).

To construct the Okounkov body, one fixes a  complete flag of submanifolds $Y_{\bullet}=(X=Y_0\supset Y_1\supset \dots \supset Y_{n-1}\supset Y_n )$. To any $s\in H^0(X,kL)\setminus \{0\}$ we associate a valuation vector $\nu(s)\coloneqq (\nu_1,\dots,\nu_n)\in \RR^n$ in the following way. First, we let $\nu_1$ be the order of vanishing of $s$ along $Y_1$. Next, if $s_{Y_1}$ is a defining section for $Y_1$, we get that  $(s/s_{Y_1}^{\nu_1})|_{Y_1}$ is a non-zero section of $L^k \otimes \mathcal O(-\nu_1 Y_1)$ restricted to $Y_1$. We set $\nu_2$ to be the order of vanishing along $Y_2$ of this section. Continuing this process yields the full valuation vector $\nu(s)$. The Okounkov body of $L$, denoted $\Delta_{Y_{\bullet}}(L)$, is then defined as the closure of the set of rescaled valuation vectors 
\[
\Delta_{Y_\bullet}(L)\coloneqq \overline{\left\{\nu(s)/k: s\in H^0(X,kL)\setminus \{0\}, k\in \NN\right\}}.
\]
Convexity of the Okounkov body follows from the fact that $\nu(s \otimes t)=\nu(s)+\nu(t)$. A more difficult fact, however, is that the Okounkov body satisfies the volume identity: 
\begin{equation} \label{eqn:voleq1}
\vol_{\RR^n}(\Delta_{Y_\bullet}(L))=\frac{1}{n!}\vol(L).
\end{equation}

Later Lazarsfeld--Musta\c{t}\u{a} \cite{LM09} and Kaveh--Khovanskii \cite{KK12} showed that Okounkov's construction works for more general big line bundles as well, together with the analogous volume identity (\ref{eqn:voleq1}). One should note that the volume of big line bundles is a much more subtle notion compared to the ample case; using convex bodies to study this object was indeed a breakthrough. For example, Lazarsfeld--Musta\c{t}\u{a} used Okounkov bodies to show that the volume function is $C^1$ on the cone of big $\RR$-divisors \cite[Corollary C]{LM09}. Since then, numerous applications of Okounkov bodies have been found in algebraic and complex geometry (see e.g. \cite{BC11,BKMS15,LX18,BJ20,JL23}, to only name a few works).

The Okounkov body is a numerical invariant of the line bundle, i.e., it only depends on the first Chern class $c_1(L)$ of $L$, which is a real cohomology class of bidegree $(1,1)$.  
Real cohomology classes that do not correspond to line bundles are called transcendental.
The weaker positivity notions of being big or pseudoeffective have been extended to transcendental classes by Demailly, and Boucksom showed in \cite{Bou02b} how to define the volume $\vol(\xi)$ of a transcendental class $\xi$.

In light of the above, given a flag $Y_\bullet$, a very natural question  raised by Lazarsfeld--Musta\c{t}\u{a} \cite[p. 831]{LM09} asks whether one can define the Okounkov body $\Delta_{Y_\bullet}(\xi)$ associated with a transcendental class $\xi$, in such a way that it satisfies the volume identity
\begin{equation}
    \label{eq:vol-eq}
    \vol_{\RR^n}(\Delta_{Y_\bullet}(\xi))=\frac{1}{n!}\vol(\xi).
\end{equation} 

There are some notable open problems related to the volume of big transcendental classes, such as obtaining transcendental holomorphic Morse inequalities, or the continuous differentiability of the volume on the big cone \cite{BDPP13}. Identity \eqref{eq:vol-eq} could turn out to be useful with tackling these problems, the same way as it was in the line bundle case \cite[Corollary C]{LM09}. 

As we explain now, in \cite{Den17} Deng proposed a construction of Okounkov bodies $\Delta(\xi)$ for a transcendental class $\xi$. In the transcendental case global sections make no sense, but these classes admit a lot of currents. The vanishing order of a global section $s$ of a line bundle can be expressed using Lelong numbers of the current of integration $[s =0]$. Deng's construction rests on the idea that computing Lelong numbers of appropriate currents should be the transcendental analogue of computing vanishing order of sections in the algebraic case.

Let $(X,\omega)$ be a connected compact K\"ahler manifold of dimension $n$. Let $\xi\coloneqq \{\theta\}\in H^{1,1}(X,\RR)$ be a big (or pseudoeffective) cohomology class, with $\theta$ being a smooth closed $(1,1)$-form. Assume that $X$ has a complete flag of submanifolds 
\[
Y_\bullet\coloneqq  (X=Y_0\supset Y_1\supset \dots \supset Y_{n-1}\supset Y_n ),
\] 
 Let $\mathcal{A}(X,\xi)$ be the set of positive currents $\theta + \ddc u$ in $\xi$  with analytic singularities. This means that locally $u = c \log (\sum_j |f_j|^2) + h$, for $c \in \mathbb Q_+$, $f_j$ holomorphic, and $h$ bounded.

To any $T_0 \in\mathcal{A}(X,\xi) $ one associates a valuation vector $\nu(T_0) = (\nu_1, \ldots, \nu_n) \in \mathbb R^n$ in the following manner. Let $\nu_1$ equal $\nu(T_0,Y_1)$, the generic Lelong number of $T_0$ along $Y_1$. One can show that $T_1 \coloneqq (T_0 - \nu_1 [Y_1])|_{Y_1}$ has analytic singularities on $Y_1$. Then one sets $\nu_2 \coloneqq \nu(T_1, Y_2)$ and continues the above process until all components of $\nu(T_0)$ are defined. The Okounkov body of $\xi$, denoted $\Delta_{Y_{\bullet}}(\xi)$ (or just $\Delta(\xi)$, once the flag is fixed), is then defined as:
\[\Delta(\xi) \coloneqq  \overline{\nu(\mathcal{A}(X,\xi))} \subset \mathbb R^n.\]
As in the line bundle case, the convexity of the Okounkov body follows from the fact that $\nu(S+T)=\nu(S)+\nu(T)$.
When $\xi$ is a pseudoeffective class, one puts $
\Delta(\xi)=\cap_{\varepsilon>0}\Delta(\xi+\varepsilon\{\omega\})$. For more details, the reader is invited to consult \S~\ref{sec:construct-body}.

Deng's body coincides with Okounkov's when $\xi$ is the first Chern class of a big line bundle \cite{Den17}. Thus, this  body is a natural generalization of the Okounkov body to the transcendental setting, and we will work with Deng's definition in our paper.

It remained to show that the transcendental Okounkov body satisfies (\ref{eq:vol-eq}). Deng proved that this is true when the dimension of $X$ is at most two, but the general case was open until now.
Our main result proves the volume identity in full generality:

\begin{theorem}\label{thm:main}
Let $\xi$ be a big cohomology class on $X$ and $Y_\bullet$ be a flag on $X$. Then the associated Okounkov body $\Delta(\xi)$ of $\xi$ satisfies the volume equality
\begin{equation}\label{eq: main_volume_eq}
\vol_{\RR^n}(\Delta(\xi))=\frac{1}{n!}\vol(\xi).
\end{equation}
\end{theorem}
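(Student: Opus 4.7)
The plan is to adapt the Lazarsfeld--Musta\c{t}\u{a} slicing strategy to the transcendental setting, with currents in $\cA(X,\xi)$ playing the role of sections and Lelong numbers replacing vanishing orders. Writing $\Delta(\xi)_t \subset \RR^{n-1}$ for the slice of $\Delta(\xi)$ at first coordinate equal to $t$, the ultimate goal is an identification
\[
\vol_{\RR^{n-1}}(\Delta(\xi)_t) \;=\; \frac{1}{(n-1)!}\, \vol_{X|Y_1}\bigl(\xi - t\{Y_1\}\bigr),
\]
where the right-hand side is a transcendental restricted volume along $Y_1$. Combined with Fubini and a transcendental analogue of Boucksom's integration formula
\[
\vol(\xi) \;=\; n \int_0^{+\infty} \vol_{X|Y_1}\bigl(\xi - t\{Y_1\}\bigr)\, dt,
\]
this would yield Theorem~\ref{thm:main} by induction on $n$, with Deng's case $n \leq 2$ as a base.

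First I would reduce to a favorable geometric setup via a bimeromorphic modification $\pi:\tilde X \to X$, after which $Y_1$ may be assumed sufficiently positive, for instance a component of the support of an effective $\RR$-divisor in a current with analytic singularities realizing $\pi^{\ast}\xi$, so that $\pi^{\ast}\xi - t\{Y_1\}$ stays big on a suitable interval of $t$. The validity of this reduction rests on the bimeromorphic invariance of both $\vol(\xi)$ and $\Delta(\xi)$, which is precisely the bimeromorphic study of currents with analytic singularities advertised in the abstract. With this in place, one direction of the slice identification is the direct observation that if $T_0 \in \cA(X,\xi)$ satisfies $\nu(T_0,Y_1)=t$, then $T_1\coloneqq (T_0 - t[Y_1])|_{Y_1}$ belongs to $\cA(Y_1,(\xi - t\{Y_1\})|_{Y_1})$, so that the shortened valuation vector of $T_1$ contributes a point of $\Delta(\xi)_t$. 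The hard direction is the converse.

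The main technical obstacle is showing that enough currents with analytic singularities on $Y_1$ in the restricted class lift to currents in $\cA(X,\xi)$ with prescribed generic Lelong number $t$ along $Y_1$; this is the extension-of-K\"ahler-currents problem highlighted in the abstract. Rather than demanding exact extension, I would work with partial Okounkov bodies built from the subset of $\cA(X,\xi+\varepsilon\{\omega\})$ whose restrictions to $Y_1$ cover a given body on $Y_1$, then apply Ohsawa--Takegoshi-type extension for plurisubharmonic weights (using the slack $\varepsilon\{\omega\}$ of positivity to solve the extension problem), and finally let $\varepsilon\to 0$ using continuity of both $\vol$ and $\vol_{X|Y_1}$ on the big cone. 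Coupling the slice identification with the transcendental restricted-volume integration formula, itself established by approximation and the regularity properties of $\vol$ on the big cone, closes the argument and yields $\vol_{\RR^n}(\Delta(\xi))=\tfrac{1}{n!}\vol(\xi)$.
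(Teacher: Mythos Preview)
Your overall architecture matches the paper's: induction on $n$, identification of the slice $\Delta(\xi)_t$ with a restricted volume $\vol_{X|Y_1}(\xi-t\{Y_1\})$, and then Fubini combined with the integration formula $\vol(\xi)=n\int \vol_{X|Y_1}(\xi-t\{Y_1\})\,dt$. The paper formalizes this as three interlocking statements $A_n$, $B_n$, $C_n$ (Properties~\hyperref[conj:vol-equality]{$A_n$}, \hyperref[conj:relative-Oko-vol-equal]{$B_n$}, \hyperref[conj:restricted-vol-equality]{$C_n$}) and the integration formula is not a soft consequence of continuity of $\vol$ but a recent theorem of Witt Nystr\"om \cite{WN21} (Theorem~\ref{thm:WN-volume-identity} here), which you should cite rather than re-derive.

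There is, however, a genuine gap in your extension step. An Ohsawa--Takegoshi-type argument extends holomorphic sections with $L^2$ control, not qpsh potentials; even packaged through multiplier ideals and Demailly approximation, it does not produce on $X$ a K\"ahler current with \emph{analytic singularities} whose restriction to $Y_1$ has a prescribed valuation vector. But analytic singularities are exactly what you need, because the valuation map $\nu_{Y_\bullet}$ is only defined on $\mathcal{A}(X,\xi,q)$. The paper's substitute is Theorem~\ref{thm:CT-thm-refined} (= Theorem~\ref{thm:CT-thm-refined'}), a Richberg-type gluing that extends a K\"ahler current with gentle analytic singularities from $Y_1$ to $X$ while preserving analytic singularities; this is the technical heart of the ``hard direction'' of the slice identification (see Lemma~\ref{lma:A-n-1=>C-n}). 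Your $\varepsilon\{\omega\}$ slack does enter, but as a device to make the restricted class K\"ahler so that the Richberg extension applies, not to run $\bar\partial$-estimates.

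Your use of partial Okounkov bodies is also off from the paper's. The paper's partial bodies are $\Delta(\xi;R)=\overline{\nu(\{T\in\Omega(X,\xi):T\preceq R\})}$ for a fixed $R\in\Omega(X,\xi)$, and the key point (Lemma~\ref{lma:bim_inv_pob}, Lemma~\ref{lma:rel-vol-conj=vol-conj}) is that after a modification resolving $R$ this becomes a translate of the ordinary Okounkov body of a \emph{semipositive} class, so that $A_n\Leftrightarrow B_n$. This bimeromorphic reduction, together with the flag-lifting machinery of Section~4 (an $\mathrm{SL}_n(\mathbb Z)$ change of coordinates, Lemma~\ref{lma:flagvaluationchange}, and the existence of lifts, Theorem~\ref{thm:liftflaggeneral}), replaces your vaguer ``reduce to $Y_1$ sufficiently positive''. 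Without this precise mechanism, the equality of slice volume and restricted volume does not close, because currents on $Y_1$ in the restricted class need not come from $\Omega(X,\xi)$ directly; the paper routes through $\Omega$ on the modification and then pushes forward via Lemma~\ref{lem:push-forward-has-analy-sing}.
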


Passing to the limit, the volume identity can be extended to pseudoeffective classes as well:
\begin{corollary}\label{cor:main} The identity \eqref{eq: main_volume_eq}  holds even if $\xi$ is pseudoeffective.
\end{corollary}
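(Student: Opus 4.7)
The plan is to apply Theorem~\ref{thm:main} to each big class $\xi+\varepsilon\{\omega\}$ and then pass to the limit as $\varepsilon\to 0^+$, exploiting the definition $\Delta(\xi)=\bigcap_{\varepsilon>0}\Delta(\xi+\varepsilon\{\omega\})$. For each $\varepsilon>0$ the class $\xi+\varepsilon\{\omega\}$ is big (pseudoeffective plus K\"ahler), so Theorem~\ref{thm:main} yields
\begin{equation*}
\vol_{\RR^n}(\Delta(\xi+\varepsilon\{\omega\}))=\frac{1}{n!}\vol(\xi+\varepsilon\{\omega\}).
\end{equation*}
Moreover the right-hand side converges to $\frac{1}{n!}\vol(\xi)$ as $\varepsilon\to 0^+$, by (the definition of, or Boucksom's continuity for) the volume function on the pseudoeffective cone.

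The one substantive point to verify is monotonicity of the family $\{\Delta(\xi+\varepsilon\{\omega\})\}_{\varepsilon>0}$ in $\varepsilon$. Given $0<\varepsilon_1<\varepsilon_2$ and $T_0=\theta_1+\ddc u\in\mathcal{A}(X,\xi+\varepsilon_1\{\omega\})$ with $\theta_1$ a smooth representative, the current $T_0':=T_0+(\varepsilon_2-\varepsilon_1)\omega=(\theta_1+(\varepsilon_2-\varepsilon_1)\omega)+\ddc u$ is positive, lies in $\mathcal{A}(X,\xi+\varepsilon_2\{\omega\})$, and has the same local potential $u$ as $T_0$. Because a smooth semipositive form contributes zero Lelong number, iterating the $\nu$-construction and using that each intermediate restriction $T_k'|_{Y_{k+1}}$ differs from $T_k|_{Y_{k+1}}$ only by a smooth semipositive form gives $\nu(T_0')=\nu(T_0)$. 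Hence $\nu(\mathcal{A}(X,\xi+\varepsilon_1\{\omega\}))\subset\nu(\mathcal{A}(X,\xi+\varepsilon_2\{\omega\}))$, and taking closures yields $\Delta(\xi+\varepsilon_1\{\omega\})\subset\Delta(\xi+\varepsilon_2\{\omega\})$.

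With monotonicity in place, the family $\{\Delta(\xi+\varepsilon\{\omega\})\}_{\varepsilon>0}$ is a decreasing family of convex bodies contained in the bounded set $\Delta(\xi+\{\omega\})$, whose intersection is $\Delta(\xi)$. Dominated convergence applied to the indicator functions (which decrease pointwise to $\mathbf 1_{\Delta(\xi)}$) then gives $\vol_{\RR^n}(\Delta(\xi+\varepsilon\{\omega\}))\to\vol_{\RR^n}(\Delta(\xi))$, and combining with the previous display finishes the argument. I expect no serious obstacle: the continuity of the volume on the pseudoeffective cone is standard, the convergence of measures of a nested family of convex bodies is elementary, and the only real content is the monotonicity step, whose key input is simply that adding a K\"ahler form does not alter Lelong numbers.
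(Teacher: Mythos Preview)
Your proposal is correct and follows essentially the same approach as the paper: apply Theorem~\ref{thm:main} to the big classes $\xi+\varepsilon\{\omega\}$, use the definition $\Delta(\xi)=\bigcap_{\varepsilon>0}\Delta(\xi+\varepsilon\{\omega\})$, and invoke continuity of the volume on the pseudoeffective cone. The paper is slightly more economical in that it observes $\vol(\xi)=0$ in the non-big case and only needs the trivial inequality $\vol_{\RR^n}(\Delta(\xi))\leq \vol_{\RR^n}(\Delta(\xi+\varepsilon\{\omega\}))$ (immediate from the definition as an intersection) rather than your full dominated-convergence argument; your monotonicity step is also already recorded in the paper (Proposition~\ref{prop:body-properties}).
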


Theorem~\ref{thm:main} and Corollary~\ref{cor:main} confirm \cite[Conjecture~1.4]{Den17} and also give an affirmative answer to the question of Lazarsfeld--Musta\c{t}\u{a} \cite[p. 831]{LM09}.

We note that, \emph{a priori} $X$ might not contain any proper complex submanifolds of positive dimension, let alone flags. For example, this happens if $X$ is a non-algebraic simple compact torus. However, one can always blow up $X$ at a point $p$. Then one can construct a flag $Y_\bullet$ on $\mathrm{Bl}_pX$ with $Y_1\cong\mathbb{CP}^{n-1}$ being the exceptional divisor and $Y_2\supset \dots\supset Y_n$ being a flag of linear subspaces in $Y_1$. Therefore, up to modifying $X$ and pulling back $\xi$, our main result is always applicable.

The proof of Theorem~\ref{thm:main} proceeds by induction on $\dim X$, with the recent work of the third named author \cite{WN21} helping with the induction step. In our proof, it will be necessary to approximate the volume of $\Delta(\xi)$ using the volume of partial Okounkov bodies, as studied in \cite{Xia21}. The advantage of our partial Okounkov bodies is that they have good bimeromorphic properties, allowing to reduce their study to the case when $\xi$ is K\"ahler. We will also study  currents with analytic singularities whose pushforward also have this property. This contrasts Deng's approach in the surface case \cite{Den17}, where he can apply the Zariski decomposition, without modifying the surface (cf. \cite{Bou04}).

In our approach, an extension result slightly generalizing \cite[Theorem 1.1]{CT14} will be necessary to relate the slice volume of the Okounkov body to the restricted volume of the big class (see Property~\hyperref[conj:restricted-vol-equality]{$C_n$}). Let us state this extension result,  which could be of independent interest, in light of  \cite[Conjecture 37]{DGZ16} and \cite{CGZ13}.

\begin{theorem}
\label{thm:CT-thm-refined} Let $V\subseteq X$ be a connected positive dimensional compact complex submanifold of $(X,\omega)$, and $T=\omega|_V+\ddc\varphi$ be a K\"ahler current on $V$. Assume that $\mathrm{e}^{\varphi}$ is a H\"older continuous function on $V$. Then one can find a K\"ahler current $\tilde T=\omega+\ddc\tilde \varphi$ on $X$ such that $\tilde \varphi|_V=\varphi$, i.e., $\tilde T$ extends $T$. Moreover, $\tilde\varphi$ is continuous on $X\setminus V$ and $\mathrm{e}^{\tilde \varphi}$ is H\"older continuous on $X$. In addition, if $\varphi$ has analytic singularities, then so does $\tilde\varphi$.
\end{theorem}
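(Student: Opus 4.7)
The plan is to refine the Collins--Tosatti extension \cite[Theorem 1.1]{CT14} so that the H\"older regularity of $\mathrm{e}^{\varphi}$ on $V$ is carried over to the ambient $X$, together with continuity off $V$ and preservation of analytic singularities. The argument proceeds in three steps: a local extension across $V$, a gluing into a tubular neighborhood, and a final regularized-maximum gluing with a global extension supplied by \cite{CT14}.

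\emph{Local extension.} Cover $V$ by finitely many coordinate charts $U_j\subset X$ with holomorphic coordinates $(z,w)$ such that $V\cap U_j=\{w=0\}$, and let $\pi_j(z,w):=(z,0)$ be the coordinate retraction. On each $U_j$ set
\[
\hat\varphi_j(z,w):=\pi_j^{*}\varphi(z,w)+A|w|^{2}.
\]
Since $T$ is a K\"ahler current on $V$, namely $\omega|_{V}+\ddc\varphi\geq\varepsilon\,\omega|_{V}$, choosing $A>0$ large enough forces $\omega+\ddc\hat\varphi_j\geq\varepsilon'\omega$ on $U_j$. Then $\hat\varphi_j|_{V}=\varphi$, and $\mathrm{e}^{\hat\varphi_j}=\mathrm{e}^{\varphi\circ\pi_j}\cdot\mathrm{e}^{A|w|^{2}}$ is H\"older on $U_j$ as the product of the composition of a H\"older function with a smooth map by a smooth factor; analytic singularities are visibly preserved because each local generator of the singular ideal pulls back via $\pi_j$ to a holomorphic function on $U_j$. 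A partition-of-unity combined with Demailly's regularized-maximum construction then glues the $\hat\varphi_j$ (after shifting by smooth corrections measuring the discrepancy between the retractions on overlaps) into a single $\omega$-psh function $\hat\varphi$ on a tubular neighborhood $\cW\supset V$ with $\hat\varphi|_{V}=\varphi$, $\omega+\ddc\hat\varphi\geq\varepsilon''\omega$, $\mathrm{e}^{\hat\varphi}$ H\"older on $\cW$, $\hat\varphi$ continuous on $\cW\setminus V$, and analytic singularities preserved.

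\emph{Global gluing.} Apply \cite[Theorem 1.1]{CT14} to produce a K\"ahler current $\omega+\ddc\psi$ on $X$ with $\psi|_{V}=\varphi$, arranged to have analytic singularities whenever $\varphi$ does. Choose a constant $C$ such that $\psi-C<\hat\varphi-1$ on a smaller neighborhood $\cW'\Subset\cW$, while $\psi-C>\hat\varphi+1$ near $\partial\cW$; this is possible since $\hat\varphi$ is locally bounded below on $\cW\setminus V$ and $\psi$ is bounded above. Define
\[
\tilde\varphi:=\max\nolimits_{\eta}(\hat\varphi,\psi-C)\text{ on }\cW,\qquad \tilde\varphi:=\psi-C\text{ on }X\setminus\cW,
\]
with $\eta>0$ so small that the two formulas agree on an annular neighborhood of $\partial\cW$. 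Then $\tilde\varphi=\hat\varphi$ on $\cW'$, giving $\tilde\varphi|_{V}=\varphi$, while $\omega+\ddc\tilde\varphi$ is a K\"ahler current globally on $X$.

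The hardest step is the global H\"older regularity of $\mathrm{e}^{\tilde\varphi}$. Near $V$ this follows from the local construction; away from $V$ it requires the extension $\psi$ from \cite{CT14} to also satisfy H\"older regularity of $\mathrm{e}^{\psi}$ on $X\setminus V$. This can be arranged by replacing $\psi$ by a Demailly analytic approximation $\psi_m$ at scale $1/m$, which loses a factor $1-O(1/m)$ from the K\"ahler-current constant (absorbable into $\varepsilon$) while rendering $\mathrm{e}^{\psi_m}$ H\"older on $X\setminus V$. Since the regularized maximum is Lipschitz in its arguments, $\mathrm{e}^{\tilde\varphi}$ inherits H\"older regularity on all of $X$; continuity on $X\setminus V$ follows similarly. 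Preservation of analytic singularities is immediate because $\tilde\varphi=\hat\varphi$ on $\cW'$ and, in the remaining regions, $\tilde\varphi$ locally coincides with the dominant one of two functions with analytic singularities, the narrow regularized-maximum transition zone being avoided once $\eta$ is chosen smaller than the separation between the two potentials off $V$.
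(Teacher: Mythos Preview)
The central gap is in your ``local extension'' paragraph, where you assert that the $\hat\varphi_j$ can be glued on overlaps ``after shifting by smooth corrections measuring the discrepancy between the retractions''. This does not work: on $U_i\cap U_j$ the difference $\hat\varphi_i-\hat\varphi_j=\varphi\circ\pi_i-\varphi\circ\pi_j+\text{(smooth)}$, and since $\varphi$ is allowed to tend to $-\infty$ on $V$, the term $\varphi\circ\pi_i-\varphi\circ\pi_j$ is in general unbounded (one projection can land in the pole set of $\varphi$ while the other does not). No smooth or bounded correction can absorb this, and Richberg-type max gluing requires exactly such a comparison near the boundary of each chart. The paper's proof confronts this issue head-on: it replaces $\varphi\circ\pi_j$ by $\log(e^{\varphi\circ\pi_j}+e^{F})$ where $F$ has analytic singularities along $V$ with $e^F\sim\mathrm{dist}(\cdot,V)^c$ for some $c$ strictly smaller than the H\"older exponent $\alpha$ of $e^\varphi$. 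The key Lemma~\ref{lem:phi>phi-eta} then shows that the discrepancy $e^{\varphi\circ\pi_i}-e^{\varphi\circ\pi_j}$, being $\alpha$-H\"older and vanishing on $V$, is dominated by $e^F$; this is precisely where the H\"older hypothesis is used, not merely as a regularity to be propagated but as the mechanism that makes the gluing possible. The remark following Lemma~\ref{lem:phi>phi-eta} gives an explicit counterexample showing the comparison fails without continuity of $e^\varphi$.

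There are secondary issues as well. In your global gluing, you ask for $\psi-C<\hat\varphi-1$ near $V$ and $\psi-C>\hat\varphi+1$ near $\partial\cW$, but increasing $C$ only pushes $\psi-C$ downward everywhere, so the second inequality cannot be arranged this way; the paper instead glues with $F+k$, exploiting that $F\to-\infty$ along $V$ while $F$ is bounded away from $V$. Invoking \cite{CT14} as a black box is also somewhat circular, since the goal is to refine that very theorem. Finally, your claim that the regularized-max transition zone ``is avoided'' is not justified: where the two potentials cross, $\max_\eta$ genuinely mixes them, and a regularized maximum of two functions with analytic singularities need not have analytic singularities there. The paper uses the ordinary $\max$ together with Lemma~\ref{lem: elem_log_ineq}, which does preserve analytic singularities.
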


For the definition of analytic singularities we refer to Definition~\ref{def-loc-analy-sing}. Our definition is more general compared to \cite{CT14}; see  Remark~\ref{rem: smooth_remainder} for a comparison of various notions of analytic singularities appearing in the literature.

Potentials with exponential H\"older continuity often appear in the literature (see \cite{BFJ08, RW17}), so this result may find other applications in K\"ahler geometry.
Similar to \cite{CT14}, we prove our extension result using Richberg's gluing technique. We will adapt the exposition from \cite[Section I.5.E]{Dem12b}, which allows for some simplifications.

Note that one can not expect Theorem~\ref{thm:CT-thm-refined} to hold for big classes, due to the obvious obstruction coming from the non-K\"ahler locus. In fact one can hope to extend a K\"ahler current in a big class only if it has appropriate prescribed singularities along the non-K\"ahler locus. This technical point will arise in our proof of Theorem~\ref{thm:main}, and will be circumvented with the help of partial Okounkov bodies. \smallskip

In Section~\ref{sect_moment1}, we describe an alternative way of defining a convex body associated with a big class $\xi$. Instead of using the valuation vectors $\nu(T)$ of closed positive currents $T\in \xi$, we consider certain iterated toric degenerations of $X$, where in $n$ steps $(X,T)$ is degenerated to $(\PP^n,T_n)$. Here, $T$ is a closed positive $(1,1)$-current in $\xi$, while $T_n$ is a closed positive and \emph{toric} $(1,1)$-current on $\PP^n$. As described in Section~\ref{sect_moment1}, any such toric degeneration $(\PP^n,T_n)$ comes with a naturally associated moment body $\Delta(T_n)$. This allows us to define the moment body $\Delta^{\mu}(\xi)$ of $\xi$ as the closure of the union of the moment bodies of all such metrized toric degenerations of $(X,\xi)$. We note that an algebraic version of this construction is considered in the recent thesis of Murata \cite{Mur20}. We show that this alternative body agrees with the  Okounkov body of $\xi$:
\begin{theorem}\label{thm_momentbody} For $\xi$ big
the moment body $\Delta^\mu(\xi)$ coincides with the Okounkov body $\Delta(\xi)$.
\end{theorem}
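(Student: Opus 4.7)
The plan is to prove both inclusions $\Delta(\xi)\subseteq\Delta^\mu(\xi)$ and $\Delta^\mu(\xi)\subseteq\Delta(\xi)$ separately, in each direction translating between the flag-based valuation data on $X$ and the toric moment data on $\mathbb{P}^n$ via the $n$-step iterated degeneration.

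For $\Delta(\xi)\subseteq\Delta^\mu(\xi)$, I start with $T\in\mathcal{A}(X,\xi)$ and its valuation vector $\nu(T)=(\nu_1,\dots,\nu_n)$, and build a toric degeneration adapted to the flag. The first step is the deformation to the normal cone of $Y_1\subset X$, producing a $\mathbb{C}^*$-family whose central fiber contains the projectivized normal bundle $\mathbb{P}(N_{Y_1/X}\oplus\mathcal{O})$; taking a flat limit of $T$ along this family yields a current whose Lelong number along the infinity section is exactly $\nu_1$. Continuing inductively on $Y_1$ with the residual current $T_1=(T-\nu_1[Y_1])|_{Y_1}$ and valuation vector $(\nu_2,\dots,\nu_n)$, and collating the successive one-parameter degenerations into a single $(\mathbb{C}^*)^n$-equivariant picture, produces in $n$ steps a toric current $T_n$ on $\mathbb{P}^n$ in whose moment polytope $\nu(T)$ sits by construction, so $\nu(T)\in\Delta^\mu(\xi)$.

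For the reverse inclusion, fix any iterated toric degeneration producing $(\mathbb{P}^n,T_n)$ from $(X,T)$. Since $T_n$ is a closed positive toric $(1,1)$-current on $\mathbb{P}^n$, classical toric theory identifies its moment polytope $\Delta(T_n)$ with the transcendental Okounkov body of the class $\{T_n\}$ computed relative to the standard toric flag of linear subspaces in $\mathbb{P}^n$. In particular, every rational $\alpha\in\Delta(T_n)$ is realized as $\nu_{\mathrm{std}}(S_\alpha)$ for some $(\mathbb{C}^*)^n$-invariant current $S_\alpha\in\mathcal{A}(\mathbb{P}^n,\{T_n\})$. Transporting $S_\alpha$ backward through the $n$ degeneration steps in a positivity-preserving way yields a current $\tilde S_\alpha\in\mathcal{A}(X,\xi)$ with $\nu_{Y_\bullet}(\tilde S_\alpha)=\alpha$, because each step was arranged so that the $i$-th moment coordinate in the toric fiber reads off the Lelong number along $Y_i$ of the corresponding residual current. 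By density of rationals and closure, $\Delta(T_n)\subseteq\Delta(\xi)$, and passing to the union over all toric degenerations gives $\Delta^\mu(\xi)\subseteq\Delta(\xi)$.

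The main technical hurdle is preserving positivity through the iterated deformation: at each stage one must extend a K\"ahler current with analytic singularities from the strict transform of $Y_i$ to the total space of the deformation family without losing the singularity type, which is precisely the scenario addressed by Theorem~\ref{thm:CT-thm-refined}. A secondary subtlety is checking that the toric limit $T_n$ lies in a cohomology class that depends only on $\xi$ (and on the combinatorics of the flag), so that $\Delta^\mu(\xi)$ is genuinely a function of $\xi$ rather than of the representative $T$ chosen.
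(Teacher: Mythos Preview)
Your forward inclusion $\Delta(\xi)\subseteq\Delta^\mu(\xi)$ is essentially the paper's argument: starting from $T_0\in\Omega(X,\xi)$ one builds an explicit toric degeneration whose terminal current $T_n$ has moment body the single point $\nu(T_0)$.

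The reverse inclusion, however, has a genuine gap. Your plan is to take a toric current $S_\alpha$ on $\mathbb{P}^n$ and ``transport it backward'' through the $n$ degeneration steps to obtain a current on $X$ with prescribed valuation vector. There is no mechanism for this. Each step is a family $\mathcal{X}^i\to\mathbb{P}^1$ with generic fiber $X^{i-1}$ and special fiber containing $X^i$; currents on the special fiber do not naturally deform to currents on the generic fiber, and Theorem~\ref{thm:CT-thm-refined} does not help here, since it extends currents from a submanifold to an ambient space, not across the fibers of a degeneration. Moreover, your identification of the moment body $\Delta(T_n)$ with the Okounkov body of the \emph{class} $\{T_n\}$ is incorrect: $\Delta(T_n)$ depends on the singularity type of $T_n$ and is typically strictly smaller than $\Delta(\{T_n\})$, so even on $\mathbb{P}^n$ you cannot freely pick $S_\alpha$ realizing an arbitrary $\alpha\in\Delta(T_n)$ within the class.

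The paper bypasses this entirely. It never proves $\Delta^\mu(\xi)\subseteq\Delta(\xi)$ directly. Instead it establishes a volume upper bound $\vol_{\RR^n}(\Delta^\mu(\xi))\leq\frac{1}{n!}\vol(\xi)$ by a semicontinuity argument along each degeneration step (Fatou's lemma applied to the Monge--Amp\`ere masses of the fibers). Combined with the forward inclusion and the main volume identity $\vol_{\RR^n}(\Delta(\xi))=\frac{1}{n!}\vol(\xi)$ (Theorem~\ref{thm:main}), one gets that $\Delta(\xi)\subseteq\Delta^\mu(\xi)$ are convex bodies of equal positive volume, hence equal. So the reverse inclusion is a \emph{consequence} of the main theorem of the paper, not an independent ingredient.
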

As a direct consequence of Theorem~\ref{thm_momentbody} we get that $\Delta(T_n)\subseteq \Delta(\xi)$ for all toric degenerations $(\PP^n,T_n)$ of $(X,\xi)$, and that for any $\epsilon>0$ we can find a toric degeneration $(\PP^n,T_n)$ of $(X,\xi)$ such that the Hausdorff distance between $\Delta(T_n)$ and $\Delta(\xi)$ is less than $\epsilon$. This provides a strong link between transcendental Okounkov bodies and toric degenerations, which we think will be useful in further investigations.

The above result is inspired by the works of Harada--Kaveh \cite{HK15} and Kaveh \cite{Kav19}, who, building on earlier work of Anderson \cite{And13}, prove related results in the algebraic setting. This theorem is also related to work on canonical growth conditions \cite{WN18}.

\paragraph{Further directions} Finally, we mention several appealing directions that one could explore further. One may try to define the Chebyshev transform of \cite{WN14} in the transcendental setting to study weak geodesic rays in connection with transcendental notions of  K-stability attached to cscK metrics \cite{DR17,SD18}.
One could try to use Okounkov bodies to make progress on the conjectured transcendental Morse inequality \cite{BDPP13}, or to show differentiability of the volume in the big cone, akin to \cite[Corollary C]{LM09}. Moreover, one can study the mixed volume of Okounkov bodies and investigate its relation with the movable intersection product of big classes introduced in \cite{Bou02a}.

Regarding the extension of K\"ahler currents, it is intriguing to understand how far the Richberg type gluing techniques can be pushed. 

The results of this paper seem to naturally extend to manifolds of the Fujiki class~$\mathcal{C}$. Going beyond, we ask if one can attach Okounkov bodies to transcendental pseudoeffective cohomology classes $\xi$ on general compact complex manifolds.

\textbf{Acknowledgements.}We thank S\'ebastien Boucksom for suggesting Lemma~\ref{lem:push-forward-has-analy-sing}, simplifying our original argument, and for helpful discussions regarding analytic singularities. 
We also thank Zhiwei Wang for discussions on the extension of K\"ahler currents. We also thank the anonymous referees for suggestions on how to improve the presentation.

The first named author is partially supported by an Alfred P.
Sloan Fellowship and National Science Foundation grant DMS–2405274. The second
named author is supported by the Knut och Alice Wallenbergs Stiftelse. The third named author is partially supported by a grant from the Swedish Research Council and a grant from the Göran Gustafsson Foundation for Research in Natural Sciences and Medicine.
The fourth
named author is supported by Knut och Alice Wallenbergs Stiftelse grant KAW 2024.0273.
The fifth named author is supported by NSFC grants 12101052, 12271040, and 12271038.

\section{Preliminaries}

Let $(X,\omega)$ be an $n$-dimensional connected compact K\"ahler manifold. Let $\xi\coloneqq \{\theta\}\in H^{1,1}(X,\RR)$ be a big cohomology class. We write $\mathcal{Z}_+(X)$ for the set of closed positive $(1,1)$-currents on $X$ and $\mathcal{Z}_+(X,\xi)$ for the set of $T\in \mathcal{Z}_+(X)$ representing $\xi$. A function $\varphi:X\rightarrow [-\infty,\infty)$ is called quasi-plurisubharmonic (qpsh) if locally $\varphi=\phi+\psi$, where $\phi$ is smooth and $\psi$ is plurisubharmonic.
Put
\begin{align*}
\PSH(X,\theta)\coloneqq \{&\varphi:X\rightarrow [-\infty,\infty): \\
&\varphi \text{ is a qpsh function on $X$ such that }\theta+\ddc\varphi\geq 0 \},
\end{align*}
where $\ddc\coloneqq \sqrt{-1}\partial\bar\partial/2\pi$. Given a qpsh function $\varphi$, 
 the Lelong number of $\varphi$ at $x\in X$ is 
\[
\nu(\varphi,x)\coloneqq \sup\{c\geq0:\varphi(z)\leq c\log|z-x|^2+O(1) \text{ near }x\}.
\]
Given an irreducible analytic subset $E\subset X$, the generic Lelong number of $\varphi$ along $E$ is
\[
\nu(\varphi,E)\coloneqq \inf_{x\in E}\nu(\varphi,x).
\]
If $T=\theta+\ddc\varphi$ for $\varphi\in\PSH(X,\theta)$, then set $\nu(T,x)\coloneqq \nu(\varphi,x)$ and $\nu(T,E)\coloneqq \nu(\varphi,E)$.

If $\phi,\psi$ are qpsh, we say that $\phi$ is more singular than $\psi$ and write $\phi\preceq \psi$ if $\phi\leq \psi+C$ for some constant $C$. If $\phi\preceq \psi$ and $\psi\preceq \phi$, we then write $\phi\sim \psi$ and say that $\phi$ and $\psi$ have the same singularity type. This defines an equivalence relation.

Given $T,S\in \mathcal{Z}_+(X)$, with $T=\theta_1+\ddc\phi$ and $S=\theta_2+\ddc\psi$, $\theta_1,\theta_2$ being smooth, then we say that $T$ is more singular than $S$ if $\phi$ is more singular than $\psi$, and write $T\preceq S$. If $\phi\sim \psi$ we also say that $T$ and $S$ have the same singularity type and write $T\sim S$. 

\begin{lemma}\label{lma:Zariskimain}
    Let $\pi:Z\rightarrow X$ be a proper bimeromorphic morphism with $Z$ being a K\"ahler manifold. Then $\pi^*$ and $\pi_*$ induce a bijection between $\mathcal{Z}_+(X,\xi)$ and $\mathcal{Z}_+(Z,\pi^*\xi)$.
\end{lemma}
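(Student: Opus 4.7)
The plan is to define $\pi^*$ on $\mathcal{Z}_+(X,\xi)$ via potentials and $\pi_*$ on $\mathcal{Z}_+(Z,\pi^*\xi)$ via the usual pushforward of currents, then verify that both maps land in the correct target and are mutual inverses. For $T=\theta+\ddc\varphi$ with $\varphi\in\PSH(X,\theta)$, I set $\pi^*T:=\pi^*\theta+\ddc(\varphi\circ\pi)$: since $\pi$ is a holomorphic surjection, $\varphi\circ\pi\in\PSH(Z,\pi^*\theta)$ and $\pi^*T\geq 0$ as currents, so $\pi^*T\in\mathcal{Z}_+(Z,\pi^*\xi)$. For $S\in\mathcal{Z}_+(Z,\pi^*\xi)$, the pushforward $\pi_*S$ is a closed positive $(1,1)$-current on $X$ (positivity being preserved because $\pi$ is a proper holomorphic map between equidimensional complex manifolds), with cohomology class $\pi_*\pi^*\xi=\xi$ by the projection formula, so $\pi_*S\in\mathcal{Z}_+(X,\xi)$.

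The identity $\pi_*\pi^*T=T$ is the easy direction: pairing against any test form on $X$ and using change of variables, one gets $\pi_*(\varphi\circ\pi)=\varphi$ as distributions (since the exceptional locus has measure zero) and $\pi_*\pi^*\theta=\theta$ as currents. The reverse identity $\pi^*\pi_*S=S$ is the delicate point. Write $S=\pi^*\theta+\ddc\psi$ with $\psi\in\PSH(Z,\pi^*\theta)$. Applying the $\ddc$-lemma on the compact K\"ahler manifold $X$ to the $\ddc$-exact current $\pi_*S-\theta$, I obtain $\pi_*S=\theta+\ddc\chi$ for some distribution $\chi$; the local description of closed positive $(1,1)$-currents (locally $\pi_*S=\ddc g$ with $g$ psh) upgrades $\chi$ to a function in $\PSH(X,\theta)$. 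Then $\pi^*\pi_*S=\pi^*\theta+\ddc(\chi\circ\pi)$, and the problem reduces to showing $\chi\circ\pi=\psi$ on $Z$. On the preimage of the isomorphism locus $U\subset X$ this holds by construction, since there $\chi=\pi_*\psi=\psi\circ\pi^{-1}$. As the complement of $\pi^{-1}(U)$ in $Z$ is a proper analytic subset, and two $\pi^*\theta$-psh functions that agree almost everywhere on $Z$ must agree pointwise by upper semicontinuity, the identity extends globally.

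The main obstacle is the last step: upgrading the potential $\chi$ produced by the $\ddc$-lemma from a mere distribution to a $\theta$-psh function on all of $X$, and then pushing the equality $\chi\circ\pi=\psi$ across the exceptional locus. Both rely on the principle that qpsh functions on a connected K\"ahler manifold are determined by their values off a pluripolar set, which converts the obvious bijection on the isomorphism locus into a bijection between the full current spaces.
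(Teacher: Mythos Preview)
Your argument is correct and lands on the same mechanism as the paper: establish the bijection on the isomorphism locus and then extend across the exceptional set using that qpsh functions agreeing off a pluripolar (here, analytic) set agree everywhere. The paper, however, works more directly at the level of potentials: given $\psi\in\PSH(Z,\pi^*\theta)$ it sets $w:=\psi\circ\pi^{-1}$ on the isomorphism locus $U$, observes $w$ is bounded above, and invokes the Riemann-type extension of bounded-above psh functions across analytic sets to obtain $w\in\PSH(X,\theta)$ with $\pi^*w=\psi$. Your route via pushforward of currents and the $\ddc$-lemma is a legitimate alternative, but the $\ddc$-lemma step is avoidable (you can take $\chi=\pi_*\psi$ directly, since $\pi_*$ commutes with $\ddc$ and $\pi_*\pi^*\theta=\theta$), and your claim ``$\chi\circ\pi=\psi$'' should strictly read ``up to an additive constant'' because the $\ddc$-lemma only determines $\chi$ modulo constants---harmless for the conclusion $\pi^*\pi_*S=S$, but worth tightening. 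In short: same idea, the paper's version is a bit leaner.
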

\begin{proof}
One can show that the pullback map $\pi^*$ is an isomorphism between $\PSH(X,\theta)$ and $\PSH(Z, \pi^*\theta)$, where $\theta$ is a smooth closed $(1,1)$-form representing $\xi$.

It is clear that the pullback map is injective. To show surjectivity, let $v \in \PSH(Z,\pi^* \theta)$. Let $U \subset X$ be the Zariski open set such that $\pi: \pi^{-1}(U) \to U$ is biholomorphic. Then $w \coloneqq v \circ \pi^{-1} \in \PSH(U,\theta)$, moreover $w$ is bounded from above. Since $X \setminus U$ is an analytic set, it follows that $w$ extends to an element of  $\PSH(X,\theta)$ \cite{Dem12b}. Since $\pi^*w = v$ away from an analytic set, it follows that $\pi^*w = v$ \cite{Dem12b}. 
\end{proof}

For a positive current $R\in \mathcal{Z}_+(X,\xi)$, let
$E_+(R)$ denote the locus where $R$ has positive Lelong number. The \emph{non-K\"ahler locus} $E_{nK}(\xi)$ and \emph{K\"ahler locus} $K(\xi)$ are  defined by
\[
E_{nK}(\xi)\coloneqq \bigcap_{\text{K\"ahler current }R\in \mathcal{Z}_+(X,\xi)}E_+(R), \quad
K(\xi)\coloneqq X\setminus E_{nK}(\xi).
\]
The class $\xi$ is K\"ahler if and only if $E_{nK}(\xi)=\varnothing$.

\begin{definition}\label{def-loc-analy-sing}
Let $q\in X$ be a point. A qpsh function $\varphi$ on $X$ is said to have \emph{(local) analytic singularities} near $q$ if there exists an open set $U$ containing $q$ such that
\[
\varphi=c\log\sum_i |f_i|^2+h,
\]
where $f_i\in\mathcal O_X(U)$ is a finite set of holomorphic functions, $c\in\QQ_{\geq 0}$, and $h$ is bounded.

If $\mathcal{I}$ is a coherent ideal sheaf on $X$ and $c\in \mathbb{Q}_{>0}$, we say that a qpsh function $\varphi$  on $X$ has \emph{analytic singularities (of type $(\mathcal{I},c)$)} if for any $q\in X$, we can locally write $\varphi$ as
\begin{equation*}
\varphi=c\log \sum_i |f_i|^2+h,
\end{equation*}
where $f_1,\ldots,f_N$ is a finite collection of local generators of $\mathcal{I}$, and $h$ is bounded.
\end{definition}

Similarly, we say a positive current $T\eqqcolon \theta+\ddc\varphi\in \mathcal{Z}_+(X,\xi)$ has analytic singularities near $q$ (resp. has analytic singularities (of type $(\mathcal{I},c)$)) if so does $\varphi$. We set
\[
\begin{aligned}
    \mathcal{A}(X,\xi,q)\coloneqq &  \left\{ T\in \mathcal{Z}_+(X,\xi): T \text{ has analytic singularities near $q$}\right\},\\
    \mathcal{A}(X,\xi)\coloneqq & \left\{ T\in \mathcal{Z}_+(X,\xi): T \text{ has analytic singularities}\right\},\\
    \mathcal{A}(X)\coloneqq &  \left\{ T\in \mathcal{Z}_+(X): T \text{ has analytic singularities}\right\}.
\end{aligned}
\]

\begin{remark}
\label{rmk:local-to-global-analy-sing-type} If $\varphi$ has local analytic singularities near every point of $X$, then there is a coherent ideal sheaf $\mathcal{I}$ and $c\in \mathbb{Q}_{>0}$ such that $\varphi$ has analytic singularities of type $(\mathcal{I},c)$.

    To see this, take a finite covering $\{U_j\}_j$ of $X$ such that $\varphi$ can be written as 
    \[
    c_j \log |F_j|^2+O(1),
    \]
    where $F_j=(f^j_{1},\ldots,f^j_{n_j})$ is a finite tuple of holomorphic functions on $U_j$. In the above representation we can always divide $c_j$ by an arbitrary $N\in \mathbb{Z}_{>0}$, by adjusting the $F_j$. Since $X$ is compact, we can guarantee that $c_j$ does not depend on $j$. We denote this common value by $c$.

    Now locally on $U_j$, the ideal sheaf $\mathcal{I}$ can be constructed as the integral closure of the ideal generated by $f^j_{1},\ldots,f^j_{n_j}$. As a consequence of Brian\c{c}on--Skoda theorem (see \cite[Corollary 11.18]{Dem15}), this gives a well-defined coherent ideal sheaf on $X$. But note that $T$ does not uniquely determine the pair $(\mathcal{I},c)$.
\end{remark}

Due to the above remark, when talking about currents $T$ with analytic singularities, we will often not specify the type $(\mathcal I,c)$.

We continue with a  well known observation, that follows from the short argument of \cite[Proposition~4.1.8]{Dem15}:

\begin{lemma}\label{lem: elem_log_ineq} If $u,v$ are qpsh functions with analytic singularities, then $\max(u,v)$ also has analytic singularities.
\end{lemma}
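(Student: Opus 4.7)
The plan is purely local: by Remark~\ref{rmk:local-to-global-analy-sing-type}, it suffices to prove that $\max(u,v)$ has local analytic singularities near every point $q\in X$. Since $\max$ of qpsh functions is again qpsh, this will complete the proof.

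Fix $q\in X$ and a neighborhood $U\ni q$ on which $u=c_u\log\sum_i|f_i|^2+h_u$ and $v=c_v\log\sum_j|g_j|^2+h_v$ with $c_u,c_v\in\QQ_{\geq 0}$, holomorphic $f_i,g_j\in\mathcal O_X(U)$, and bounded $h_u,h_v$. The first step is a common-denominator reduction: choose $N\in\ZZ_{>0}$ with $Nc_u,Nc_v\in\ZZ_{\geq 0}$, and expand $(\sum_i|f_i|^2)^{Nc_u}=\sum_\alpha|F_\alpha|^2$ and $(\sum_j|g_j|^2)^{Nc_v}=\sum_\beta|G_\beta|^2$ via the multinomial theorem, absorbing multinomial coefficients into the holomorphic monomials $F_\alpha,G_\beta$. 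After this rewriting, $u=\tfrac{1}{N}\log\sum_\alpha|F_\alpha|^2+h_u$ and $v=\tfrac{1}{N}\log\sum_\beta|G_\beta|^2+h_v$, both with the common coefficient $1/N$.

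The second step uses the trivial estimate $\max(A,B)\leq A+B\leq 2\max(A,B)$ for $A,B>0$. Applying it to $A=e^{Nh_u}\sum_\alpha|F_\alpha|^2$ and $B=e^{Nh_v}\sum_\beta|G_\beta|^2$, taking logarithms, and using that $e^{Nh_u}$ and $e^{Nh_v}$ are bounded above and below by positive constants on $U$, one obtains
\[
\max(u,v)=\tfrac{1}{N}\log\Bigl(\sum_\alpha|F_\alpha|^2+\sum_\beta|G_\beta|^2\Bigr)+O(1) \quad\text{on } U,
\]
which is exactly the local analytic singularities form of Definition~\ref{def-loc-analy-sing} at $q$ (with generators indexed by the disjoint union of the $F_\alpha$ and $G_\beta$). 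Since $q$ was arbitrary, Remark~\ref{rmk:local-to-global-analy-sing-type} promotes this to global analytic singularities. I do not anticipate a real obstacle; the only place requiring any care is the common-denominator step, which is needed because the rational coefficients $c_u,c_v$ are a priori distinct. This is essentially the argument of \cite[Proposition 4.1.8]{Dem15}.
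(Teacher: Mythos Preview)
Your argument is correct and is precisely the one the paper intends: the paper's proof consists solely of a reference to \cite[Proposition~4.1.8]{Dem15}, and you have reproduced that argument (common rational exponent, multinomial expansion, then the elementary $\max(A,B)\le A+B\le 2\max(A,B)$ bound). The invocation of Remark~\ref{rmk:local-to-global-analy-sing-type} to pass from local to global is exactly right in this paper's framework.
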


The following useful result due to Siu \cite{Siu74} will be also frequently used.

\begin{lemma}
\label{lem:Siu-deomp-lem}
    Assume $T\in\xi$ is a positive current and $D$ is a prime divisor on $X$. If $\nu(T,D)\geq a$ for some $a\in\QQ_{\geq 0}$, then $T-a[D]$ is a positive current in $\xi-a\{[D]\}$. Moreover, if $T$ has analytic singularities near a point, then so does $T-a[D]$.
\end{lemma}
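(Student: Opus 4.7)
The plan is to handle the two claims separately. The positivity of $T-a[D]$ is the classical Siu semicontinuity/decomposition theorem \cite{Siu74}: for a closed positive $(1,1)$-current $T$ and an irreducible analytic subset $D$ of codimension one with $\nu(T,D)\geq a$, the current $T-a[D]$ is a closed positive $(1,1)$-current (see e.g.~\cite{Dem12b}). Since $[D]$ represents the Bott--Chern class $\{[D]\}$, the resulting current lies in $\xi-a\{[D]\}$.

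For the second claim, write $T=\theta+\ddc\varphi$ and assume $\varphi$ has analytic singularities on a neighborhood of $p\in X$. If $p\notin D$, the conclusion is immediate since $[D]=0$ there. Otherwise, fix a connected neighborhood $U\ni p$ on which $D$ is cut out by a reduced holomorphic function $g\in\cO_X(U)$, and factor $g=g_1\cdots g_s$ into local irreducible equations for the branches $D_j=\{g_j=0\}$ of $D$ at $p$. By hypothesis, $\varphi=c\log\sum_i|f_i|^2+h$ on $U$ with $c\in\QQ_{\geq 0}$, $f_i\in\cO_X(U)$, and $h$ bounded; we may assume $c>0$, since otherwise $\nu(T,D)=0$ forces $a=0$ and the claim is trivial. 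Because each $g_j$ is irreducible in the UFD $\cO_{X,p}$, after shrinking $U$ we may factor $f_i=\prod_j g_j^{m_i^{(j)}}\tilde f_i$ with $\tilde f_i\in\cO_X(U)$ coprime to each $g_j$. A standard computation at a generic smooth point of $D_j$ gives $\nu(T,\cdot)=cm^{(j)}$ there, where $m^{(j)}:=\min_i m_i^{(j)}$; thus the hypothesis $\nu(T,D)=\inf_{x\in D}\nu(T,x)\geq a$ yields $cm^{(j)}\geq a$ for every $j$. Setting $F_i:=\prod_j g_j^{m_i^{(j)}-m^{(j)}}\tilde f_i$, factoring out the common powers gives
\[
\varphi-a\log|g|^2=\sum_j(cm^{(j)}-a)\log|g_j|^2+c\log\sum_i|F_i|^2+h.
\]

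Next, pick a positive integer $N$ such that $Nc$ and all $N(cm^{(j)}-a)$ are integers. Multinomial expansion of $\prod_j|g_j|^{2N(cm^{(j)}-a)}\bigl(\sum_i|F_i|^2\bigr)^{Nc}$ rewrites it as $\sum_\alpha \binom{Nc}{\alpha}|G_\alpha|^2$, where $\alpha$ ranges over multi-indices with $|\alpha|=Nc$ and $G_\alpha:=\prod_j g_j^{N(cm^{(j)}-a)}F^\alpha$ is holomorphic on $U$. Absorbing the positive multinomial coefficients into the bounded remainder yields
\[
\varphi-a\log|g|^2=\tfrac{1}{N}\log\sum_\alpha|G_\alpha|^2+h',
\]
with $h'$ bounded on $U$, so $T-a[D]=\theta+\ddc(\varphi-a\log|g|^2)$ has analytic singularities near $p$.

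The main bookkeeping obstacle is that $D$ may be singular at $p$, so that $g$ factors and the constant $a$ needs to be distributed over the several local branches. The hypothesis $\nu(T,D)\geq a$, read as a pointwise inequality on all of $D$, provides the required control along each branch separately, while the common-denominator/multinomial step above merges the remaining logarithmic terms into the single expression required by Definition~\ref{def-loc-analy-sing}.
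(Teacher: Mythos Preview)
Your proof is correct, but it takes a different route from the paper's. The paper proceeds more directly: after rescaling the class so that $a,c\in\mathbb{Z}$, it uses the already-established positivity of $T-a[D]$ to deduce that the potential satisfies $u\leq \log|w|^2+C$, where $w\in\mathcal{O}(U)$ is chosen with $\ddc\log|w|^2=a[D]$. This gives $|f_j^c|\leq C'|w|$, and the Riemann extension theorem then produces $g_j\in\mathcal{O}(U)$ with $f_j^c=wg_j$, so that $T-a[D]$ has local potential $\log\sum_j|g_j|^2+O(1)$.

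Your argument instead works entirely by explicit factorization in the UFD $\mathcal{O}_{X,p}$: you decompose the local equation $g$ into its irreducible branches, extract the $g_j$-adic valuations of each $f_i$, verify $cm^{(j)}\geq a$ branch by branch via the definition of the generic Lelong number, and then use a common denominator and the multinomial theorem to repackage the result in the required form. This is more hands-on and gives an explicit list of generators, at the cost of the extra bookkeeping you yourself flag. The paper's approach sidesteps the branch analysis altogether by letting the single function $w$ absorb all local components and delegating the divisibility to Riemann extension; it is shorter but less constructive.
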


\begin{proof}
    The first assertion follows from Siu's decomposition \cite{Siu74}.
    
Let $q \in X$. In a neighborhood of $q$ we have that
\[
u = c\log \sum_j |f_j|^2 + O(1).
\]

After rescaling we can assume that $a,c \in \mathbb Z$. Let $T = \theta+ \ddc u$ for some qpsh function $u$ and smooth closed $(1,1)$-form $\theta$.

By the Lelong--Poincar\'e formula there exists a neighborhood  $U$ of $q$ and $w \in \mathcal O(U)$ such that $\ddc \log |w|^2 = a [D]$.
By properties of Lelong numbers there exists $C>0$ such that
\[
u \leq \log |w|^2 + C.
\]
This implies that $|f_j|^{2c} \leq \tilde C|w|^2$ for all $j$, for some $\tilde C >0$. Due to the Riemann extension theorem, there exists $g_j \in \mathcal O(U)$ such that $f_j^c = w g_j$. As a result, the local singularities of $T - a [D]$ is modelled by $ \log \sum_j |g_j|^2$.
\end{proof}

We also need to refine the notion of analytic singularities, in a way that goes back to Demailly \cite{Dem92}. The definition below is a bit lengthy but turns out to be the most suitable one for this paper.

\begin{definition}
\label{def:analy-sing}
    Let $\mathcal I\subseteq\mathcal O_X$ be an analytic coherent ideal sheaf and $c\in\QQ_{>0}$. A qpsh function $\varphi$ on $X$ is said to have gentle analytic singularities of type $(\mathcal I,c)$ if 
\begin{enumerate}
    \item $\mathrm{e}^{{\varphi}/{c}}: X\to \RR_{\geq0}$ is a smooth function;
    \item locally $\varphi$ can be written as
    \begin{equation}
        \label{eq: anal_sing_I_def}
        \varphi=c\log\sum_i |f_i|^2+h,
    \end{equation}
where $\{f_i\}$ is a finite set of local generators of $\mathcal I$ and $h$ is a bounded remainder;
\item there is a proper bimeromorphic morphism $\pi: \tilde X\to X$ from a K\"ahler manifold $\tilde{X}$ and an effective $\mathbb{Z}$-divisor $D$ on $\tilde{X}$ such that one can write $\pi^*\varphi$ locally as
\[
\pi^*\varphi=c\log|g|^2+h,
\]
where $g$ is a local equation of the divisor $D$ and $h$ is smooth.
\end{enumerate}
\end{definition}

A positive current $T=\theta+\ddc\varphi \in \mathcal{Z}_+(X,\xi)$ is said to have \emph{gentle analytic singularities of type $(\mathcal{I},c)$} if so does $\varphi$. We set
\[
\Omega(X,\xi)\coloneqq \left\{\text{K\"ahler current in $\xi$ with gentle analytic singularities}\right\}.
\]

\begin{remark}\label{rem: smooth_remainder} Let $\varphi$ be a qpsh function with analytic singularities of type $(\mathcal I, c)$ such that the remainder $h$ is smooth in \eqref{eq: anal_sing_I_def}. Then $\varphi$ is easily seen to have gentle analytic singularities. The reverse direction is not true however, as the smoothness of the remainder depends on the choice of generators for $\mathcal I$ (we are grateful to S. Boucksom for helpful discussions regarding this point). 
For instance one can look at the potentials $\log(|z_1|^2+|z_2|^2)=\log(|z_1|^2+|2z_2|^2)+\log\frac{|z_1|^2+|z_2|^2}{|z_1|^2+|2z_2|^2}$ in $\CC^2$; the first has 0 as its smooth remainder while the remainder $\log\frac{|z_1|^2+|z_2|^2}{|z_1|^2+|2z_2|^2}$ of the second potential is not smooth.
This motivates the use of gentle analytic singularities throughout the paper.
\end{remark}

In the whole paper, we are using the word \emph{modification} in a 'restrictive' sense: We shall say that $\pi \colon Y\rightarrow X$ is a modification if it is bimeromorphic, and it is a composition of a sequence of blowing-ups with smooth centers. Observe that by the construction in \cite[Theorem~2.0.2]{Wlo09}, the embedded resolutions where the ambient space is compact are always modifications in this sense. Note that if $X$ is a K\"ahler manifold, then so is $Y$.

It follows from Definition~\ref{def:analy-sing} that one can always resolve the singularities of K\"ahler currents with gentle analytic singularities. More precisely, by \cite[Theorem~3.6.1]{Wlo09}, for any $R\in\Omega(X,\xi)$, one can find a modification $\pi:Y\to X$  so that
\[
\pi^*R=\beta+[D],
\]
where $\beta$ is a smooth semipositive $(1,1)$-form on $\tilde X$ and $D$ is an effective $\QQ$-divisor on $Y$.

A classical regularization result due to Demailly \cite{Dem92} shows that there are plenty of K\"ahler currents in $\xi$ with gentle analytic singularities. We refer the reader to \cite[Theorem 3.2]{DP04} for a proof of the following version of Demailly's regularization theorem, and also \cite[Corollary 13.13]{Dem15} for a detailed exposition. 

\begin{theorem}
\label{thm:Dem-reg}
	Let $T=\theta+\ddc\varphi\in \mathcal{Z}_+(X,\xi)$ be a K\"ahler current. Then one can construct a sequence $T_k=\theta+\ddc\varphi_k$, which are K\"ahler currents with gentle analytic singularities of type $(\mathcal I(k\varphi),\frac{1}{k})$ and $\varphi\leq\varphi_k$ for all $k\gg0$. Here $\mathcal I(k\varphi)$ denotes the multiplier ideal sheaf of $k\varphi$ that is generated by holomorphic functions $f\in\mathcal O_X$ such that $|f|^2\mathrm{e}^{-k\varphi}\in L^1_{\text{loc}}$.
\end{theorem}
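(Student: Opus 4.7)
The plan is to follow Demailly's original Bergman-kernel regularization strategy from \cite{Dem92} (in the quantitative form of \cite[Theorem~3.2]{DP04}, with a detailed exposition in \cite[Chapter~13]{Dem15}), making only the small additional observations needed to ensure that the approximations $T_k$ have \emph{gentle} analytic singularities in the sense of Definition~\ref{def:analy-sing}, rather than merely analytic singularities. First I would set up local Bergman potentials. Cover $X$ by finitely many coordinate balls $B_\alpha$ on which $\theta=\ddc\psi_\alpha$ with $\psi_\alpha$ smooth, so that $u_\alpha\coloneqq\varphi+\psi_\alpha\in \PSH(B_\alpha)$. For each $k$, let $\cH_{k,\alpha}$ be the Hilbert space of holomorphic functions $f$ on $B_\alpha$ with $\int_{B_\alpha}|f|^2\mathrm{e}^{-ku_\alpha}\,d\lambda<\infty$, pick an orthonormal basis $\{\sigma_{k,\alpha,j}\}_j$, and define
\[
\varphi_{k,\alpha}\coloneqq \frac{1}{k}\log\sum_j|\sigma_{k,\alpha,j}|^2-\psi_\alpha.
\]
By construction $\theta+\ddc\varphi_{k,\alpha}\geq 0$ on $B_\alpha$, $\mathrm{e}^{k\varphi_{k,\alpha}}=\bigl(\sum_j|\sigma_{k,\alpha,j}|^2\bigr)\mathrm{e}^{-k\psi_\alpha}$ is manifestly smooth, and by standard $L^2$ methods the $\sigma_{k,\alpha,j}$ generate $\mathcal{I}(k\varphi)|_{B_\alpha}$.

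Next I would establish Demailly's two-sided comparison: the sub-mean-value inequality for $u_\alpha$ yields $\varphi_{k,\alpha}(z)\leq \sup_{|\zeta-z|\leq r}\varphi(\zeta)+\frac{C}{k}|\log r|$, while the Ohsawa--Takegoshi $L^2$ extension theorem applied to the constant $1$ at an arbitrary base point gives $\varphi\leq\varphi_{k,\alpha}+C'/k$ on any slightly smaller ball. Since $T\geq\delta\omega$, running the construction with $\theta-\epsilon\omega$ in place of $\theta$ for some $0<\epsilon<\delta$ upgrades positivity to $\theta+\ddc\varphi_{k,\alpha}\geq \epsilon\omega$. On each overlap $B_\alpha\cap B_\beta$, both $\varphi_{k,\alpha}$ and $\varphi_{k,\beta}$ approximate $\varphi$ up to $O(1/k)$, so their difference is uniformly bounded by $O(1/k)$; with this control, the Richberg-type regularized maximum of \cite[\S I.5.E]{Dem12b} (the very tool powering Theorem~\ref{thm:CT-thm-refined}) glues the $\varphi_{k,\alpha}$ into a global $\varphi_k\in\PSH(X,\theta)$, with regularization parameter chosen small enough that $\theta+\ddc\varphi_k\geq(\epsilon/2)\omega$ survives and $\varphi\leq\varphi_k+O(1/k)$.

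To complete the verification that $\varphi_k$ has gentle analytic singularities of type $(\mathcal{I}(k\varphi),1/k)$, I would argue as follows. Condition~(2) of Definition~\ref{def:analy-sing} follows because near every $z$, $\varphi_k$ is within $O(1/k)$ of some $\varphi_{k,\alpha}$ whose local generators form the ideal $\mathcal{I}(k\varphi)|_{B_\alpha}$; smoothness of $\mathrm{e}^{k\varphi_k}$ in condition~(1) comes from the fact that regularized max is a smooth function of its (smooth) inputs $\mathrm{e}^{k\varphi_{k,\alpha}}$, using the crucial point that the singular loci of the various $\varphi_{k,\alpha}$ all coincide with the common zero locus of the globally defined ideal $\mathcal{I}(k\varphi)$; condition~(3) follows by taking a log resolution $\pi\colon\tilde X\to X$ of the coherent sheaf $\mathcal{I}(k\varphi)$, after which $\pi^*\mathcal{I}(k\varphi)=\mathcal O_{\tilde X}(-D)$ for an effective divisor $D$ and $\pi^*\varphi_k=\frac{1}{k}\log|g|^2+h$ with $g$ a defining equation for $D$ and $h$ smooth. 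The main delicate point throughout is the gluing step: one must simultaneously preserve K\"ahler positivity (achieved by shrinking the regularization parameter well below $\epsilon$) and the gentle smooth-exponential structure (which is preserved precisely because the singular loci of the local Bergman potentials match, leaving no room for the regularized max to mix inputs that vanish on different sets).
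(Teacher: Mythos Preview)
The paper gives no proof of its own for this theorem; it simply refers to \cite[Theorem~3.2]{DP04} and \cite[Corollary~13.13]{Dem15}. Your sketch follows exactly that Bergman-kernel regularization strategy, so at the level of overall approach there is nothing to compare.

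There is, however, a genuine gap in your verification of condition~(1) of Definition~\ref{def:analy-sing}. You assert that smoothness of $\mathrm{e}^{k\varphi_k}$ follows because ``regularized max is a smooth function of its (smooth) inputs $\mathrm{e}^{k\varphi_{k,\alpha}}$''. But the regularized max $M_\eta$ is applied to the potentials $\varphi_{k,\alpha}$, not to their exponentials. Using translation-equivariance one has
\[
\mathrm{e}^{k\varphi_k}=\mathrm{e}^{k\varphi_{k,\alpha_1}}\cdot \mathrm{e}^{\,kM_\eta\bigl(0,\varphi_{k,\alpha_2}-\varphi_{k,\alpha_1},\ldots\bigr)},
\]
so smoothness would require the differences $\varphi_{k,\alpha}-\varphi_{k,\beta}$, i.e.\ the logarithms of ratios of local Bergman kernels, to extend smoothly across $V(\mathcal{I}(k\varphi))$. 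This fails in general: if $F_1=|z_1|^2+|z_2|^2$ and $F_2=|z_1|^2+|z_1+z_2|^2$ (both generating $\mathfrak m_0\subset\mathcal O_{\CC^2,0}$), then $F_1/F_2$ equals $1/2$ along the $z_1$-axis and $1$ along the $z_2$-axis, so $\log(F_1/F_2)$ is not even continuous at the origin. Consequently $\mathrm{e}^{kM_\eta(\varphi_{k,\alpha})}$ is typically only $C^1$, not $C^\infty$, at points of $V(\mathcal{I}(k\varphi))$. Your parenthetical justification---that the singular loci coincide---guarantees only that the ratio is bounded above and below, not that it is smooth. Your argument for condition~(3) is on firmer ground (after a log resolution the pulled-back Bergman kernels become $|g|^2$ times smooth positive functions, so the differences \emph{are} smooth upstairs and the regularized max works there), but as the example $f=|z_1|^4/(|z_1|^2+|z_2|^2)$ shows, smoothness of $\mathrm{e}^{k\varphi_k}\circ\pi$ on $\tilde X$ does not descend to smoothness on $X$. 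To salvage~(1) you need a different gluing---for instance a log-sum-exp $\tfrac{1}{k}\log\sum_\alpha \mathrm{e}^{k\varphi_{k,\alpha}-\rho_\alpha}$ with suitable damping functions $\rho_\alpha$, which preserves both the K\"ahler lower bound and the smooth-exponential structure---rather than the regularized max.
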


As a direct consequence, we have the following result.

\begin{lemma}\label{lem:sing-decrease}
	For any two K\"ahler currents $T_1,T_2\in\mathcal{Z}_+(X,\xi)$, one can find $T_3\in\Omega(X,\xi)$ that is less singular than both $T_1$ and $T_2$.
\end{lemma}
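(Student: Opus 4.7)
The plan is a two-step construction: take the pointwise maximum of the potentials and then apply Demailly's regularization.

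First I would write $T_i = \theta + \ddc\varphi_i$ for $i=1,2$, with $\theta$ a smooth closed $(1,1)$-form representing $\xi$ and $\varphi_i \in \PSH(X,\theta)$. Since $T_i$ is a K\"ahler current, there is $\varepsilon_i > 0$ with $\theta + \ddc\varphi_i \geq \varepsilon_i \omega$, i.e.\ $\varphi_i \in \PSH(X,\theta - \varepsilon_i \omega)$. Set $\varepsilon = \min(\varepsilon_1,\varepsilon_2) > 0$ and $\varphi \coloneqq \max(\varphi_1,\varphi_2)$. Since the pointwise maximum of two $(\theta - \varepsilon\omega)$-psh functions is $(\theta - \varepsilon\omega)$-psh, we conclude that $\varphi \in \PSH(X,\theta - \varepsilon\omega)$, and hence
\[
S \coloneqq \theta + \ddc\varphi \geq \varepsilon\omega
\]
is again a K\"ahler current in $\xi$. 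By construction $\varphi \geq \varphi_i$ for $i=1,2$, so $S$ is already less singular than both $T_1$ and $T_2$; the only issue is that $\varphi$ need not have analytic singularities, let alone gentle ones.

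To remedy this, I would apply Demailly's regularization Theorem~\ref{thm:Dem-reg} to the K\"ahler current $S$. This produces a sequence of K\"ahler currents $S_k = \theta + \ddc\varphi_k$ with gentle analytic singularities of type $(\mathcal I(k\varphi), 1/k)$ such that $\varphi_k \geq \varphi$ for all $k \gg 0$. Pick any such $k$ and set $T_3 \coloneqq S_k \in \Omega(X,\xi)$. Then $\varphi_k \geq \varphi = \max(\varphi_1,\varphi_2) \geq \varphi_i$, so $T_3$ is less singular than both $T_1$ and $T_2$, completing the proof.

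There is no real obstacle here: the key point is simply that $\max$ preserves both the class and the K\"ahler current property (with a loss in the $\varepsilon$), which gives us a qpsh representative dominating both $\varphi_1$ and $\varphi_2$, to which Demailly's regularization can be directly applied. The only thing one should verify carefully is that the regularization output lies in $\Omega(X,\xi)$, but that is exactly the content of Theorem~\ref{thm:Dem-reg} together with the definition of $\Omega(X,\xi)$.
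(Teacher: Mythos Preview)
Your proof is correct and follows essentially the same route as the paper: take the maximum of the two potentials to obtain a K\"ahler current less singular than both, then apply Demailly's regularization (Theorem~\ref{thm:Dem-reg}) to land in $\Omega(X,\xi)$. If anything, your write-up is slightly more explicit than the paper's in justifying why the max remains a K\"ahler current.
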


\begin{proof}
	Write $T_i=\theta+\ddc\varphi_i$ for $i=1,2$. Put $\varphi_0\coloneqq \max\{\varphi_1,\varphi_2\}$. Then $T\coloneqq \omega+\ddc\varphi_0$ is a K\"ahler current that is less singular than both $T_1$ and $T_2$. Note that $T$ may not have gentle analytic singularities, but then we can apply Theorem~\ref{thm:Dem-reg} to find another K\"ahler current $T'\in\Omega(X,\xi)$ that is less singular than $T$.
\end{proof}

This lemma together with the Noetherian property of analytic sets imply that one can find $R\in\Omega(X,\xi)$ such that $E_+(R)=E_{nK}(\xi)$. So in particular, $E_{nK}(\xi)$ is a closed analytic subset of $X$ (see \cite[Theorem~3.17]{Bou04}).

\begin{remark}\label{rmk:pullbackanaly}
By definition, if $T\in \mathcal{A}(X)$ has type $(\mathcal{I},c)$ and $g:Y\rightarrow X$ is a proper bimeromorphic morphism and $Y$ is a K\"ahler manifold, then $g^*T$ has type $(\mathcal{O}_Y\cdot g^{-1}\mathcal{I},c)$.

In fact, if we locally write $T$ as $\ddc \varphi$ for some $\varphi$ satisfying $\varphi=c\log (|f_1|^2+\cdots+|f_N|^2)+O(1)$, then $g^*T$ can locally be written as $c\log (|g^*f_1|^2+\cdots+|g^*f_N|^2)+O(1)$. The ideal generated by $g^*f_1,\ldots,g^*f_N$ is just $\mathcal{O}_Y\cdot g^{-1}\mathcal{I}$.
\end{remark}

By the above remark, the pullback of currents with analytic singularities is well-behaved. The same can not be said about pushforwards. However, the following result suggested by Boucksom shows that in certain particular situations good behaviour can be expected:
 \begin{lemma}
 \label{lem:push-forward-has-analy-sing}
    Assume that $\pi:Z\to X$ is a proper bimeromorphic morphism and $Z$ is a K\"ahler manifold.  Let $\xi$ be a big class on $X$ and $D$ an effective $\mathbb{Q}$-divisor on $Z$. Assume that
       $\pi^*\xi-\{D\}$ contains a smooth K\"ahler form $\beta$. Let $T\coloneqq \beta+[D]$.
        Then $\pi_* T$ belongs to $\mathcal{A}(X,\xi)$.
\end{lemma}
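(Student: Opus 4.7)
The plan is to reduce to showing that the qpsh potential of $\pi_* T$ has analytic singularities on $X$, then to apply a descent argument via a common resolution. First, pick $m \in \ZZ_{>0}$ so that $mD$ is an integral divisor, and set $\mathcal{I}_D := \mathcal{O}_Z(-mD)$, an invertible ideal sheaf on $Z$. Choose a smooth form $\theta \in \xi$. Combining the Poincar\'e--Lelong formula $[D] = \gamma_D + \ddc \phi_D$ (where $\gamma_D$ is a smooth representative of $\{D\}$ and $\phi_D = \frac{1}{m}\log\|s_D\|_h^2$ for a smooth Hermitian metric on $\mathcal{O}_Z(mD)$) with the relation $\beta + \gamma_D = \pi^*\theta + \ddc\sigma$ coming from $\{\beta\}=\pi^*\xi-\{D\}$ (with $\sigma$ smooth on $Z$), we write $T = \pi^*\theta + \ddc\phi$ where $\phi = \sigma + \phi_D$ has analytic singularities of type $(\mathcal{I}_D, 1/m)$. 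By Lemma~\ref{lma:Zariskimain}, set $\tilde\phi := \pi_*\phi \in \PSH(X,\theta)$, so $\pi_*T = \theta + \ddc\tilde\phi$ is a positive current in $\xi$. It remains to show $\tilde\phi$ has analytic singularities on $X$, which by Remark~\ref{rmk:local-to-global-analy-sing-type} is a local question near each $x \in X$.

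Next, I would introduce a common resolution $q: Y \to X$ factoring as $q = \pi\circ p$ with $p: Y \to Z$ a modification, and arrange via Hironaka that $p^*\mathcal{I}_D = \mathcal{O}_Y(-F)$ is invertible for some SNC effective divisor $F$ on $Y$. Define the coherent ideal sheaf $\mathcal{J} := q_*\mathcal{O}_Y(-F) \subseteq \mathcal{O}_X$, using $q_*\mathcal{O}_Y = \mathcal{O}_X$. Pulling back, $q^*\tilde\phi = p^*\phi = \frac{1}{m}\log|s_F|^2 + O(1)$ locally. For local generators $f_1,\dots,f_N$ of $\mathcal{J}$ near $x$, the inclusion $q^*f_i \in \mathcal{O}_Y(-F)$ means $q^*f_i = s_F \cdot h_i$ with $h_i$ holomorphic. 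Since $|h_i|^2$ are locally bounded above, descending the pullback inequality yields $\frac{1}{m}\log\sum|f_i|^2 \leq \tilde\phi + O(1)$ on $X$. For the reverse bound one needs $\sum|h_i|^2 \geq c > 0$ locally, which holds precisely when $q^*\mathcal{J}\cdot\mathcal{O}_Y = \mathcal{O}_Y(-F)$; one can arrange this by further blowing up $Y$ so that $q$ is simultaneously a log resolution of $\mathcal{J}$. Together the two bounds give $\tilde\phi = \frac{1}{m}\log\sum|f_i|^2 + O(1)$ near $x$, completing the proof.

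The principal obstacle is producing a single $Y$ that serves as a log resolution of both $\mathcal{I}_D$ (on $Z$) and $\mathcal{J}$ (on $X$) with the \emph{same} divisor $F$. Since $\mathcal{J}$ is itself defined in terms of $F$, the condition is circular at first glance. The fix is iterative: starting with any $Y_0$ satisfying the upstairs condition, form $\mathcal{J}_0$, further blow up smoothly to resolve $\mathcal{J}_0$, recompute, and continue. By Noetherianity of the lattice of coherent ideal sheaves on $X$, the resulting sequence stabilizes after finitely many steps, producing the desired common resolution and yielding the matching two-sided estimate above.
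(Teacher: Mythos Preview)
Your setup in the first paragraph is fine, and reducing the problem to showing that $\pi^{-1}\mathcal{J}\cdot\mathcal{O}_Y=\mathcal{O}_Y(-F)$ (equivalently, that $\mathcal{O}_Z(-mD)$ is $\pi$-globally generated) is exactly the right reformulation. The gap is in your proposed fix for this last step: the iterative blow-up/Noetherian argument is vacuous. Since $\mathcal{I}_D=\mathcal{O}_Z(-mD)$ is already invertible, you may take $Y=Z$ and $F=mD$; for any further modification $p':Y'\to Z$ one has $p'_*\mathcal{O}_{Y'}(-p'^*F)=\mathcal{O}_Z(-F)$ by the projection formula and $p'_*\mathcal{O}_{Y'}=\mathcal{O}_Z$, so $\mathcal{J}=q_*\mathcal{O}_Y(-F)=\pi_*\mathcal{O}_Z(-mD)$ is \emph{independent} of the chosen resolution. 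Your sequence $\mathcal{J}_0,\mathcal{J}_1,\dots$ is therefore constant from the start; it stabilizes trivially but gives no information beyond the single equality you need, namely $\pi^{-1}\big(\pi_*\mathcal{O}_Z(-mD)\big)\cdot\mathcal{O}_Z=\mathcal{O}_Z(-mD)$. Replacing $F$ at each step by the divisor resolving $\mathcal{J}$ does not help either: that new divisor dominates the pullback of $mD$, so you lose the identity $q^*\tilde\phi=\tfrac{1}{m}\log|s_F|^2+O(1)$ and with it the upper bound on $\tilde\phi$.

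The deeper issue is that your argument never uses the K\"ahler hypothesis on $\beta$, and without it the statement you need is simply false in general: relative global generation of $\mathcal{O}_Z(-mD)$ is a positivity statement. What makes it true here is that $\{-D\}|_{\text{fibres of }\pi}=\{\beta\}|_{\text{fibres}}$ is K\"ahler, so $-D$ is $\pi$-ample and hence $\mathcal{O}_Z(-kD)$ is $\pi$-globally generated for $k\gg 0$. The paper establishes this analytically (since $\pi$ need not be projective): one twists a metric on $\mathcal{O}(-kD)$ by a local potential of a K\"ahler form on $X$ and a logarithmic weight concentrated at a point $z\in Z$, obtaining a strictly positive metric on $\mathcal{O}(-kD)\otimes K_Z^{-1}$ over $\pi^{-1}(U)$ for $k$ large; then H\"ormander/Demailly $L^2$-estimates on the weakly pseudoconvex $\pi^{-1}(U)$ solve $\bar\partial$ with an $L^2$ bound forcing vanishing at $z$, which together with Nakayama's lemma yields that $\mathcal{O}(-kD)(\pi^{-1}(U))$ generates the stalk at $z$. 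This is the missing ingredient you need to supply in place of the Noetherian step.
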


\begin{proof}
By rescaling, we may assume that $D$ is an integral divisor.

After a further rescaling of $D$, we claim that
$-D$ is $\pi$-globally generated in the following sense: For all $z\in Z$ there exists an open neighborhood $U \subset X$ of $\pi(z)$ such that the sections $\mathcal O (-D)(\pi^{-1}(U))$ generate $\mathcal O (-D)_z$. Let us assume the claim for the moment. It implies that: 
\begin{equation}\label{eq: sheaf_id} \mathcal{O}_Z\cdot\pi^{-1}\pi_*\mathcal{O}_Z(-D) = \mathcal O_Z(-D).
\end{equation}

By definition, $T$ has analytic singularities of type $\mathcal{O}_Z(-D)$. We want to show that $\pi_*T\in \alpha$ has analytic singularities of type $\pi_*\mathcal{O}_Z(-D)$.

By \cite[Proposition~2.2]{Bou02b},
there is a closed positive current $S$ on $X$ with analytic singularities of type $\pi_*\mathcal{O}_Z(-D)$. Then $\pi^*S$ has analytic singularities of type $\mathcal{O}_Z\cdot\pi^{-1}\pi_*\mathcal{O}_Z(-D)$ by Remark~\ref{rmk:pullbackanaly}.

From \eqref{eq: sheaf_id} we obtain that $\pi^*S\sim T= \pi^*\pi_*T$ (the latter equality follows from Lemma~\ref{lma:Zariskimain}). It follows that $S\sim \pi_*T$. So $\pi_*T$ has analytic singularities of type $\pi_*\mathcal{O}_Z(-D)$.

Finally, we address the claim. This is a simple consequence of \cite[Proposition~1.4]{Nak87} when $\pi$ is projective. Here we give a different proof by adapting the $L^2$ techniques of \cite[Theorem~6.27]{Dem15}. 

Let $\omega$ be a K\"ahler metric of $X$,  satisfying $\theta < \omega$, where $\theta$ is a smooth closed $(1,1)$-form in the class of $\xi$.

Let $U \subset X$ be a coordinate ball neighborhood of $\pi(z)$. We can assume existence of $v \in C^\infty(U)$ such that $\omega = \ddc v$. 

Let $h$ be a hermitian metric of $\mathcal O(-D)$ on $Z$ such that $\mathrm{i}\Theta(h) = \beta - \pi^*\theta$. We consider $\tilde h = h \mathrm{e}^{-v \circ \pi}$, a hermitian metric on $\mathcal O(-D)|_{\pi^{-1}(U)}$. We get that $\tilde h$ is a positive metric, since $\mathrm{i}\Theta(h \mathrm{e}^{-v \circ \pi}) = \beta + \pi^*(\omega - \theta)\geq \beta > 0$.

Let $f_z\geq 0$ be a  smooth function on $X$ such that $ f_z(x) = |x - z|^{2n +1}$ in a neighborhood of $z$, and $f_z$ is bounded away from zero, away from $z$. Since $Z$ is compact, we can construct $f_z$ in such a manner that there exists $\varepsilon >0$ and $C> 1$, both independent of $z$, such that:
\begin{equation}\label{eq: C_2_est}
f_z|_{B(z,\varepsilon)} = |x - z|^{2n +1}, \ \ \|f_z\|_{C^2(Z)} \leq C, \ \ \sup_{X \setminus B(z,\varepsilon)} \bigg|\frac{1}{f_z}\bigg| \leq C,
\end{equation}
where $B(z,\varepsilon)$ is a geodesic ball centered at $z$, with radius $\varepsilon>0$.

 Let $m$ be an arbitrary hermitian metric on $K_Z^{-1}$. Let $\hat h^k \coloneqq  \tilde h^k \otimes m  \mathrm{e}^{-\log f_z}$ be a hermitian metric on $\mathcal O(-kD) \otimes K_Z^{-1}|_{\pi^{-1}(U)}$. We have that 
 \[
 \mathrm{i}\Theta(\tilde h^k) = \mathrm{i}\Theta(h^k) + \mathrm{i}\Theta(m) + \ddc \log f_z \geq k \beta + \mathrm{i}\Theta(m) + \ddc \log f_z.
 \]
 Since $\log f_z$ is psh in $B(z,\varepsilon)$, for $k$ big enough (only dependent on the constant $C$ of \eqref{eq: C_2_est}) $\mathrm{i}\Theta(\hat{h}^k)$ is a strongly positive metric.
 
As a result of the compactness of $Z$, taking a sufficiently divisible $k$, we can replace $D$ (resp. $\theta$, $\omega$, $\beta$) with $kD$ (resp. $k\theta$, $k\omega$, $k\beta$) and assume that $\hat h$ is a strongly positive (singular) metric of $\mathcal O(-D) \otimes K_Z^{-1}$ on ${\pi^{-1}(U)}$.

Let $q \in \mathcal O(-D)_z = \mathcal (O(-D) \otimes K_Z^{-1} \otimes K_Z)_z$. Let $\phi:Z\rightarrow \mathbb{R}_{\geq 0}$ be a  smooth function supported in a neighbourhood of $z$, identically equal to $1$ near $z$, so that $\phi q \in C^\infty(\pi^{-1}(U), O(-D))$.
Then $\bar \partial (\phi q)$ is a smooth $1$-form on $\pi^{-1}(U)$, with values in  $\mathcal O(-D) \otimes K_Z^{-1} \otimes K_Z \otimes \mathcal I(\mathrm{e}^{-\log f_z})$.

Since $U$ is a coordinate ball, we get that $\pi^{-1}(U)$ is weakly pseudoconvex, so \cite[Corollary 5.3]{Dem15} is applicable to yield existence of a smooth section $g$ of $\mathcal O(-D) \otimes K_Z^{-1} \otimes K_Z \otimes \mathcal I(\mathrm{e}^{-\log f_z})$, defined on $\pi^{-1}(U)$ with $\overline{\partial}g=\bar\partial(\phi q)$ on $\pi^{-1}(U)$.

Due to our choice of $f_z$,   $g$ must vanish at $z$. Hence, $w \coloneqq \phi q - g \in H^0(\pi^{-1}(U), O(-D))$, moreover $( q-w)_z \in \mathfrak{m}_{Z,z} \cdot \mathcal O(-D)_z$. 

Since $\mathcal O(-D)_z$ is a finite $\mathcal O_{Z,z}$-module, we apply 
Nakayama's lemma \cite[Proposition~2.8]{AM69} to conclude that $\mathcal O (-D)(\pi^{-1}(U))$ generates $\mathcal O (-D)_z$, proving the claim.
\end{proof}

Finally, we recall the notion of non-pluripolar volumes.

\begin{definition}[\cite{Bou02b}]
	The volume $\vol(\xi)$ of pseudoeffective class $\xi$ is zero if $\xi$ is not big. If $\xi$ is big then
\[
\vol(\xi)\coloneqq \sup_{T\in\Omega(X,\xi)}\int_{X}T^n.
\]
\end{definition}
Here and in the sequel, products of the form $T^n$ are interpreted as non-pluripolar complex Monge--Amp\`ere measures, as introduced in \cite{BEGZ10}. 
It is shown in \cite{Bou02b} that the volume function $\vol(\cdot)$ thus defined is continuous in $H^{1,1}(X,\RR)$.

\begin{definition}[\cite{His12,Mat13,WN21,CT22}]
	Assume $Z\subset X$ is a closed complex submanifold of dimension $d$. If $Z\cap K(\xi)\neq\varnothing$,
then the restricted volume $\vol_{X|Z}(\xi)$ of $\xi$ along $Z$ is 
\[
\vol_{X|Z}(\xi)\coloneqq \sup_{T\in\Omega(X,\xi)}\int_{Z}T|_Z^d.
\]
If $Z\cap K(\xi+\varepsilon\{\omega\})\neq\varnothing$ for any $\varepsilon>0$, then set
\[
\vol_{X|Z}(\xi)\coloneqq \lim_{\varepsilon\to 0}\vol_{X|Z}(\xi+\varepsilon\{\omega\}).
\]
Otherwise, set $\vol_{X|Z}(\xi)\coloneqq 0$.
\end{definition}
Note that the restricted volume $\vol_{X|Z}(\xi)$ could be strictly smaller than the volume of the restricted class $\xi|_Z$.
In general, the restricted volume is not easy to compute. But when $\xi$ is semipositive, one simply has $\vol_{X|Z}(\xi)=\vol(\xi|_Z)=\int_Z\xi^d$. Also recall that, in the open cone of big classes whose non-K\"ahler locus does not contain $Z$, the function $\vol_{X|Z}(\cdot)$ is continuous \cite[Corollary 4.11]{Mat13}.

The following result was proved by the third named author \cite{WN21} and extended very recently by Vu \cite{Vu23} (see also \cite[Theorem~1.5]{Den17} for the surface case and \cite{BFJ09,LM09,WN19a} for the case of projective manifolds). It will play a crucial role in proving Theorem~\ref{thm:main}.

\begin{theorem}
\label{thm:WN-volume-identity}
	Assume that $Z$ is a connected smooth hypersurface in $X$ such that $Z\cap K(\xi)\neq\varnothing$. Put $\nu_{\max}(\xi,Z)\coloneqq \sup\{\nu(T,Z):T\in\Omega(X,\xi)\}$.
 Then one has
	\begin{equation*}
	\frac{\mathrm d}{\mathrm dt}\bigg|_{t=0}\vol(\xi-t\{Z\})=-n\vol_{X|Z}(\xi) \ \ \ 
    \end{equation*}
    and 
    \begin{equation*}
   \vol(\xi)=n\int_0^{\nu_{\max}(\xi,Z)}\vol_{X|Z}(\xi-t\{Z\})\,\mathrm dt.
    \end{equation*}
\end{theorem}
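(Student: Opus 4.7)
The plan is first to prove the differential identity $\frac{d}{dt}\big|_{t=0}\vol(\xi - t\{Z\}) = -n\vol_{X|Z}(\xi)$, and then deduce the integral formula by integration, using the boundary behavior at $t = \nu_{\max}(\xi,Z)$.

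For the differential identity I would establish two matching asymptotic inequalities. The first inequality $\vol(\xi - t\{Z\}) \geq \vol(\xi) - nt\,\vol_{X|Z}(\xi) - o(t)$ is the constructive side: choose $T \in \Omega(X,\xi)$ with $\int_Z T|_Z^{n-1}$ close to $\vol_{X|Z}(\xi)$, then ensure $\nu(T,Z) > 0$ by mixing with a K\"ahler current in $\xi$ having prescribed Lelong number along $Z$ (harmless since $Z\cap K(\xi)\neq\varnothing$, so currents with $\nu(\cdot,Z)$ ranging continuously from $0$ to $\nu_{\max}$ are available). For $t$ sufficiently small, the current $T - t[Z] + t\chi$, with $\chi$ a smooth representative of $\{Z\}$ provided by Chern--Weil, lies in $\Omega(X, \xi - t\{Z\})$. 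Resolving the singularities of $T$ via a modification $\pi\colon\tilde X\to X$ so that $\pi^*T = \beta + [D]$ with $\beta$ smooth semipositive and $D$ an effective $\QQ$-divisor, one expands the non-pluripolar Monge--Amp\`ere mass by multilinearity as
\[
\int_X (T - t[Z] + t\chi)^n = \int_X T^n - nt\int_Z T|_Z^{n-1} + O(t^2),
\]
the Lelong--Poincar\'e formula controlling the mixed terms on $\tilde X$. Passing to the supremum over admissible $T$ yields the bound.

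The reverse inequality $\vol(\xi - t\{Z\}) \leq \vol(\xi) - nt\,\vol_{X|Z}(\xi) + o(t)$ is the orthogonality side, in the spirit of Boucksom. Given any K\"ahler current $S \in \Omega(X, \xi - t\{Z\})$, the current $S + t[Z]\in\xi$ has $\nu(\cdot, Z)\geq t$, and one has
\[
\vol(\xi) \geq \int_X (S+t[Z])^n = \int_X S^n + nt \int_Z S|_Z^{n-1} + O(t^2),
\]
justified after Demailly regularization and resolution. To close the argument one bounds $\int_Z S|_Z^{n-1}$ from below by $\vol_{X|Z}(\xi) - o(1)$ for $S$ nearly optimal in the defining supremum of $\vol(\xi - t\{Z\})$; the continuity of $\vol_{X|Z}(\cdot)$ on the open cone where $Z$ meets the K\"ahler locus \cite[Corollary 4.11]{Mat13} is the essential input here.

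The integral formula then follows. First, $\vol(\xi - t\{Z\}) = 0$ for $t \geq \nu_{\max}(\xi,Z)$: otherwise a K\"ahler current $S$ in $\xi - t\{Z\}$ would give $S + t[Z]\in\xi$, and Demailly regularization (Theorem~\ref{thm:Dem-reg}) would extract a K\"ahler current in $\Omega(X,\xi)$ with Lelong number along $Z$ arbitrarily close to $t > \nu_{\max}$, contradicting the definition of $\nu_{\max}$. Applying the differential identity at each $s \in (0, \nu_{\max})$ to $\xi - s\{Z\}$ gives $-\tfrac{d}{ds}\vol(\xi - s\{Z\}) = n\,\vol_{X|Z}(\xi - s\{Z\})$, and integration over $[0, \nu_{\max}]$ yields the stated identity. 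The main obstacle is the orthogonality step in the reverse inequality, since the $h^0$-based techniques of \cite{LM09, BFJ09} are unavailable in the transcendental setting, and one must carefully track singular contributions to non-pluripolar Monge--Amp\`ere products to rule out hidden mass loss at the polar locus of $S$ along $Z$.
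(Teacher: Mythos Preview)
The paper does not prove the derivative formula $\frac{d}{dt}\big|_{t=0}\vol(\xi-t\{Z\})=-n\vol_{X|Z}(\xi)$ at all; it simply quotes \cite[Theorem~C]{WN21}, whose proof proceeds via deformation to the normal bundle. Your attempt to reprove it by matching upper and lower asymptotics has a genuine gap in the reverse inequality. You claim that for $S\in\Omega(X,\xi-t\{Z\})$ nearly realizing $\vol(\xi-t\{Z\})$ one has $\int_Z S|_Z^{n-1}\geq \vol_{X|Z}(\xi)-o(1)$, and you cite the continuity of $\vol_{X|Z}(\cdot)$ from \cite{Mat13} as the essential input. But that continuity only tells you $\sup_{S}\int_Z S|_Z^{n-1}=\vol_{X|Z}(\xi-t\{Z\})\to \vol_{X|Z}(\xi)$; it says nothing about the particular $S$ you selected to nearly maximize $\int_X S^n$. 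The assertion that a near-maximizer of the volume is automatically a near-maximizer of the restricted volume is exactly the orthogonality-type statement that constitutes the core of \cite{WN21}, and it cannot be read off from \cite{Mat13}. You essentially concede this in your final sentence.

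There is also a smaller slip in the lower bound: the current $T-t[Z]+t\chi$ with $\chi$ a smooth representative of $\{Z\}$ lies in $\xi$, not in $\xi-t\{Z\}$. You presumably intend $T-t[Z]$, which does lie in $\xi-t\{Z\}$ by Siu's decomposition once $\nu(T,Z)\geq t$.

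For the integral formula your outline is close in spirit to the paper's, but one step is missing: before applying the derivative identity at an interior point $s\in(0,\nu_{\max})$, you must verify the hypothesis $Z\cap K(\xi-s\{Z\})\neq\varnothing$. The paper does this explicitly by interpolating between a current in $\Omega(X,\xi)$ with $\nu(\cdot,Z)=0$ (which exists since $Z\cap K(\xi)\neq\varnothing$) and one with $\nu(\cdot,Z)$ arbitrarily close to $\nu_{\max}$, then subtracting $s[Z]$.
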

\begin{remark}
    According to the recent result of \cite{Vu23}, the assumption on $Z$ can be relaxed: it suffices to assume that $Z$ is an irreducible hypersurface in $X$. In particular, the case where $Z$ is contained in the non-K\"ahler locus of $\xi$ is also allowed.
\end{remark}

\begin{proof}
    The first point is due to \cite[Theorem C]{WN21}. The second point is also contained in \cite{WN21}. We recall its proof for the reader's convenience. 
    
    We claim that $Z\cap K(\xi-t\{Z\})\neq\varnothing$ for any $t\in[0,\nu_{\max}(\xi,Z))$. Indeed, for any small $\varepsilon>0$, one can find $T\in\Omega(X,\xi)$ such that $\nu(T,Z)=\nu_{\max}(\xi,Z)-\varepsilon$. And also, due to $Z\cap K(\xi)\neq\varnothing$, there exists $S\in\Omega(X,\xi)$ with $\nu(S,Z)=0$. Considering linear interpolation between $S$ and $T$ shows that for any $t\in(0,\nu_{\max}(\xi,Z))$ one can find a K\"ahler current $G\in\mathcal{Z}_+(X,\xi)$ so that $\nu(G,Z)=t$. Then Siu's decomposition implies that $G-t[Z]$ is a K\"ahler current with zero Lelong number along $Z$, showing that $Z\cap K(\xi-t\{Z\})\neq\varnothing$.
    
    On the other hand we have $\vol(\xi-t\{Z\})\searrow0$ as $t\nearrow\nu_{\max}(\xi,Z)$. If this were not the case, then $\xi-\nu_{\max}(\xi,Z)\{Z\}$ would have to be a big class. For small $\varepsilon>0$, $\xi-(\nu_{\max}+\varepsilon)(\xi,Z)\{Z\}$ must also be big, and contains a K\"ahler current $Q$ with gentle analytic singularities. Choosing $\varepsilon$ so that $\nu_{\max}+\varepsilon\in\QQ$, then $Q+(\nu_{\max}+\varepsilon)[Z]$ is an element in $\Omega(X,\xi)$ with Lelong number larger than $\nu_{\max}(\xi,Z)$ along $Z$, a contradiction.

    Now for any $c\in(0,\nu_{\max}(\xi,Z))$, we deduce from the above that
    \[
    \vol(\xi)-\vol(\xi-c\{Z\})=n\int_0^c\vol_{X|Z}(\xi-t\{Z\})\,\mathrm dt,
    \]
    where the integrand is continuous by \cite[Corollary 4.11]{Mat13}. Sending $c\nearrow \nu_{\max}(\xi,Z)$ we conclude the second point.
\end{proof}

\section{Transcendental Okounkov bodies}
\label{sec:construct-body}

\paragraph{The big case.} Let  $\xi$ a big $(1,1)$-class on $X$. The goal of this paragraph is to attach natural convex bodies (compact convex sets with non-empty interior) $\Delta(\xi)\subset\RR^n$ to $\xi$, which will be called the \emph{Okounkov bodies} of $\xi$. One starts with a flag 
\[
Y_\bullet=(X=Y_0\supset Y_1\supset \dots\supset Y_{n-1}\supset Y_n=\{q\}),
\] 
where $Y_i$ is a smooth connected complex submanifold of $X$ with codimension $i$. We call such a flag a \emph{smooth flag} on $X$ centered at $q$.

However, it is possible that $X$ might not contain any proper submanifolds of positive dimension, let alone flags. As we have pointed out in the introduction, an easy remedy for this issue is to replace $X$ by a smooth model $X'$ and then one can construct a flag $Y_\bullet$ on $X'$. So after possibly modifying $X$ and pulling back $\xi$, in what follows we always assume that $X$ contains a flag $Y_\bullet$, which is centered at the point $q\in Y_n$.

Next, using $Y_\bullet$ we shall associate an Okounkov body $\Delta(\xi)=\Delta_{Y_\bullet}(\xi)\subset\RR^n$ with $\xi$. The construction is essentially the same as in \cite{Den17}, apart from some minor modifications. 

Following \cite{LM09,Den17}, we define a valuative map $\nu: \mathcal{A}(X,\xi,q)\to \QQ_{\geq 0}^n$ by
\[
T\mapsto \nu(T)=\nu_{Y_\bullet}(T)=(\nu_1(T),\dots,\nu_n(T))
\]
as follows. First, set
\[
\nu_1(T)\coloneqq \nu(T,Y_1).
\]
Since the generic Lelong number of $T_1\coloneqq (T-\nu_1(T)[Y_1])$ along $Y_1$ is zero and this current has analytic singularities near $q$, its potential is not identically $-\infty$ on $Y_1$, hence   $T_1\coloneqq (T-\nu_1(T)[Y_1])|_{Y_1}$ is a positive current on $Y_1$, with analytic singularities near $q$.

Next put
\[
\nu_2(T)\coloneqq \nu(T_1,Y_2),
\]
and continue in this manner to define all the remaining values $\nu_i(T)\in\QQ_{\geq 0}$. 

Note that $\Omega(X,\xi)\subseteq\mathcal{A}(X,\xi,q)$. So the map $\nu$ is also defined on $\Omega(X,\xi)$.
Consider the image sets
\[
\nu(\Omega(X,\xi))\text{ and }\nu(\mathcal{A}(X,\xi,q)).
\]
These two are bounded sets in $\QQ^n_{\geq 0}$, since all the Lelong numbers appearing in the above construction are bounded by cohomological constants (see e.g. \cite[Lemma 4.3]{Den17}).
Moreover, they are convex sets in $\QQ_{\geq 0}^n$, since $\nu(tT_1+(1-t)T_2)=t\nu(T_1)+(1-t)\nu(T_2)$ for $T_1,T_2$ in $\Omega(X,\xi)$ (or in $\mathcal{A}(X,\xi,q)$) and $t\in[0,1]\cap\QQ$.

The following lemma plays a key role in this paper.
\begin{lemma}
\label{lem:omega-dense-in-gamma}
	For any $T\in\mathcal{A}(X,\xi,q)$, one can find a sequence $T_k\in\Omega(X,\xi)$ such that $\nu(T_k)\to\nu(T)$ in $\QQ_{\geq 0}^n$.
\end{lemma}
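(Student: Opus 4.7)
My plan is to approximate $T \in \mathcal A(X,\xi,q)$ by elements of $\Omega(X,\xi)$ in two stages. First I would K\"ahlerize $T$: pick any reference $R = \theta + \ddc\psi \in \Omega(X,\xi)$ (which exists by applying Theorem~\ref{thm:Dem-reg} to any K\"ahler current in the big class $\xi$), and for $\varepsilon \in (0,1]\cap\QQ$ set
\[
T_\varepsilon \coloneqq (1-\varepsilon)T + \varepsilon R.
\]
Since $\varepsilon R$ is a K\"ahler current and $T$ is positive, $T_\varepsilon$ is K\"ahler. Near $q$ both $T$ and $R$ have analytic singularities with rational exponents $c_T, c_R$; clearing a common denominator for $(1-\varepsilon)c_T$ and $\varepsilon c_R$ and expanding with the multinomial theorem (absorbing the bounded multinomial coefficients into the $O(1)$ remainder) shows that the combination has analytic singularities near $q$, of some rational type $(\mathcal J_\varepsilon, c_\varepsilon)$. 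Thus $T_\varepsilon \in \mathcal A(X,\xi,q)$ is a K\"ahler current. Since generic Lelong numbers are additive under sums and the subtraction--restriction step at each level of the flag is linear in the current, the iterated valuation $\nu$ is $\QQ$-linear on rational convex combinations in $\mathcal A(X,\xi',q)$, giving $\nu(T_\varepsilon) = (1-\varepsilon)\nu(T) + \varepsilon\nu(R)$, which converges to $\nu(T)$ as $\varepsilon\to 0^+$.

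Second, I would apply Demailly's regularization (Theorem~\ref{thm:Dem-reg}) to the K\"ahler current $T_\varepsilon$, producing a sequence $T_{\varepsilon,k} \in \Omega(X,\xi)$ with gentle analytic singularities of type $(\mathcal I(k\varphi_\varepsilon), 1/k)$, where $\varphi_\varepsilon$ is the potential of $T_\varepsilon$. It then suffices to show $\nu(T_{\varepsilon,k}) \to \nu(T_\varepsilon)$ as $k\to\infty$ and to pass to a diagonal sequence. The key input is the classical Brian\c{c}on--Skoda sandwich near $q$,
\[
\mathcal J_\varepsilon^{\lceil kc_\varepsilon\rceil} \subseteq \mathcal I(k\varphi_\varepsilon) \subseteq \overline{\mathcal J_\varepsilon^{\lceil kc_\varepsilon\rceil - n + 1}},
\]
valid for $k$ sufficiently large: the left inclusion comes from the fact that $h \in \mathcal J_\varepsilon^{\lceil kc_\varepsilon\rceil}$ forces $|h|^2\mathrm e^{-k\varphi_\varepsilon} \in L^\infty_{\mathrm{loc}}$ near $q$, and the right inclusion is Skoda's theorem. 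Since the iterated Lelong numbers $\nu_i$ depend only on the analytic singularity type (hence only on the integral closure of the local ideal), are monotone nonincreasing under ideal inclusion, and satisfy $\nu_i((\mathcal J^m,c)) = cm\,\nu_i((\mathcal J,1))$, the sandwich forces
\[
\tfrac{\lceil kc_\varepsilon\rceil - n + 1}{k}\, \nu_i((\mathcal J_\varepsilon,1)) \leq \nu_i(T_{\varepsilon,k}) \leq \tfrac{\lceil kc_\varepsilon\rceil}{k}\, \nu_i((\mathcal J_\varepsilon,1)),
\]
and both bounds tend to $c_\varepsilon\nu_i((\mathcal J_\varepsilon,1)) = \nu_i(T_\varepsilon)$ as $k\to\infty$. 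A diagonal sequence $T_{\varepsilon_k,k'(\varepsilon_k)}$ with $\varepsilon_k\to 0$ and $k'(\varepsilon_k)$ chosen sufficiently large then gives the desired sequence in $\Omega(X,\xi)$.

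The main technical obstacle is the convergence of iterated Lelong numbers in the second stage: mere weak convergence of currents, or convergence of potentials in $L^1$, does not control restrictions to the successive strata of the flag $Y_\bullet$, so a direct pass to the limit through the iterative construction of $\nu$ is delicate. Routing the argument through the Brian\c{c}on--Skoda sandwich reduces everything to algebraic ideal comparisons at $q$, where the monotonicity and scaling properties of $\nu_i$ force convergence without requiring any fine analysis of restrictions.
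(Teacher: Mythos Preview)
Your two-stage strategy matches the paper's exactly: first perturb $T$ to a K\"ahler current $T_\varepsilon$ via a rational convex combination with some $R\in\Omega(X,\xi)$, then apply Demailly regularization to $T_\varepsilon$ and control the flag valuation through Brian\c{c}on--Skoda near $q$. The first stage is fine, and your observation that $\nu$ is $\QQ$-affine on rational convex combinations is correct.

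The gap is in the second stage. You assert that each iterated Lelong number $\nu_i$ is \emph{monotone nonincreasing under ideal inclusion}, and use this together with the sandwich $\mathcal J_\varepsilon^{\lceil kc_\varepsilon\rceil}\subseteq\mathcal I(k\varphi_\varepsilon)\subseteq\overline{\mathcal J_\varepsilon^{\lceil kc_\varepsilon\rceil-n+1}}$ to squeeze each $\nu_i(T_{\varepsilon,k})$. But componentwise monotonicity of $\nu_i$ for $i\geq 2$ is false. Take $n=2$, the standard flag $Y_1=\{z_1=0\}\supset Y_2=\{0\}$, and $\mathcal I_1=(z_1)\subset\mathcal I_2=(z_1,z_2)$: one computes $\nu((\mathcal I_1,1))=(1,0)$ and $\nu((\mathcal I_2,1))=(0,1)$, so $\nu_2$ moves the wrong way. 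The flag valuation $\nu$ is only monotone for the \emph{lexicographic} order on $\QQ^n$, and a lex-interval between two nearby points of $\RR^n$ is not a small set, so the displayed two-sided bound on $\nu_i(T_{\varepsilon,k})$ is unjustified.

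The paper repairs exactly this point by using the \emph{equality} form of Brian\c{c}on--Skoda rather than a sandwich: with $m=\min(n-1,r-1)$ one has $\mathcal I((k+m)\varphi)_q=\mathcal J^k\cdot\mathcal I(m\varphi)_q$ for all large $k$. This yields, near $q$, an additive decomposition of the regularized potential,
\[
\varphi_{k+m}=\tfrac{k}{k+m}\,\varphi+\tfrac{1}{k+m}\,\psi+O(1),\qquad \psi\coloneqq\log\textstyle\sum_j|h_j|^2,\quad \{h_j\}\text{ generators of }\mathcal I(m\varphi)_q.
\]
Now one uses additivity of $\nu$ under \emph{sums of potentials} (which the subtract-and-restrict procedure does respect, unlike ideal inclusion) to get $\nu(T_{k+m})=\tfrac{k}{k+m}\nu(T)+\tfrac{1}{k+m}\nu(\theta+\ddc\psi)\to\nu(T)$. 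Replacing your inclusion sandwich by this product identity fixes the argument with no other changes needed.
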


\begin{proof}

First, we might as well assume that $T$ is a K\"ahler current. Indeed, one can take $S\in\Omega(X,\xi)$ and let $T_\varepsilon\coloneqq (1-\varepsilon)T+\varepsilon S$ for $\varepsilon\in(0,1)\cap\QQ$. So $T_\varepsilon$ is a K\"ahler current in $\mathcal{A}(X,\xi,q)$ and $\nu(T_\varepsilon)=(1-\varepsilon)\nu(T)+\varepsilon\nu(S)$ can be made arbitrarily close to $\nu(T)$.

	Next assume that $T=\theta+\ddc\varphi$ with $\varphi=c\log\sum_{i=1}^r|g_i|^2+O(1)$ near $q$, where $g_1,\dots,g_r\in\mathcal O_{X,q}$ and $c\in\QQ_{\geq 0}$. Let $\cJ\subseteq\mathcal O_{X,q}$ be the ideal generated by $g_1,\dots,g_r$. Up to rescaling the class $\xi$ we will assume that $c=1$ for simplicity. Let $T_k=\theta+\ddc\varphi_k$ be the sequence given by Theorem~\ref{thm:Dem-reg}. Near $q$, one can write $\varphi_k=\frac{1}{k}\log\sum_{l}|g_{kl}|^2+O(1)$ for some local generators $\{g_{kl}\}_l$ of $\mathcal I(k\varphi)_q$. Let $m\coloneqq \min\{n-1,r-1\}$. Then by the Brian\c{c}on--Skoda theorem (see \cite[Theorem 11.17]{Dem15}), the equality of ideals
	\[
 \cI ((k+m)\varphi)_q=\cJ^k\cdot\cI(m\varphi)_q
 \]
	 holds at $q$ for any large $k\in\NN$. Let $\{h_j\}$ be a set of generators of $\cI(m\varphi)_q$ and put $\psi\coloneqq \log\sum_j|h_j|^2$, which is a qpsh function defined near $q$. Then the above equality implies that
	 \[
  \varphi_{k+m}=\frac{k}{k+m}\varphi+\frac{1}{k+m}\psi+O(1)
  \]
	  holds near $q$. So sending $k\to\infty$ we find that $\nu(T_k)\to\nu(T)$, as desired.
\end{proof}

Consequently, we are led to the following definition (cf. \cite[Definition 4.12]{Den17}).
\begin{definition}
\label{def:Ok-body}
	The Okounkov body $\Delta(\xi)=\Delta_{Y_\bullet}(\xi)$ of $\xi$ is defined to be 
 \[
 \Delta(\xi)\coloneqq \overline{\nu(\Omega(X,\xi))}=\overline{\nu(\mathcal{A}(X,\xi))}=\overline{\nu(\mathcal{A}(X,\xi,q))}.
 \]
 Here the closure is taken with respect to the Euclidean topology of $\RR^n$.
\end{definition}

As in \cite[Proposition 4.13]{Den17}, we have the following properties for $\Delta(\xi)$:

\begin{proposition}
\label{prop:body-properties}
	The following properties hold:
	\begin{enumerate}
	    \item For $\lambda\in\RR_{>0}$, one has $\Delta(\lambda\xi)=\lambda\Delta(\xi)$;
	    \item $\Delta(\xi_1)+\Delta(\xi_2)\subseteq\Delta(\xi_1+\xi_2)$ for two big classes $\xi_1$ and $\xi_2$;
		\item $\Delta(\xi)=\bigcap_{\varepsilon>0}\Delta(\xi+\varepsilon\{\omega\})$ for any K\"ahler form $\omega$ on $X$.
	\end{enumerate}
\end{proposition}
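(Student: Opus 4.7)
I would prove the properties in the order (1) for rational $\lambda$, then (2), then (3), and finally (1) for irrational $\lambda$, using earlier parts to bootstrap the later ones. The rational case of (1) is immediate: scaling $T\mapsto \lambda T$ is a bijection $\Omega(X,\xi)\leftrightarrow \Omega(X,\lambda\xi)$, since gentle analytic singularities of type $(\mathcal I,c)$ are sent to those of type $(\mathcal I,\lambda c)$ with $\lambda c\in\QQ_{>0}$, and Lelong numbers scale linearly. For (2), given $T_i\in\Omega(X,\xi_i)$ with gentle analytic singularities of rational type $(\mathcal I_i,c_i)$, the sum $T_1+T_2$ is a K\"ahler current representing $\xi_1+\xi_2$ whose local potential $c_1\log\sum|f_j|^2+c_2\log\sum|g_k|^2+O(1)$ can, after passing to a common denominator $q$ for $c_1,c_2$, be rewritten as $(1/q)\log\sum_{j,k}|F_jG_k|^2+O(1)$ by multinomial expansion; hence $T_1+T_2\in \mathcal A(X,\xi_1+\xi_2,q)$. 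Generic Lelong numbers are additive along each step of the iterated restriction procedure, so $\nu(T_1+T_2)=\nu(T_1)+\nu(T_2)$. Lemma~\ref{lem:omega-dense-in-gamma} places this valuation in $\Delta(\xi_1+\xi_2)$, and boundedness of the Okounkov bodies gives $\overline A+\overline B\subseteq \overline{A+B}$, which yields Property (2).

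For Property (3), the inclusion $\subseteq$ is immediate because $T+\varepsilon\omega\in \Omega(X,\xi+\varepsilon\{\omega\})$ has the same Lelong numbers as $T\in\Omega(X,\xi)$ (smooth forms contribute nothing). The opposite inclusion is where I expect the main obstacle. Fix a reference K\"ahler current $S_0\in \Omega(X,\xi)$ with $S_0\geq \delta\omega$ for some rational $\delta>0$ (available because $\xi$ is big and by Theorem~\ref{thm:Dem-reg}). Given $v\in \bigcap_{\varepsilon>0}\Delta(\xi+\varepsilon\{\omega\})$, choose rational $\varepsilon_k\searrow 0$ and $T_k\in\Omega(X,\xi+\varepsilon_k\{\omega\})$ with $\nu(T_k)\to v$, and form the correction
\[
R_k := T_k + \tfrac{\varepsilon_k}{\delta}(S_0-\delta\omega).
\]
This is a sum of a K\"ahler current and a nonnegative current, hence itself a K\"ahler current, and class arithmetic places it in $(1+\varepsilon_k/\delta)\xi$. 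By the same common-denominator trick as in (2), $R_k$ has local analytic singularities near $q$, and its valuation equals $\nu(T_k)+(\varepsilon_k/\delta)\nu(S_0)\to v$. Rescaling $R_k$ by $1/(1+\varepsilon_k/\delta)\in\QQ$ (using the rational case of (1)) lands in $\mathcal A(X,\xi,q)$ with valuation converging to $v$, and Lemma~\ref{lem:omega-dense-in-gamma} finally yields $v\in \Delta(\xi)$.

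Finally, for Property (1) with $\lambda\in\RR_{>0}$ irrational, I would approximate by rationals $\mu_k\nearrow \lambda$. Property (2) gives $\mu_k\Delta(\xi)+\Delta((\lambda-\mu_k)\xi)\subseteq \Delta(\lambda\xi)$. The set $\Delta((\lambda-\mu_k)\xi)$ is nonempty (since $(\lambda-\mu_k)\xi$ is big, so admits Demailly approximations) and sits in a box of diameter $O(\lambda-\mu_k)$ by the cohomological bounds on Lelong numbers (\cite[Lemma 4.3]{Den17}). Picking any $w_k\in\Delta((\lambda-\mu_k)\xi)$ and $u\in\Delta(\xi)$, the vectors $\mu_k u+w_k\in\Delta(\lambda\xi)$ converge to $\lambda u$, so $\lambda u\in\Delta(\lambda\xi)$ by closedness; the reverse inclusion is analogous using $\mu'_k\searrow \lambda$ and the relation $\Delta(\lambda\xi)+\Delta((\mu'_k-\lambda)\xi)\subseteq \mu'_k\Delta(\xi)$. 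The crux of the whole argument is the correction trick in Property (3): it exploits the existence of $S_0\in\Omega(X,\xi)$ dominating a small multiple of $\omega$ to cancel the $\varepsilon\omega$-contribution of $T_k$ while preserving analytic singularities, at the cost of a mild inflation of the class that is remedied by rational scaling and the density provided by Lemma~\ref{lem:omega-dense-in-gamma}.
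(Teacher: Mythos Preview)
Your argument is correct and, for part~(3), takes a genuinely different route from the paper. The paper establishes the reverse inclusion in~(3) by a volume comparison: since $(1+\sqrt{\varepsilon})\xi-(\xi+\varepsilon\{\omega\})$ is big for small $\varepsilon$, parts~(1) and~(2) give $\Delta(\xi+\varepsilon\{\omega\})\subseteq (1+\sqrt{\varepsilon})\Delta(\xi)$, hence $\vol_{\RR^n}(\Delta(\xi+\varepsilon\{\omega\}))\to \vol_{\RR^n}(\Delta(\xi))$; the conclusion then requires the separately proved fact (Proposition~\ref{prop:delta-is-convex-body}) that $\Delta(\xi)$ has non-empty interior. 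Your correction trick $R_k=T_k+\tfrac{\varepsilon_k}{\delta}(S_0-\delta\omega)$ is instead a direct pointwise argument that lands each valuation vector in $\Delta(\xi)$ without appealing to volume or to the convex-body property, at the price of a slightly longer computation checking that $R_k$ retains analytic singularities. For part~(1) in the irrational case the paper merely asserts that one can find $T_i\in\Omega(X,\xi)$ with $\lambda\nu(T_i)\to\nu(T)$; your approximation via rational $\mu_k$ together with~(2) and the cohomological diameter bound makes this explicit and is a clean way to fill in that step.
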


\begin{proof}
	Point (1) is clear when $\lambda\in\QQ_{>0}$. If $\lambda\in\RR_{>0}$, it suffices to observe that, for any $T\in\Omega(\lambda\xi)$, one can find a sequence $T_i\in\Omega(X,\xi)$ so that $\lambda\nu(T_i)\to\nu(T)$. Point (2) is also clear. So it remains to argue (3).

	First, it is easy to see that $\Delta(\xi)\subseteq\Delta(\xi+\varepsilon\{\omega\})$. On the other hand, $(1+\sqrt{\varepsilon})\xi-
	(\xi+\varepsilon\{\omega\})$ is a big class for small $\varepsilon>0$. So from (1) and (2) we have that $(1+\sqrt{\varepsilon})^n\vol_{\RR^n}(\Delta(\xi))\geq\vol_{\RR^n}(\Delta(\xi+\varepsilon\{\omega\}))$. Thus, we get that 
 \[
 \vol_{\RR^n}(\Delta(\xi))=\lim_{\varepsilon\to 0}\vol_{\RR^n}(\Delta(\xi+\varepsilon\{\omega\})).
 \]
 So $\Delta(\xi)=\bigcap_{\varepsilon>0}\Delta(\xi+\varepsilon\{\omega\})$ follows (here we used that $\Delta(\xi)$ is a convex body, by Proposition~\ref{prop:delta-is-convex-body} below).
\end{proof}

\begin{proposition}
\label{prop:delta-is-convex-body}
	When $\xi$ is big, $\Delta(\xi)$ is a convex body in $\RR^n$, so $\vol_{\RR^n}(\Delta(\xi))>0$.
\end{proposition}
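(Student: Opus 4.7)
The set $\Delta(\xi)\subset\RR^n$ is closed by construction, convex (since $\nu$ is $\QQ$-affine on $\Omega(X,\xi)$), and bounded (because each $\nu_i(T)$ is controlled by an intersection-theoretic constant depending only on $\xi$ and the flag; cf.\ \cite[Lemma~4.3]{Den17}). The only non-trivial point is $\vol_{\RR^n}(\Delta(\xi))>0$. The plan is to first reduce to the case of a K\"ahler class, and then to exhibit $n+1$ affinely independent valuation vectors arising from modifications along the flag.

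The reduction goes as follows. Fix a K\"ahler form $\omega$ on $X$. Since the big cone is open, $\xi-\varepsilon\{\omega\}$ is big for all sufficiently small $\varepsilon>0$, and Proposition~\ref{prop:body-properties}(1)--(2) yields $\Delta(\xi)\supseteq\Delta(\xi-\varepsilon\{\omega\})+\varepsilon\,\Delta(\{\omega\})$. The left-hand summand is nonempty by Theorem~\ref{thm:Dem-reg}, so the Minkowski sum inherits positive Lebesgue measure from $\Delta(\{\omega\})$ as soon as the latter has it. Hence it suffices to prove $\vol_{\RR^n}(\Delta(\{\omega\}))>0$ for a K\"ahler class.

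To construct valuation vectors for $\{\omega\}$, set $R_0\coloneqq\omega$, so that $R_0\in\Omega(X,\{\omega\})$ and $\nu(R_0)=\vec 0$. For each $i\in\{1,\dots,n\}$, let $\pi_i\colon\tilde X_i\to X$ be the blow-up of $X$ along the smooth center $Y_i$ with exceptional divisor $E_i$ (for $i=1$ this is the identity and $E_1=Y_1$). By openness of the K\"ahler cone on $\tilde X_i$, the class $\pi_i^*\{\omega\}-c_i\{E_i\}$ is K\"ahler for all sufficiently small $c_i>0$ and thus contains a smooth K\"ahler form $\beta_i$. Setting $R_i\coloneqq\pi_{i,*}(\beta_i+c_i[E_i])$, Lemma~\ref{lem:push-forward-has-analy-sing} (applied with $\pi=\pi_i$, $D=c_iE_i$, $\beta=\beta_i$) gives $R_i\in\mathcal A(X,\{\omega\})$. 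Since $R_i$ is moreover a K\"ahler current---being the pushforward of a smooth K\"ahler form by a bimeromorphic map---one has $R_i\in\Omega(X,\{\omega\})$ after a harmless additional regularization via Theorem~\ref{thm:Dem-reg}.

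It remains to compute $\nu(R_i)$. By Lemma~\ref{lma:Zariskimain}, the pullback identity $\pi_i^*R_i=\beta_i+c_i[E_i]$ holds, so at every $\tilde p\in E_i$ the Lelong number of $\pi_i^*R_i$ equals $c_i$ (the smooth $\beta_i$ contributes nothing, and $[E_i]$ contributes exactly $c_i$). Combining this with the general inequality $\nu(\pi_i^*R_i,\tilde p)\geq\nu(R_i,\pi_i(\tilde p))$, the fact that $R_i$ is smooth on $X\setminus Y_i$, and positivity of the residual current $R_i-c_i[Y_i]$ (Siu's Lemma~\ref{lem:Siu-deomp-lem}), one shows by iterating along the flag that $R_i$ has Lelong number exactly $c_i$ at every point of $Y_i$ and that each intermediate restriction picks up zero contribution to the next valuation component. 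This yields $\nu_j(R_i)=c_i\delta_{ij}$, i.e.\ $\nu(R_i)=c_i\vec e_i$. Together with $\nu(R_0)=\vec 0$, this exhibits $n+1$ affinely independent points in $\Delta(\{\omega\})$, which therefore has non-empty interior and positive Lebesgue measure; combined with the reduction above, this gives $\vol_{\RR^n}(\Delta(\xi))>0$. The main delicate point is precisely this iterated Lelong-number bookkeeping along the flag: one must verify that $\nu(R_i,p)=c_i$ is constant throughout $Y_i$ (in particular along the deeper strata $Y_{i+1}\supset\cdots\supset Y_n$), so that the subtraction $T-c_i[Y_i]$ at the $i$-th step of the flag restriction leaves zero Lelong number on $Y_i$ and hence kills all subsequent contributions to the valuation vector.
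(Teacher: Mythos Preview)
Your approach is essentially the same as the paper's: reduce to the K\"ahler case and then produce currents with valuation vectors along the coordinate axes by creating analytic singularities along each $Y_i$. The paper does this in one line by invoking \cite[Lemma~2.1(i)]{DP04}, which directly furnishes qpsh functions $\psi_i$ with analytic singularities precisely along $Y_i$, so that $T_i\coloneqq\omega+\varepsilon\,\ddc\psi_i\in\Omega(X,\xi)$ satisfies $\nu(T_i)=\varepsilon e_i$. Your blow-up-and-pushforward construction is exactly the mechanism underlying that lemma, so the two proofs coincide at the level of ideas; the paper's version is just more economical.

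A few points where your write-up needs tightening. First, the remark that ``$R_i\in\Omega(X,\{\omega\})$ after a harmless additional regularization via Theorem~\ref{thm:Dem-reg}'' is misleading: Demailly regularization does perturb $\nu$, so it is not harmless in that sense. You do not need it anyway, since by Definition~\ref{def:Ok-body} one has $\Delta(\xi)=\overline{\nu(\mathcal{A}(X,\xi))}$, and Lemma~\ref{lem:push-forward-has-analy-sing} already gives $R_i\in\mathcal{A}(X,\{\omega\})$; that suffices to place $\nu(R_i)$ in the body. Second, the expression ``$R_i-c_i[Y_i]$'' and the appeal to Siu's lemma only make literal sense for $i=1$, where $Y_1$ is a divisor in $X$; for $i\geq 2$ the current $[Y_i]$ is not a $(1,1)$-current on $X$. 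What you mean is the subtraction at the $i$-th step of the flag iteration, namely $(R_i|_{Y_{i-1}})-c_i[Y_i]$ inside $Y_{i-1}$, and you should say so. Third, the verification $\nu(R_i)=c_i e_i$ is most transparent via the local model: in flag-adapted coordinates with $Y_j=\{z_1=\cdots=z_j=0\}$, the potential of $R_i$ is $c_i\log(|z_1|^2+\cdots+|z_i|^2)+O(1)$; successive restriction to $Y_1,\ldots,Y_{i-1}$ gives generic Lelong number zero at each step, the restriction to $Y_{i-1}$ has potential $c_i\log|z_i|^2$, yielding $\nu_i=c_i$, and after subtraction the remainder is bounded on $Y_i$. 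Your statement that ``$R_i$ has Lelong number exactly $c_i$ at every point of $Y_i$'' is true but is not what drives the computation; the relevant quantities are the \emph{generic} Lelong numbers of the successive restrictions.
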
 

This result is also from \cite{Den17}. Below we give a slightly  different proof.

\begin{proof}
We need to show that $\Delta(\xi)$ has non-empty interior.
Since $\xi$ can be written as the sum of a K\"ahler class and a big class, it suffices to show that $\Delta(\xi)$ has non-empty interior when $\xi$ is K\"ahler. 	
	
	So assume that $\xi$ is K\"ahler. Fix a K\"ahler form $\omega\in\xi$. We will argue that $\Delta(\xi)$ contains a small simplex near the origin. Clearly, $\Delta(\xi)$ contains the origin. Applying \cite[Lemma~2.1(i)]{DP04} to $Y_i$, we can find qpsh functions $\psi_i$ on $X$ for $1\leq i\leq n$ so that $\psi_i$ has analytic singularities precisely along $Y_i$. Choosing $\varepsilon>0$ small enough so that $T_i\coloneqq \omega+\varepsilon\ddc\psi_i$ are all elements in $\Omega(X,\xi)$. Then one has $\nu(T_i)=\varepsilon e_i\in\RR^n_{\geq 0}$, so that $\Delta(\xi)$ contains a simplex.	
\end{proof}

The following result could be of independent interest.

\begin{proposition}
	The map $\xi\mapsto\Delta(\xi)$ from the big cone of $X$ to $\RR^n$ is continuous with respect to the Hausdorff topology in $\RR^n$.
\end{proposition}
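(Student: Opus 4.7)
Let $\xi$ be a big class and let $\xi_k\to\xi$ be big classes (eventually so, since the big cone is open). Writing $\eta_k:=\xi_k-\xi\to 0$, I aim to show the two-sided Hausdorff-type inclusions
\[
\Delta(\xi_k)\subseteq\Delta(\xi)+B(0,\delta),\qquad \Delta(\xi)\subseteq\Delta(\xi_k)+B(0,\delta)
\]
for $k\gg 0$, where $B(0,\delta)\subset\RR^n$ denotes the Euclidean ball. Fix a K\"ahler class $\{\omega\}$ on $X$. The basic tool is the following \emph{monotonicity} consequence of Proposition~\ref{prop:body-properties}(2): if $\beta-\alpha$ is K\"ahler and $\alpha$ is big, then $\Delta(\alpha)\subseteq\Delta(\beta)$, since the K\"ahler form itself lies in $\Omega(X,\beta-\alpha)$ with vanishing valuation, so $0\in\Delta(\beta-\alpha)$.

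\textbf{Upper semicontinuity.} For $k$ large, $\varepsilon\{\omega\}-\eta_k$ is K\"ahler. The decomposition $\xi+\varepsilon\{\omega\}=\xi_k+(\varepsilon\{\omega\}-\eta_k)$ and the monotonicity give $\Delta(\xi_k)\subseteq\Delta(\xi+\varepsilon\{\omega\})$. By Proposition~\ref{prop:body-properties}(3), $\{\Delta(\xi+\varepsilon\{\omega\})\}_{\varepsilon>0}$ is a decreasing family of compact convex sets with intersection $\Delta(\xi)$; a standard argument (using Blaschke selection and closedness) shows this forces Hausdorff convergence $\Delta(\xi+\varepsilon\{\omega\})\to\Delta(\xi)$. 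Given $\delta>0$, choose $\varepsilon$ so that $\Delta(\xi+\varepsilon\{\omega\})\subseteq\Delta(\xi)+B(0,\delta)$ and then take $k$ large enough.

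\textbf{Lower semicontinuity.} Symmetrically, $\xi+\eta_k+\varepsilon\{\omega\}=\xi+(\eta_k+\varepsilon\{\omega\})$ with $\eta_k+\varepsilon\{\omega\}$ K\"ahler for $k$ large, so by monotonicity $\Delta(\xi)\subseteq\Delta(\xi_k+\varepsilon\{\omega\})$. I now transfer this to $\Delta(\xi_k)$ with uniform error, adapting the dilation trick from the proof of Proposition~\ref{prop:body-properties}(3) \emph{to $\xi_k$}. Pick $\rho>0$ with $\xi-\rho\{\omega\}$ big; then for $\varepsilon<\rho^2/4$ and $k$ large, the class $\zeta_k:=\sqrt\varepsilon\,\xi_k-\varepsilon\{\omega\}=\sqrt\varepsilon(\xi_k-\sqrt\varepsilon\{\omega\})$ is big. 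The identity
\[
(1+\sqrt\varepsilon)\xi_k=(\xi_k+\varepsilon\{\omega\})+\zeta_k
\]
together with Proposition~\ref{prop:body-properties}(1)--(2) yields
\[
\Delta(\xi_k+\varepsilon\{\omega\})+\Delta(\zeta_k)\subseteq(1+\sqrt\varepsilon)\Delta(\xi_k).
\]
Since $\sqrt\varepsilon\,\xi_k-\zeta_k=\varepsilon\{\omega\}$ is K\"ahler, monotonicity gives $\Delta(\zeta_k)\subseteq\sqrt\varepsilon\,\Delta(\xi_k)$. Applying Step 1 with a fixed $\varepsilon_0$ bounds $\Delta(\xi_k)$ uniformly in $k$ inside $\Delta(\xi+\varepsilon_0\{\omega\})$, so there is $R>0$ (independent of $k$) with $\Delta(\zeta_k)\subseteq B(0,\sqrt\varepsilon\,R)$ and $(1+\sqrt\varepsilon)\Delta(\xi_k)\subseteq\Delta(\xi_k)+B(0,\sqrt\varepsilon\,R)$. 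Picking any $a_k\in\Delta(\zeta_k)$ and combining,
\[
\Delta(\xi_k+\varepsilon\{\omega\})\subseteq(1+\sqrt\varepsilon)\Delta(\xi_k)-a_k\subseteq\Delta(\xi_k)+B(0,2\sqrt\varepsilon\,R).
\]
Chaining with the inclusion $\Delta(\xi)\subseteq\Delta(\xi_k+\varepsilon\{\omega\})$ gives $\Delta(\xi)\subseteq\Delta(\xi_k)+B(0,2\sqrt\varepsilon\,R)$, and taking $\varepsilon$ small then $k$ large concludes the proof.

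\textbf{Main obstacle.} The only non-trivial step is the lower bound: we naturally only obtain $\Delta(\xi)\subseteq\Delta(\xi_k+\varepsilon\{\omega\})$ from the additive property, and must shrink the perturbation back out \emph{with uniform error in $k$}. The dilation identity from the proof of Proposition~\ref{prop:body-properties}(3) does exactly this, and works uniformly in $k$ because $\xi$ lying in the open big cone provides a uniform K\"ahler slack $\rho$ that is inherited by $\xi_k$ for $k$ large.
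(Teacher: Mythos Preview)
Your proof is correct and rests on the same core mechanism as the paper's: the dilation identity $(1+\sqrt\varepsilon)\,\cdot = (\cdot+\varepsilon\{\omega\})+\big(\sqrt\varepsilon\,\cdot-\varepsilon\{\omega\}\big)$ combined with the subadditivity of Proposition~\ref{prop:body-properties}. The organization differs somewhat. The paper compares $\xi$ and $\xi'$ directly and symmetrically, showing that $(1+\|\xi-\xi'\|^{1/2})\xi-\xi'$ and $(1+\|\xi-\xi'\|^{1/2})\xi'-\xi$ are both big for $\|\xi-\xi'\|$ small, which yields mutual near-inclusions in one stroke. You instead route through the intermediate bodies $\Delta(\xi+\varepsilon\{\omega\})$ and $\Delta(\xi_k+\varepsilon\{\omega\})$, invoking Proposition~\ref{prop:body-properties}(3) for the upper bound and applying the dilation trick only on the lower side. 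Your version is slightly longer but has the virtue of making the monotonicity step explicit: you insist that the difference class be \emph{K\"ahler} (not merely big) so that a smooth form with vanishing valuation witnesses $0\in\Delta(\beta-\alpha)$, whence $\Delta(\alpha)\subseteq\Delta(\beta)$ literally. The paper's phrasing ``$(1+\sqrt\varepsilon)\xi-\xi'$ is big $\Rightarrow$ $\Delta(\xi')\subseteq(1+\sqrt\varepsilon)\Delta(\xi)$'' glosses over this point (one really gets an inclusion up to translation by a point of size $O(\sqrt\varepsilon)$), though the Hausdorff conclusion is unaffected. Both arguments are short and essentially equivalent.
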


\begin{proof}
 Choose smooth $(1,1)$-forms $\eta_1,\dots,\eta_r$ on $X$ so that the $\{\eta_i\}$ form a basis of $H^{1,1}(X,\RR)$. For any two big classes $\xi, \xi'\in H^{1,1}(X,\RR)$ write $\xi=(a_1,\dots,a_r)$ and $\xi'=(a_1',\dots,a_r')$ using this basis and let $\|\xi-\xi'\|\coloneqq \max_i|a_i-a_i'|$.
We wish to argue that the Hausdorff distance between $\Delta(\xi)$ and $\Delta(\xi')$ tends to zero as $\|\xi-\xi'\|\to 0$. 

Fix a K\"ahler current $T\in\xi$ and assume that $T\geq\omega$ for some K\"ahler form $\omega$ on $X$. Pick $C>1$ so that $-C\omega\leq\eta_i\leq C\omega$ holds for all $1\leq i\leq r$. Then $T'\coloneqq T+\sum_i(a_i'-a_i)\eta_i$ is a K\"ahler current in $\xi'$ as long as $\|\xi-\xi'\|<(Cr)^{-1}$. Moreover, we see that $(1+\|\xi-\xi'\|^{1/2})\xi-\xi'$ is big as well when $\|\xi-\xi'\|<(Cr)^{-2}$, since $(1+\|\xi-\xi'\|^{1/2})T-T'$ is a K\"ahler current in that class. This implies that $\Delta(\xi')\subseteq(1+\|\xi-\xi'\|^{1/2})\Delta(\xi)$ as long as $\|\xi-\xi'\|<(Cr)^{-2}$. On the other hand, observe that  $(1+\|\xi-\xi'\|^{1/2})T'-T$ is a K\"ahler current when $\|\xi-\xi'\|<(2Cr)^{-2}$, so that $\Delta(\xi)\subseteq(1+\|\xi-\xi'\|^{1/2})\Delta(\xi')$ when $\|\xi-\xi'\|<(2Cr)^{-2}$. 

The above discussion easily implies that the Hausdorff distance between $\Delta(\xi)$ and $\Delta(\xi')$ tends to zero as $\|\xi-\xi'\|\to0$. 
\end{proof}

\paragraph{The pseudoeffective case.} In case $\xi$ is only pseudoeffective and not big, there are different ways to associate a compact convex set with $\xi$ \cite{CHPW18}. In \cite[Definition 4.23]{Den17} Deng suggests the definition below that we will follow in this work. For a K\"ahler form $\omega$, let
\begin{equation}\label{eq: psef_Okounkov}\Delta(\xi) \coloneqq  \bigcap_{\varepsilon>0} \Delta(\xi + \varepsilon \{\omega\}).
\end{equation}
Due to Proposition~\ref{prop:body-properties}, the above definition does not depend on the choice of $\omega$. Another natural choice is 
\[
\tilde \Delta(\xi) \coloneqq \overline{\nu(\mathcal A(X,\xi))}.
\]
Since $\mathcal A(X,\xi) \subset \mathcal A(X,\xi + \varepsilon \{\omega\})$ we obtain that $\tilde \Delta(\xi) \subset \Delta(\xi)$. 
Notice that $\tilde \Delta(\xi)$ might be empty.

In case $\xi$ is pseudoeffective and non-big, we will establish later that $\vol_{\mathbb R^n} \Delta(\xi) =0 = \vol(\xi)$, so this will imply that $\vol_{\mathbb R^n} \tilde \Delta(\xi) =0 = \vol(\xi)$.

In particular, Theorem~\ref{thm:main} will remain valid, regardless what definition one chooses for the convex compact sets associated to pseudoeffective (non-big) classes.

\paragraph{The conjectured volume identity.}  Let us return to a big class $\xi$. When $\xi$ is in the N\'eron--Severi space it was shown in  \cite{Den17} that $\vol(\xi)=n!\vol_{\RR^{n}}(\Delta(\xi))$. For a transcendental big class $\xi$, following \cite[Conjecture~1.4]{Den17}, we expect the same:

\begin{namedtheorem}[$A_n$]
\label{conj:vol-equality}
Assume that $\dim X=n$ and $\xi$ is big. Then $\vol(\xi)=n!\vol_{\RR^{n}}(\Delta(\xi))$.
\end{namedtheorem}
Note that the ensemble of Property~\hyperref[conj:vol-equality]{$A_n$}'s for all $n\geq 1$ is equivalent to Theorem~\ref{thm:main}.

It is easy to establish this when $n=1$. We recall its proof for the reader's convenience.

\begin{lemma}\label{lem: A_1}Property~\hyperref[conj:vol-equality]{$A_{1}$} holds.
\end{lemma}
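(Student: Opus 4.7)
The plan is to exploit the fact that on a compact Riemann surface the transcendental big cone coincides with the Kähler cone (since $\dim_{\mathbb{R}} H^{1,1}(X,\mathbb{R}) = 1$ and bigness amounts to positive degree) and to identify the Okounkov body explicitly as a closed interval. The flag here is simply $Y_\bullet = (X \supset \{q\})$, so $\nu(T) = \nu(T,q)$ is just the Lelong number at $q$.

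First I would observe that for every $T \in \Omega(X,\xi)$ one has $\nu(T,q) \leq \int_X T = \xi \cdot [X] = \deg \xi$, since the atom $\nu(T,q)\,[q]$ is part of the Siu decomposition of the positive measure $T$. Combined with the fact (for $n=1$, $T$ has gentle analytic singularities at finitely many points $p_i$ with masses $\nu(T,p_i)$, and the non-pluripolar part is $T - \sum_i \nu(T,p_i)\,[p_i]$) this gives
\[
\vol(\xi) = \sup_{T \in \Omega(X,\xi)} \Bigl(\deg \xi - \sum_i \nu(T,p_i)\Bigr) \leq \deg \xi,
\]
and the supremum equals $\deg \xi$ by taking $T$ to be a smooth Kähler form in $\xi$ (which exists since $\xi$ is big hence Kähler on the curve). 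Thus $\vol(\xi) = \deg \xi$ and $\Delta(\xi) \subseteq [0,\deg \xi]$.

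Next, for the reverse inclusion, for each rational $\lambda \in [0,\deg \xi)$ I would note that $\xi - \lambda\{[q]\}$ has positive degree, hence is Kähler on $X$, and therefore contains a smooth Kähler form $\omega_\lambda$. Setting $T_\lambda \coloneqq \lambda\,[q] + \omega_\lambda$ produces an element of $\Omega(X,\xi)$: it dominates $\omega_\lambda$ (so is a Kähler current), near $q$ it has potential $\lambda \log|z|^2 + O(1)$ with smooth remainder (hence gentle analytic singularities in the sense of Definition~\ref{def:analy-sing}, with the blow-up resolution being trivial), and $\nu(T_\lambda,q) = \lambda$. Taking closure gives $\Delta(\xi) \supseteq [0,\deg \xi]$.

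Combining these two inclusions yields $\Delta(\xi) = [0,\deg \xi]$, whence $\vol_{\mathbb{R}}(\Delta(\xi)) = \deg \xi = \vol(\xi)$, which is Property~\hyperref[conj:vol-equality]{$A_1$}. There is no real obstacle here; the argument is immediate once one knows that all big classes on a curve are Kähler, and the only slightly delicate point is to check that the explicit $T_\lambda$ lies in $\Omega(X,\xi)$ rather than merely in $\mathcal{A}(X,\xi,q)$, which is why one works with $\lambda < \deg \xi$ (to ensure the residual class is strictly Kähler) and then passes to the closure.
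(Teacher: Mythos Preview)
Your proof is correct and follows essentially the same approach as the paper: identify $\Delta(\xi)$ with the interval $[0,\deg\xi]$ by constructing currents $T_\lambda=\lambda[q]+\omega_\lambda$ for the lower bound and bounding Lelong numbers for the upper bound. The only minor difference is that you obtain the upper bound $\nu(T,q)\leq\deg\xi$ via the mass of the Siu decomposition, whereas the paper argues by contradiction (if $\nu(T,q)>\deg\xi$ then $\xi-\deg\xi\,\{q\}$ would be K\"ahler despite having degree zero); your restriction to rational $\lambda$ is also slightly more careful given the rationality requirement in Definition~\ref{def-loc-analy-sing}.
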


\begin{proof}
When $\dim X =1$, $H^{1,1}(X,\RR)\cong H^2(X,\RR)\cong\RR$. A class $\xi\in H^{1,1}(X,\RR)$ is K\"ahler if and only if $\int_X\xi>0$. Moreover, in this setting every big class $\xi =\{\theta\}$ is a K\"ahler class, and a flag is simply a choice of point $Y_1 = X \supset Y_0 = \{q\}$.

For any $t\in(0,\int_X\theta)$, $\{\theta\}-t\{q\}$ the class is K\"ahler, which then contains a smooth K\"ahler form $\omega_t$. Then $\omega_t+t[q]\in\Omega(X,\{\theta\})$ and it has Lelong number $t$ at $q$. So $[0,\int_X\theta]\subseteq\Delta(\{\theta\})$. On the other hand, one has $\nu(T,q)\leq\int_X\theta$ for any $T\in\Omega(X,\{\theta\})$, otherwise $\{\theta\}-\int_X\theta\{q\}$ is a K\"ahler class, which is absurd. So we have $\Delta(\{\theta\})=[0,\int_X\theta]$.
\end{proof}

 The following sections aim to prove Property~\hyperref[conj:vol-equality]{$A_n$} for higher dimensions. It is notable that, by using the divisorial Zariski decomposition for surfaces, it was shown in \cite[Theorem 1.8]{Den17} that Property~\hyperref[conj:vol-equality]{$A_2$} holds as well.

\section{Lifting the flags}
In this section, we fix a smooth flag $Y_{\bullet}=(Y_0\supset \cdots \supset Y_n)$ on $X$.

If $\pi:Y\rightarrow X$ is a proper bimeromorphic morphism of complex manifolds and $Z$ is an analytic set in $X$ not contained in the exceptional locus $E$, then we define the strict transform of $Z$ in $Y$ as the closure of $\pi^{-1}Z\setminus \pi^{-1}E$ in $Y$. This is an analytic set in $Y$ by \cite[Corollary~5.4]{Dem12b}. When $\pi$ is a modification, this coincides with the notion defined in \cite[Section~3, strict transforms]{BM97} by \cite[Remark~3.15]{BM97}.

\begin{definition}
    Let $\pi:Z\rightarrow X$ be a proper bimeromorphic morphism with $Z$ being a compact K\"ahler manifold. We say that a smooth flag $W_{\bullet}=(W_0\supset \cdots \supset W_n)$ is a \emph{lifting} of $Y_{\bullet}$ to $Z$ if  the restriction of $\pi$ to $W_i\rightarrow Y_i$ is defined and bimeromorphic. 
\end{definition}

\begin{lemma}\label{lma:flagvaluationchange}
    Let $\pi:Z\rightarrow X$ be a proper bimeromorphic morphism with $Z$ being a K\"ahler manifold. Assume that there is a lifting $W_{\bullet}=(W_0\supset \cdots \supset W_n)$ of $Y_{\bullet}$ to $Z$. Then there is a matrix $g\in \mathrm{SL}_n(\mathbb{Z})$ of the form $I+N$ with $N$ being strictly upper triangular (namely, all entries on and below the diagonal are $0$) such that
    for any $T\in \mathcal{A}(X)$, we have
    \[
    \nu_{W_{\bullet}}(\pi^*T)= \nu_{Y_{\bullet}}(T)g.
    \]
\end{lemma}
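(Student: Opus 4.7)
The plan is to trace the iterative constructions of $\nu_{W_\bullet}(\pi^*T)$ and $\nu_{Y_\bullet}(T)$ side by side, and show that at each stage $\nu_{W_\bullet}(\pi^*T)$ differs from $\nu_{Y_\bullet}(T)$ by a correction coming from $\pi$-exceptional divisors that sits in a strictly upper-triangular pattern.

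Write $\pi_j\coloneqq \pi|_{W_j}\colon W_j\to Y_j$, which is bimeromorphic by hypothesis. The first observation is that $W_{j+1}\subset W_j$ is the strict transform of $Y_{j+1}\subset Y_j$ under $\pi_j$: indeed, it is an irreducible divisor of $W_j$ mapping bimeromorphically onto $Y_{j+1}$, so it must coincide with the non-exceptional component of $\pi_j^{-1}(Y_{j+1})$. In particular $W_{j+1}$ is not $\pi_j$-exceptional, and we can write
\[
\pi_j^*[Y_{j+1}]=[W_{j+1}]+D_j,
\]
where $D_j$ is a $\ZZ_{\geq 0}$-sum of $\pi_j$-exceptional prime divisors on $W_j$, depending only on $\pi$ and the flags. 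Let $T_k$ on $Y_k$ and $S_k$ on $W_k$ denote the currents produced by the iterative procedure defining $\nu_{Y_\bullet}(T)=(\nu_1,\dots,\nu_n)$ and $\nu_{W_\bullet}(\pi^*T)=(\mu_1,\dots,\mu_n)$.

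The core step is an induction on $k$ proving
\[
S_k=\pi_k^*T_k+R_k,
\]
where $R_k$ is a positive $(1,1)$-current on $W_k$ of the form $\sum_\alpha c_\alpha^{(k)}[G_\alpha^{(k)}]$, with $G_\alpha^{(k)}$ prime divisors on $W_k$ and each $c_\alpha^{(k)}$ a $\ZZ_{\geq 0}$-linear combination of $\nu_1(T),\dots,\nu_k(T)$ whose coefficients are determined purely by $\pi$ and the flags. Since $\pi_k$ is a local isomorphism near the generic point of $W_{k+1}$, one gets $\nu(\pi_k^*T_k,W_{k+1})=\nu_{k+1}(T)$, and $\nu(R_k,W_{k+1})=\sum_{G_\alpha^{(k)}=W_{k+1}}c_\alpha^{(k)}$ is a $\ZZ_{\geq 0}$-linear combination of $\nu_1(T),\dots,\nu_k(T)$; hence
\[
\mu_{k+1}=\nu_{k+1}(T)+\sum_{j\leq k}N_{j,k+1}\,\nu_j(T)
\]
with $N_{j,k+1}\in\ZZ_{\geq 0}$ independent of $T$. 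The inductive step is then obtained by rewriting
\[
S_k-\mu_{k+1}[W_{k+1}]=\pi_k^*\bigl(T_k-\nu_{k+1}(T)[Y_{k+1}]\bigr)+\nu_{k+1}(T)\,D_k+R_k',
\]
where $R_k'$ is $R_k$ with its $W_{k+1}$-components removed; each summand has zero generic Lelong number along $W_{k+1}$, so one may restrict to $W_{k+1}$. The first summand restricts to $\pi_{k+1}^*T_{k+1}$ by the local-isomorphism property of $\pi_k$ near the generic point of $W_{k+1}$, and the remaining two assemble into $R_{k+1}$, using that for any prime divisor $F\neq W_{k+1}$ on $W_k$ the restriction $[F]|_{W_{k+1}}$ is a $\ZZ_{\geq 0}$-combination of prime divisors on $W_{k+1}$.

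Assembling these identities, the matrix $g=I+N$ with entries $N_{j,k}$ above (and $N_{j,k}=0$ for $j\geq k$) is strictly upper triangular with non-negative integer entries, hence unipotent and in $\mathrm{SL}_n(\ZZ)$, and satisfies $\nu_{W_\bullet}(\pi^*T)=\nu_{Y_\bullet}(T)\,g$. The main technical point will be to verify that every iterated restriction makes sense as a positive current and that the coefficients $c_\alpha^{(k)}$ genuinely remain $\ZZ_{\geq 0}$-linear forms in $\nu_1(T),\dots,\nu_k(T)$ with $T$-independent coefficients; both follow from carefully tracking zero generic Lelong numbers through Siu's decomposition (Lemma~\ref{lem:Siu-deomp-lem}) and from the purely geometric origin of the divisors $D_j$ and the intersection currents $[F]|_{W_{k+1}}$.
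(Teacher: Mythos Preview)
Your argument is correct and follows essentially the same strategy as the paper's proof: both hinge on the decomposition $\pi_j^*[Y_{j+1}]=[W_{j+1}]+D_j$ and the compatibility $(\pi^*S)|_{W_1}=\Pi^*(S|_{Y_1})$, and both proceed inductively along the flag (the paper phrases this as induction on $\dim X$, you as a step-by-step induction on $k$, which amounts to the same thing). Your version is a bit more explicit and yields the extra information that the entries of $N$ are nonnegative integers, which the paper's compact block formula for $g$ does not make apparent.
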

Here the valuations of currents are considered as row vectors. For simplicity, we will also say $(W_{\bullet},g)$ is a \emph{lifting} of $Y_{\bullet}$ to $Z$.

Observe that matrices of the form $I+N$ are exactly the automorphisms of the ordered Abelian group $\mathbb{Z}^n$ with respect to the lexicographic order. So this result can be seen as a generalization of \cite[Theorem~2.9]{CFKL17}.
\begin{proof} We take induction on $n=\dim X$. The case $n=1$ is trivial. In general, assume that the result is proved when $\dim X = n-1$. 

For simplicity, we write $\nu=\nu_{Y_{\bullet}}$ and $\nu'=\nu_{W_{\bullet}}$.
Let $\mu$ (resp. $\mu'$) be the valuation of currents defined by $Y_1\supset \cdots \supset Y_n$ (resp. $W_1\supset \cdots \supset W_n$). Then we need to show that
    \[
    \begin{bmatrix}
     \nu'(\pi^*T)_1 &
     \mu'((\pi^*T-\nu'(\pi^*T)_1[W_1])|_{W_1})
    \end{bmatrix}
    \]
    equals
    \[
    \begin{bmatrix}
        \nu(T)_1 &
        \mu((T-\nu(T)_1[Y_1]))|_{Y_1}
    \end{bmatrix}g.
    \]
    Clearly
    \[
    \nu'(\pi^*T)_1=\nu(T)_1 \eqqcolon c.
    \]
By the inductive hypothesis, we can find $h\in \mathrm{SL}_{n-1}(\mathbb{Z})$ of the form $I$ plus a strictly upper triangular matrix such that
\begin{equation}\label{eq: ind_hypos}
        \mu'(\Pi^{*}(T-c[Y_1])|_{Y_1})=\mu((T-c[Y_1])|_{Y_1})h,
\end{equation}
    where $\Pi:W_1\rightarrow Y_1$ is the restriction of $\pi$. Observe that
    \begin{align*}
        \Pi^{*}(T-c[Y_1])|_{Y_1}&= (\pi^*(T-c[Y_1]))|_{W_1}\\ &=(\pi^*T-c[W_1])|_{W_1}+c(\pi^*[Y_1]-[W_1])|_{W_1}.
    \end{align*}
    So
    \[
    \mu'(\Pi^{*}(T-c[Y_1])|_{Y_1})=\mu'((\pi^*T-c[W_1])|_{W_1})+c\mu'((\pi^*[Y_1]-[W_1])|_{W_1}).
    \]
Combining the above with \eqref{eq: ind_hypos}, we see that the following choice of $g$ satisfies the requirements of the lemma:    \[
    g\coloneqq 
    \begin{bmatrix}
    1 & -\mu'((\pi^*[Y_1]-[W_1])|_{W_1})\\
    0 & h
    \end{bmatrix}.
    \]
\end{proof}

\begin{theorem}\label{thm:liftflaggeneral}
Assume that $\pi:Z\rightarrow X$ is a modification of $X$, then there is a modification $\pi':Z'\rightarrow Z$ such that the flag $Y_{\bullet}$ admits a lifting $W_{\bullet}$ to $Z'$. 
\end{theorem}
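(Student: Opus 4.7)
The plan is to argue by induction on $n = \dim X$, reducing the lifting problem for $Y_\bullet$ to the corresponding problem for the subflag $Y_1 \supset Y_2 \supset \cdots \supset Y_n$ inside $Y_1$. The base case $n = 1$ is trivial: $Y_1$ is a point, so one takes $Z' = Z$ and any $W_1 \in \pi^{-1}(Y_1)$. So assume the result for dimension $n-1$ and that $\dim X = n \geq 2$.

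The first task is to lift $Y_1$ as a smooth hypersurface on a modification of $Z$. I would begin with the strict transform $\tilde Y_1 \subset Z$, which is well-defined since $Y_1$ has codimension one, whereas the image in $X$ of the exceptional locus of $\pi$ has codimension $\geq 2$ (the centers of the smooth blowups composing $\pi$ all having codimension $\geq 2$). Applying embedded resolution in the K\"ahler category (as in \cite[Theorem~2.0.2]{Wlo09}) to $\tilde Y_1 \subset Z$ produces a modification $\rho_1: Z_1 \to Z$ whose strict transform $W_1$ of $\tilde Y_1$ is a smooth hypersurface, with $W_1 \to Y_1$ bimeromorphic. To upgrade this bimeromorphic morphism to a genuine modification of $Y_1$ in the strict sense of the paper, I would use Hironaka-type elimination of indeterminacies on $(W_1 \to Y_1)^{-1}$ to obtain a further modification $\mu: W_1^{\flat} \to W_1$ whose composition $W_1^{\flat} \to Y_1$ is a modification. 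Each blowup center appearing in $\mu$ is smooth in (the strict transform of) $W_1$, hence smooth in the ambient manifold as well, so I would perform the same blowups on $Z_1$; by the classical identity that the strict transform of $W$ in $\mathrm{Bl}_C M$ is $\mathrm{Bl}_C W$ whenever $C \subset W \subset M$ are all smooth, this yields a modification $Z_2 \to Z_1$ whose strict transform of $W_1$ is precisely $W_1^{\flat}$.

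With this setup, I would invoke the inductive hypothesis for the modification $W_1^{\flat} \to Y_1$ of the $(n-1)$-dimensional compact K\"ahler manifold $Y_1$, equipped with the flag $Y_1 \supset Y_2 \supset \cdots \supset Y_n$. This provides a further modification $\sigma: W_1^{\flat\flat} \to W_1^{\flat}$ together with a lifting flag $W_1^{\flat\flat} \supset W_2 \supset \cdots \supset W_n$. Extending $\sigma$ to the ambient $Z_2$ by iterating the same ambient-blowup-at-common-smooth-center construction gives the desired $\pi': Z' \to Z$, whose strict transform of $W_1^{\flat}$ recovers $W_1^{\flat\flat}$; setting $W_0 = Z'$ and $W_1 = W_1^{\flat\flat}$ produces the required lift $W_\bullet$, with every restriction $W_i \to Y_i$ bimeromorphic by construction. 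The main obstacle is this middle step: converting the bimeromorphic restriction $W_1 \to Y_1$ into an actual modification in the paper's sense without disrupting the just-achieved smoothness of $W_1$ inside $Z_1$. This is where the compatibility of ambient and subvariety blowups at a common smooth center, combined with the availability of Hironaka-style resolution and elimination of indeterminacy in the K\"ahler category, becomes indispensable.
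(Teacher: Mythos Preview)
Your overall strategy---induction on $n=\dim X$, lift $Y_1$ first as a smooth hypersurface, then recurse on the subflag inside $Y_1$, extending each modification of the hypersurface to the ambient via ``blow up the same smooth center in the ambient''---is a legitimate alternative to the paper's argument. The paper instead inducts on the index $i$ of the flag: having built a smooth partial flag $W_0\supset\cdots\supset W_i$ on a modification $Z_i\to Z$, it takes the (possibly singular) strict transform $W_{i+1}$ of $Y_{i+1}$ in $W_i$ and applies W{\l}odarczyk's \emph{canonical} embedded resolution of $(W_j,W_{i+1})$ for all $j\le i$ simultaneously. The functoriality of canonical resolution (the compatibility diagram \cite[Theorem~2.0.2(4)]{Wlo09}) then yields a chain of embeddings $W_i'\hookrightarrow\cdots\hookrightarrow W_0'$ with $W_{i+1}'$ smooth, so the ambient extension is automatic and the question of whether $W_i\to Y_i$ is a modification in the strict sense never arises.

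That last point is exactly where your argument has a gap. Elimination of indeterminacies applied to the bimeromorphic map $(W_1\to Y_1)^{-1}\colon Y_1\dashrightarrow W_1$ produces a modification of $Y_1$, not of $W_1$: you get $p\colon\tilde Y_1\to Y_1$ (a sequence of smooth blow-ups) together with a morphism $\tilde Y_1\to W_1$, but that second arrow is merely proper bimeromorphic. So you have not exhibited a $\mu\colon W_1^\flat\to W_1$ that is itself a composition of smooth blow-ups, and without that your ambient-extension trick does not apply. Arranging both legs $W_1^\flat\to W_1$ and $W_1^\flat\to Y_1$ to be modifications in the paper's sense is essentially a weak-factorization statement, much heavier than what you invoke.

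The clean fix within your framework is to drop step~2 entirely and instead strengthen the inductive hypothesis to allow $\pi$ to be any proper bimeromorphic morphism from a compact K\"ahler manifold (this is the generality the paper notes after the theorem anyway, via Hironaka's Chow lemma). Then $W_1\to Y_1$, being the restriction of $\pi\circ\rho_1$ to a smooth compact K\"ahler submanifold, is proper bimeromorphic, and the inductive hypothesis furnishes directly a modification $\sigma\colon W_1'\to W_1$ together with a lift of the subflag; your ambient-blow-up extension then goes through unchanged.
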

By Hironaka's Chow lemma \cite[Corollary~2]{Hir75}, we can relax the assumption to that $\pi$ is a proper bimeromorphic morphism from a reduced complex space $Z$. 

When $X$ is projective, this result is already known, see \cite[Theorem~2.9]{CFKL17} for example.

The proof below relies on the  embedded resolutions studied in \cite{BM97} and \cite{Wlo09}. Let $Z$ be an analytic subset of a compact complex manifold $M_1$, that in turn is a submanifold of a compact complex submanifold $M_2$. Due to \cite[Theorem~2.0.2(4)]{Wlo09} there exist canonical desingularizations $\sigma_1: M'_1 \to M_1$ and $\sigma_2: M'_2 \to M_2$ of $Z$ inside $M_1$ and $M_2$ respectively, so that a canonical embedding $M'_1 \hookrightarrow M'_2$ exists and  makes the following diagram commute:
\begin{equation}\label{eq: Wlod}
\begin{tikzcd}
M_1' \arrow[d,"\sigma_1"] \arrow[r, hook] & M_2' \arrow[d,"\sigma_2"] \\
M_1 \arrow[r, hook]            & M_2. 
\end{tikzcd}
\end{equation}
Moreover, $M'_1 \hookrightarrow M'_2$ maps the (desingularized) strict transforms of $Z$ onto each other.
\begin{proof}
We begin by setting $W_0=Z$. We will construct $W_i$ inductively for each $i$. Assume that for $0\leq i<n$ a smooth partial flag $W_0\supset\cdots \supset W_i$ has been constructed on a modification $\pi_i:Z_i\rightarrow Z$ so that $\pi\circ \pi_i$ restricts to bimeromorphic morphisms $W_j\rightarrow Y_j$ for each $j=0,\ldots,i$. 

By Zariski's main theorem, $W_i\rightarrow Y_i$ is an isomorphism outside a codimension $2$ subset of $Y_i$.
We let $W_{i+1}$ be the strict transform of $Y_{i+1}$ in $W_i$.
The problem is that $W_{i+1}$ is not necessarily smooth. 

We will further modify $Z_i$ and lift $W_1,\ldots, W_{i+1}$ in order to make the flag smooth. Take the  embedded resolution of $(W_{j},W_{i+1})$, say $W_j'\rightarrow W_j$ for each $j=0,\ldots,i$.

Due to \eqref{eq: Wlod}, we have  canonical embeddings 
$W_{i}'\hookrightarrow W_{i-1}'\hookrightarrow\cdots \hookrightarrow W_0'$
making the following diagram commutative:
\[
\begin{tikzcd}
W_i' \arrow[d] \arrow[r, hook] & W_{i-1}' \arrow[d] \arrow[r, hook] & \cdots \arrow[r, hook] \arrow[d, "\vdots", phantom] & W_0' \arrow[d] \\
W_i \arrow[r, hook]            & W_{i-1} \arrow[r, hook]            & \cdots \arrow[r, hook]                              & W_0           
\end{tikzcd}
\]
Let $W_{i+1}'$ be the strict transform of $W_{i+1}$ in  $W_i'$. It suffices to define $\pi_{i+1}$ as the morphism $W_0'\rightarrow Z_i\rightarrow Z$ and 
replace $W_0\supset \cdots\supset W_{i+1}$ by $W_0'\supset \cdots\supset W_{i+1}'$. 
\end{proof}

As a consequence, we have the following useful approximation result.

\begin{proposition}\label{prop:approx-push-forward-current} Let $\pi:Z\rightarrow X$ be a modification of $X$ and $\xi$ be a big class on $X$. Assume that $Y_{\bullet}$ admits a lifting $(W_{\bullet},g)$ to $Z$. For any  $T\in \Omega(Z,\pi^*\xi)$ we can find  K\"ahler currents $T_k\in \mathcal{A}(X,\xi)$ such that
    \begin{enumerate}
    \item $\lim_{k\to\infty}\nu_{Y_{\bullet}}(T_k)=\nu_{W_{\bullet}}(T) g^{-1}$;
    \item 
    $T_k\preceq \pi_*T$.    
    \end{enumerate} 
\end{proposition}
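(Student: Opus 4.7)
The plan is to construct $T_k$ as perturbations of $\pi_*T$, built by first resolving $T$ on a further modification of $Z$ where $W_\bullet$ also lifts, pushing forward using Lemma~\ref{lem:push-forward-has-analy-sing}, and finally correcting the cohomology class. Since $T\in\Omega(Z,\pi^*\xi)$ has gentle analytic singularities, Definition~\ref{def:analy-sing}(3) provides a modification $\sigma_0\colon \tilde Z_0\to Z$ with $\sigma_0^*T=\beta+[D]$, where $\beta$ is a smooth semipositive $(1,1)$-form and $D$ is an effective $\QQ$-divisor on $\tilde Z_0$. Applying Theorem~\ref{thm:liftflaggeneral} to a further modification, I may assume $W_\bullet$ lifts to a smooth flag $\tilde W_\bullet$ on $\tilde Z$; call the combined map $\sigma\colon\tilde Z\to Z$, set $\rho\coloneqq\pi\circ\sigma\colon\tilde Z\to X$, and let $h$, respectively $gh$, be the matrices produced by Lemma~\ref{lma:flagvaluationchange} for the lifts $(\tilde W_\bullet,\sigma)$ and $(\tilde W_\bullet,\rho)$.

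Because $\rho$ is a composition of blowups along smooth centers of the K\"ahler manifold $X$, a standard fact yields an effective $\rho$-exceptional $\QQ$-divisor $F$ on $\tilde Z$ such that $\rho^*\{\omega\}-\{F\}$ is K\"ahler; fix a K\"ahler form $\gamma$ representing it. For small rational $\varepsilon>0$, $\beta+\varepsilon\gamma$ is a smooth K\"ahler form in the class $\rho^*(\xi+\varepsilon\{\omega\})-\{D+\varepsilon F\}$, so $(\beta+\varepsilon\gamma)+[D+\varepsilon F]$ is a K\"ahler current on $\tilde Z$ in $\rho^*(\xi+\varepsilon\{\omega\})$. Lemma~\ref{lem:push-forward-has-analy-sing} then gives
\[
S_\varepsilon\coloneqq \rho_*\bigl((\beta+\varepsilon\gamma)+[D+\varepsilon F]\bigr)\in \cA(X,\xi+\varepsilon\{\omega\}),
\]
and using $\rho_*\sigma^*T=\pi_*T$ (Lemma~\ref{lma:Zariskimain}) this equals $\pi_*T+\varepsilon\rho_*(\gamma+[F])$. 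Now $\pi_*T$ is a K\"ahler current on $X$ (pushforward of a K\"ahler current under a modification), so for $\varepsilon$ sufficiently small, $T_\varepsilon\coloneqq S_\varepsilon-\varepsilon\omega$ is a K\"ahler current lying in $\cA(X,\xi)$ (subtracting a smooth form preserves analytic singularities). Writing $\rho_*(\gamma+[F])=\omega+\ddc\phi$ for a qpsh function $\phi$ on $X$, we get $T_\varepsilon=\pi_*T+\varepsilon\,\ddc\phi$; since $\phi$ is bounded above, the potential of $T_\varepsilon$ is dominated by that of $\pi_*T$ up to a constant, giving (2): $T_\varepsilon\preceq\pi_*T$.

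For (1), setting $T_k\coloneqq T_{1/k}$, I pull $T_\varepsilon$ back to $\tilde Z$ using Lemma~\ref{lma:Zariskimain}, obtaining
\[
\rho^*T_\varepsilon=\sigma^*T+\varepsilon\bigl(\gamma+[F]-\rho^*\omega\bigr).
\]
Because $F$ is $\rho$-exceptional while each $\tilde W_j$ maps bimeromorphically onto $Y_j$, no $\tilde W_j$ is a component of $\mathrm{supp}(F)$, so the iterated Lelong-number construction defining $\nu_{\tilde W_\bullet}$ applied to the perturbation $\varepsilon(\gamma+[F]-\rho^*\omega)$ contributes only an $O(\varepsilon)$ correction at each step; hence $\nu_{\tilde W_\bullet}(\rho^*T_\varepsilon)\to\nu_{\tilde W_\bullet}(\sigma^*T)$ as $\varepsilon\to 0$. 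Invoking Lemma~\ref{lma:flagvaluationchange} twice gives $\nu_{\tilde W_\bullet}(\rho^*T_\varepsilon)=\nu_{Y_\bullet}(T_\varepsilon)\,gh$ and $\nu_{\tilde W_\bullet}(\sigma^*T)=\nu_{W_\bullet}(T)\,h$, from which $\nu_{Y_\bullet}(T_\varepsilon)\to \nu_{W_\bullet}(T)\,g^{-1}$ follows. The main technical obstacle is the last convergence: after each iterated restriction of $\rho^*T_\varepsilon$ to the lifted flag, one must verify that the perturbation $F|_{\tilde W_j}$ (together with the smooth pieces) restricts to pieces whose Lelong numbers remain of order $\varepsilon$ along the entire flag; this requires careful bookkeeping of how $\rho$-exceptional components meet $\tilde W_j$ iteratively.
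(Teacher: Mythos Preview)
Your approach is essentially the same as the paper's: pass to a further modification where $T$ resolves as $\beta+[D]$ and the flag lifts, perturb so that the semipositive form becomes K\"ahler, push forward via Lemma~\ref{lem:push-forward-has-analy-sing}, and compare valuations through Lemma~\ref{lma:flagvaluationchange}. There are two differences worth noting.

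First, the paper avoids your class-correction step. Instead of taking $F$ $\rho$-exceptional with $\rho^*\{\omega\}-\{F\}$ K\"ahler, it takes $E$ exceptional for the map $\pi':Z'\to Z$ and a \emph{smooth} form $\eta\in\{E\}$; since $T$ is already a K\"ahler current on $Z$, $\beta-k^{-1}\eta$ is a K\"ahler form for $k\gg 0$, and $\beta_k+[D_k]\coloneqq(\beta-k^{-1}\eta)+[D+k^{-1}E]$ lies \emph{exactly} in $(\pi\circ\pi')^*\xi$. This gives $T_k\in\cA(X,\xi)$ directly, with no need to subtract $\varepsilon\omega$.

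Second, the ``technical obstacle'' you flag at the end is not an obstacle at all, and leaving it unresolved is the only genuine gap in your write-up. The valuation $\nu_{\tilde W_\bullet}$ is \emph{additive} on currents with analytic singularities and is unaffected by smooth perturbations (this is exactly the fact $\nu(S+T)=\nu(S)+\nu(T)$ used throughout the paper). Since $\gamma-\rho^*\omega$ is smooth, one has immediately
\[
\nu_{\tilde W_\bullet}(\rho^*T_\varepsilon)=\nu_{\tilde W_\bullet}\bigl(\sigma^*T+\varepsilon[F]\bigr)=\nu_{\tilde W_\bullet}(\sigma^*T)+\varepsilon\,\nu_{\tilde W_\bullet}([F]),
\]
and the convergence follows by letting $\varepsilon\to 0$. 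No iterated bookkeeping of how $F$ meets the $\tilde W_j$ is needed; the paper does the identical computation in one line. Once you invoke additivity here, your argument is complete and correct.
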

In particular, it follows that
\[
\Delta_{W_{\bullet}}(Z,\pi^*\xi)=\Delta_{Y_{\bullet}}(X,\xi)g.
\]
This is the transcendental generalization of the birational invariance of the classical Okounkov bodies.
\begin{proof}
    By Theorem~\ref{thm:liftflaggeneral} there exists a modification $\pi':Z'\rightarrow Z$ such that 
    \begin{enumerate}[(i)]
        \item $\pi'^*T=\beta+[D]$,
    where $\beta$ is a smooth semipositive $(1,1)$-form on $Z'$ and $D$ is an effective $\mathbb{Q}$-divisor;
    \item $Y_{\bullet}$ admits a lifting $(V_{\bullet},h)$ to $Z'$.
  \end{enumerate}

    Observe that $(V_{\bullet},g^{-1}h)$ is a lifting of $Y_{\bullet}$ to $Z'$.

    Note that $\pi'$ is a composition of blowups with smooth centers.
     By the proof of \cite[Theorem 3.5]{DP04}, one can find an effective exceptional divisor $E$ on $Z'$ and a smooth $(1,1)$-form $\eta$ in $\{E\}$ such that
     $\pi'^*T-\varepsilon\eta$ is a K\"ahler current for any small $\varepsilon>0$. This implies that $\beta-\varepsilon\eta$ is a smooth K\"ahler form. Let us put for $k\gg1$
     \[
     \beta_k\coloneqq \beta-k^{-1}\eta\text{ and }D_k\coloneqq D+k^{-1}E.
     \]
Then $\beta_k+[D_k]\in(\pi\circ\pi')^*\xi$ and by Lemma~\ref{lem:push-forward-has-analy-sing} one has $T_k\coloneqq (\pi\circ\pi')_*(\beta_k+[D_k])\in\mathcal{A}(X,\xi)$.

     We compute
     \begin{equation*}
         \begin{aligned}
             \nu_{Y_{\bullet}}(T_k)&=\nu_{V_{\bullet}}(\beta_k+k^{-1}[E]+[D])h\\
             &=\nu_{V_{\bullet}}(\pi'^*T)h+k^{-1}\nu_{V_{\bullet}}([E])h=\nu_{W_{\bullet}}(T)g+k^{-1}\nu_{V_{\bullet}}([E])h.
         \end{aligned}
     \end{equation*}
     Letting $k\to\infty$ we conclude the first point. The second point is clear from the construction of $T_k$.
\end{proof}

\section{Partial Okounkov bodies}
\label{sec:rel-Ok-body}

For this section let $\xi$ be a big class on $X$. Fix a smooth flag $Y_{\bullet}=(Y_0\supset \cdots \supset Y_n )$. We will show that  Property~\hyperref[conj:vol-equality]{$A_n$} holds with the help of partial Okounkov bodies, introduced and studied recently in \cite{Xia21} in case of big line bundles. 

\begin{definition}
\label{def:rel-Ok-body}
For $R\in\Omega(X,\xi)$, we set 
\[
\Omega(X,\xi;R)\coloneqq \left\{ T\in \Omega(X,\xi): T\preceq R\right\}.
\]
The \emph{partial Okounkov body} of $R$ is defined as the following closed convex subset of $\RR^n_{\geq 0}$:
\[
\Delta(\xi;R)=\Delta_{Y_\bullet}(\xi;R)\coloneqq \overline{\nu_{Y_\bullet}(\Omega(X,\xi;R))}.
\]
\end{definition}
We will fix $R\in\Omega(X,\xi)$ throughout this section.

\begin{proposition}
	The partial Okounkov body $\Delta(\xi;R)$ is a convex body in $\RR_{\geq 0}^n$.
\end{proposition}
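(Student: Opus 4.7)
The aim is to show that $\Delta(\xi;R)$ is bounded, convex, and has non-empty interior in $\RR^n_{\geq 0}$. Boundedness is automatic from the inclusion $\Omega(X,\xi;R)\subseteq\Omega(X,\xi)$, which yields $\Delta(\xi;R)\subseteq\Delta(\xi)$, already known to be bounded by Proposition~\ref{prop:delta-is-convex-body}.

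For convexity I would proceed as follows. Given $T_1,T_2\in\Omega(X,\xi;R)$ and $t\in(0,1)\cap\QQ$, the combination $S\coloneqq tT_1+(1-t)T_2$ is a K\"ahler current in $\xi$, satisfies $S\preceq R$ (since both $T_j\preceq R$), and $\nu(S)=t\nu(T_1)+(1-t)\nu(T_2)$ by additivity of the valuation. The point requiring care is that $S$ still has \emph{gentle} analytic singularities. Writing $\varphi_{T_k}$ locally as $c_k\log\sum_i|f_{k,i}|^2+h_k$ with $h_k$ smooth, and clearing denominators so that $tc_1=a/N$, $(1-t)c_2=b/N$ for integers $a,b,N>0$, a multinomial expansion of $\bigl(\sum_i|f_{1,i}|^2\bigr)^{a}\bigl(\sum_i|f_{2,i}|^2\bigr)^{b}$ rewrites $\varphi_S=\tfrac1N\log\sum_k|g_k|^2+h$ with $h$ smooth and $g_k$ monomials in the $f_{k,i}$. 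By Remark~\ref{rem: smooth_remainder} this suffices for gentleness, so $S\in\Omega(X,\xi;R)$, and the Euclidean closure of $\nu(\Omega(X,\xi;R))$ is then convex.

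For non-empty interior the strategy mirrors Proposition~\ref{prop:delta-is-convex-body}. Since $R$ is a K\"ahler current, fix a K\"ahler form $\omega$ with $R\geq\omega$. Using \cite[Lemma 2.1(i)]{DP04}, produce qpsh functions $\psi_1,\dots,\psi_n\leq 0$, where $\psi_i$ has gentle analytic singularities concentrated along $Y_i$, normalized so that $\omega+\varepsilon\ddc\psi_i\in\Omega(X,\{\omega\})$ and $\nu_{Y_\bullet}(\omega+\varepsilon\ddc\psi_i)=\varepsilon e_i$ for $\varepsilon>0$ small. Set $T_i\coloneqq R+\varepsilon\ddc\psi_i=(R-\omega)+(\omega+\varepsilon\ddc\psi_i)$; this is a K\"ahler current in $\xi$, and $T_i\preceq R$ because $\psi_i\leq 0$. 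Gentleness of $T_i$ follows by the same multinomial trick applied to $\varphi_R+\varepsilon\psi_i$, so $T_i\in\Omega(X,\xi;R)$. Tracking valuations coordinate by coordinate along the flag, and using that each $\psi_i$ is smooth (resp.\ has a prescribed local model) on the strata below $Y_i$, gives $\nu(T_i)=\nu(R)+\varepsilon e_i$. Consequently $\Delta(\xi;R)$ contains the $n$-simplex on $\nu(R),\nu(R)+\varepsilon e_1,\dots,\nu(R)+\varepsilon e_n$, which has non-empty interior.

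The main obstacle is ensuring that the rational convex combinations and the perturbations $R+\varepsilon\ddc\psi_i$ retain \emph{gentle} analytic singularities, not merely analytic ones; this is precisely what the multinomial expansion handles uniformly. A secondary technical point is arranging $\psi_i$ so that its singularities along the higher-codimension strata of the flag are tame enough that the induced valuation vector is exactly $\varepsilon e_i$, which can be done by prescribing the local model of $\psi_i$ along $Y_i$ and using boundedness off $Y_i$.
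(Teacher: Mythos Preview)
Your overall approach coincides with the paper's: both establish non-empty interior by showing $\Delta(\xi;R)\supseteq\nu(R)+\varepsilon\Delta(\{\omega\})$ for small rational $\varepsilon>0$. The paper states this inclusion in one line and then invokes Proposition~\ref{prop:delta-is-convex-body} for the simplex inside $\Delta(\{\omega\})$; you simply unpack that simplex explicitly via the $\psi_i$ from \cite{DP04}. Boundedness and convexity are treated the same way, though the paper does not spell out the latter at all (it is inherited verbatim from the discussion preceding Definition~\ref{def:Ok-body}).

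There is, however, a genuine slip in your gentleness argument. You write $\varphi_{T_k}=c_k\log\sum_i|f_{k,i}|^2+h_k$ ``with $h_k$ smooth,'' but condition~(2) of Definition~\ref{def:analy-sing} only guarantees that $h_k$ is \emph{bounded}; Remark~\ref{rem: smooth_remainder} explicitly warns that gentleness does \emph{not} force a smooth remainder for a given choice of local generators. Consequently you cannot conclude that the new remainder $h=th_1+(1-t)h_2$ is smooth, and your appeal to Remark~\ref{rem: smooth_remainder} breaks down. The fix is to verify conditions~(1) and~(3) of Definition~\ref{def:analy-sing} directly: for~(3), pass to a common modification on which $\pi^*\varphi_{T_k}=c_k\log|g_k|^2+\tilde h_k$ with $\tilde h_k$ smooth, so that $\pi^*\varphi_S=\tfrac1N\log|g_1^{\,a}g_2^{\,b}|^2+(\text{smooth})$; for~(1), observe that $e^{N\varphi_S}=(e^{\varphi_{T_1}/c_1})^{a}(e^{\varphi_{T_2}/c_2})^{b}$ is smooth since $a,b\in\ZZ_{>0}$. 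The multinomial expansion you describe then handles condition~(2). The same correction applies to the perturbations $\varphi_R+\varepsilon\psi_i$. With this patch your argument is complete; note that the paper itself simply takes closure of $\Omega$ under rational positive linear combinations for granted and never isolates this point.
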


\begin{proof}
	Since $R$ is a K\"ahler current, we may fix a small $\varepsilon>0$ such that $R-\varepsilon\omega$ is still a K\"ahler current. This implies that $\Delta(\xi;R)$ contains the set $\nu(R)+\varepsilon\Delta(\{\omega\})$. As we have seen in the proof of Proposition~\ref{prop:delta-is-convex-body}, the body $\Delta(\{\omega\})$ contains a simplex, hence so does $\Delta(\xi;R)$.
\end{proof}

\begin{example}\label{ex:Okoun_divplusbeta}
When $R=[D]+\beta$, where $D$ is a $\mathbb{Q}$-divisor on $X$ and $\beta\in \mathcal{Z}_+(X)$ is a current with bounded potentials, we have
\[
\Delta_{Y_\bullet}(\xi;R)=\Delta_{Y_\bullet}(\xi-\{D\})+\nu_{{Y_\bullet}}([D]).
\]
Indeed, suppose that $T\in \Omega(X,\xi-\{D\})$, then $T+[D]\in \Omega(X,\xi;R)$, so 
\[
\Delta_{Y_\bullet}(\xi;R)\supset\Delta_{Y_\bullet}(\xi-\{D\})+\nu_{{Y_\bullet}}([D]).
\]
On the other hand, if $T\in \Omega(X,\xi;R)$, then $T-[D]\in \mathcal{A}(X,\xi-\{D\})$, due to Lemma~\ref{lem:Siu-deomp-lem}. It follows from Lemma~\ref{lem:omega-dense-in-gamma} that $\nu(T)\in \Delta_{Y_\bullet}(\xi-\{D\})+\nu_{{Y_\bullet}}([D])$.
\end{example}

Let us put
\[
\mathcal{A}(X,\xi;R)\coloneqq \left\{T\in \mathcal{A}(X,\xi):T\preceq R \right\}
\]
It is helpful to know the following equivalent formulation of $\Delta(\xi;R)$.

\begin{lemma}\label{lma:pob_loc_analytic}
One has
\[
\Delta(\xi;R)=\overline{\nu(\mathcal{A}(X,\xi;R))}.
\]
\end{lemma}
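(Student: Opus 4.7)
The inclusion $\Delta(\xi;R) \subseteq \overline{\nu(\cA(X,\xi;R))}$ is immediate since $\Omega(X,\xi;R) \subseteq \cA(X,\xi;R)$, and passing to images under $\nu$ and then closures preserves containment. For the reverse inclusion, fix $T \in \cA(X,\xi;R)$; I aim to show $\nu(T) \in \Delta(\xi;R)$.

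The first step is to reduce to the case where $T$ is a K\"ahler current. For $\varepsilon \in (0,1)\cap\QQ$, set $T_\varepsilon \coloneqq (1-\varepsilon)T + \varepsilon R$. Since $R \in \Omega(X,\xi)$ is K\"ahler, so is $T_\varepsilon$. Choosing a common rational denominator for the analytic singularity data of $T$ and $R$ places $T_\varepsilon$ in $\cA(X,\xi)$. From $\varphi_T \leq \varphi_R + C$ one computes
\[
\varphi_{T_\varepsilon} = (1-\varepsilon)\varphi_T + \varepsilon\varphi_R \leq \varphi_R + (1-\varepsilon)C,
\]
so $T_\varepsilon \preceq R$; moreover $\nu(T_\varepsilon) = (1-\varepsilon)\nu(T) + \varepsilon\nu(R) \to \nu(T)$ as $\varepsilon \to 0^+$. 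Hence we may assume $T$ itself is a K\"ahler current in $\cA(X,\xi;R)$.

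Next I would adapt the strategy of Lemma~\ref{lem:omega-dense-in-gamma}, which combines Demailly's regularization (Theorem~\ref{thm:Dem-reg}) with the Brian\c{c}on--Skoda theorem to produce a sequence $T_k \in \Omega(X,\xi)$ of K\"ahler currents with gentle analytic singularities of type $(\cI(k\varphi_T), 1/k)$ satisfying $\nu(T_k) \to \nu(T)$. The difficulty is that Demailly's regularization yields $T_k$ \emph{less singular} than $T$ ($\varphi_T \leq \varphi_{T_k}$), so the required bound $T_k \preceq R$ is not automatic; this is the main obstacle. The hypothesis $\varphi_T \leq \varphi_R + C$ does yield the multiplier ideal inclusion $\cI(k\varphi_T) \subseteq \cI(k\varphi_R)$, and by the sub-mean-value/Bergman kernel estimates applied to the generators of $\cI(k\varphi_T)$ one obtains pointwise bounds of the form $(1/k)\log\sum_i|g_i|^2 \leq \varphi_R + o(1)$, giving $T_k \preceq R$ whenever the Lelong numbers of $T$ strictly exceed those of $R$ along every irreducible component of $E_+(R)$.

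To handle the remaining case where these Lelong numbers coincide and the Bergman estimate degenerates by an $n/k$ Skoda correction, I would fall back on a log resolution argument. By Theorem~\ref{thm:liftflaggeneral} choose a modification $\pi:Z\to X$ that lifts $Y_\bullet$ to a flag $W_\bullet$ and simultaneously resolves the singularities of $T$ and $R$, so that on $Z$ we have decompositions $\pi^*T = \beta_T + [D_T]$ and $\pi^*R = \beta_R + [D_R]$. The relation $\pi^*T \preceq \pi^*R$ forces $D_T \geq D_R$ as effective $\QQ$-divisors by a Riemann extension argument as in the proof of Lemma~\ref{lem:Siu-deomp-lem}, so the $\preceq$ bound is cleanly encoded divisorially. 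Following Proposition~\ref{prop:approx-push-forward-current}, one chooses an effective exceptional $E$ and smooth $\eta \in \{E\}$ such that $\beta_T - k^{-1}\eta$ is K\"ahler, and forms $\widetilde T_k^Z \coloneqq (\beta_T - k^{-1}\eta) + [D_T + k^{-1}E] \in \Omega(Z,\pi^*\xi)$, which satisfies $\widetilde T_k^Z \preceq \pi^*R$ since $D_T + k^{-1}E \geq D_R$. Combining this construction with the Demailly approach via a diagonal extraction, and translating the $W_\bullet$-valuations back to $Y_\bullet$-valuations by Lemma~\ref{lma:flagvaluationchange}, yields a sequence in $\Omega(X,\xi;R)$ whose valuations converge to $\nu(T)$. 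The recurring obstacle throughout is the compatibility of gentleness with the singularity bound $\preceq R$; the log resolution decouples them by separating divisorial and smooth contributions.
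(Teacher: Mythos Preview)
Your reduction to the K\"ahler current case and your identification of the central obstacle --- that Demailly's regularization produces currents \emph{less} singular than $T$, so the bound $T_k\preceq R$ is not automatic --- are both correct and match the paper exactly. You even isolate the right heuristic: the regularization preserves $\preceq R$ provided $T$ is ``strictly more singular'' than $R$. But you then split into two cases and propose a log-resolution fallback for the borderline case, and this is where the argument breaks.

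The construction $\widetilde T_k^Z=(\beta_T-k^{-1}\eta)+[D_T+k^{-1}E]$ does not land in $\Omega(Z,\pi^*\xi)$. Since $T$ has only analytic (not gentle) singularities, after principalization the form $\beta_T$ has merely \emph{bounded} local potentials, not smooth ones. Hence $\widetilde T_k^Z$ is a K\"ahler current with analytic singularities, and its pushforward (even granting Lemma~\ref{lem:push-forward-has-analy-sing}, whose hypothesis that $\beta$ be a smooth K\"ahler form you do not meet) lies only in $\mathcal{A}(X,\xi;R)$, not in $\Omega(X,\xi;R)$. You have traded one element of $\mathcal{A}(X,\xi;R)$ for a sequence in the same set, which is circular. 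The phrase ``combining this construction with the Demailly approach via a diagonal extraction'' does not resolve this: any subsequent Demailly regularization faces the identical obstacle you started with. (Routing through Lemma~\ref{lma:bim_inv_pob} is also circular, since that lemma invokes the present one.)

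The paper's fix is much simpler and avoids the case split entirely. Writing $R=\theta+\ddc u$ and $G=\theta+\ddc\varphi$ with $\varphi\le u$, one sets $\varphi_\varepsilon\coloneqq\varphi+\varepsilon u$ for small rational $\varepsilon>0$ --- an \emph{additive} perturbation by the potential of $R$, not a convex combination. Then $G_\varepsilon=\theta+\ddc\varphi_\varepsilon$ is still a K\"ahler current, and now $\varphi_\varepsilon\le(1+\varepsilon)u\eqqcolon u_\varepsilon$. Applying Demailly regularization to $G_\varepsilon$ and Brian\c{c}on--Skoda as in Lemma~\ref{lem:omega-dense-in-gamma}, but globally and relative to the analytic-singularity potential $u_\varepsilon$, yields
\[
\varphi_{\varepsilon,k+m}\le\frac{k}{k+m}\,u_\varepsilon+O(1)=\frac{k(1+\varepsilon)}{k+m}\,u+O(1).
\]
For $k$ large (depending on $\varepsilon$) the coefficient exceeds $1$, and since $u$ is bounded above this gives $\varphi_{\varepsilon,k}\le u+O(1)$, i.e.\ $G_{\varepsilon,k}\in\Omega(X,\xi;R)$. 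Letting $k\to\infty$ and then $\varepsilon\to 0$ recovers $\nu(G)\in\Delta(\xi;R)$. The additive $\varepsilon u$ term is exactly what manufactures the ``strict excess of singularities'' you were looking for, uniformly, so no separate borderline case is needed.
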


\begin{proof}
The proof uses similar strategy as in Lemma~\ref{lem:omega-dense-in-gamma}.
 It suffices to argue that $\nu(G)\in\Delta(\xi;R)$ for any $G\in\mathcal{A}(X,\xi;R)$.
We might as well assume that $G$ is a K\"ahler current, otherwise one can replace $G$ by $(1-\varepsilon)G+\varepsilon R$. 

Write $R=\theta+\ddc u$ and $G=\theta+\ddc \varphi$, with $\varphi\leq u$. Since they are both K\"ahler currents, we can assume that $R\geq\omega$ and $G\geq\omega$. For any sufficiently small rational number $\varepsilon>0$, put $u_\varepsilon\coloneqq (1+\varepsilon)u$ and $\varphi_\varepsilon\coloneqq \varphi+\varepsilon u$. Then one has
	$R_\varepsilon\coloneqq \theta+\ddc u_\varepsilon\geq\frac{1}{2}\omega$ and $G_\varepsilon\coloneqq \theta+\ddc \varphi_\varepsilon\geq\frac{1}{2}\omega$. Let $G_{\varepsilon,k}\coloneqq \theta+\ddc\varphi_{\varepsilon,k} $ be the regularization sequence of $G_\varepsilon$ given in Theorem~\ref{thm:Dem-reg}, which are K\"ahler currents with analytic singularities of type $(\mathcal I(k\varphi_\varepsilon),\frac{1}{k})$. Since $\mathcal I(k\varphi_\varepsilon)\subseteq\mathcal I(ku_\varepsilon)$ and $u_\varepsilon$ has analytic singularities, we can apply Brian\c{c}on--Skoda theorem as in the proof of Lemma~\ref{lem:omega-dense-in-gamma} to see that
	$
	\varphi_{\varepsilon,k+m}\leq\frac{k}{k+m}u_\varepsilon+O(1)
	$
	holds on $X$, where $m\in\NN$ is independent of $k$ (but it may depend on $\varepsilon$). Then for all sufficiently large $k$, one has $\varphi_{\varepsilon,k}\leq u+O(1)$, i.e., $G_{\varepsilon,k}\in\Omega(\xi,R)$. Moreover, by the proof of Lemma~\ref{lem:omega-dense-in-gamma}, $\nu_{Y_\bullet}(G_\varepsilon)=\lim_{k\to\infty}\nu_{Y_\bullet}(G_{\varepsilon,k})$, since $G_\varepsilon$ has analytic singularities; meanwhile, it is clear that $\nu_{Y_\bullet}(G)=\lim_{\varepsilon\to 0}\nu_{Y_\bullet}(G_\varepsilon)$. So the desired assertion follows.
\end{proof}

\begin{proposition}
\label{prop:rel-body-exhaust-body}
	One can find an increasing sequence $R_i\in\Omega(X,\xi)$ such that
	\begin{enumerate}
		\item $E_+(R_i)=E_{nK}(\xi)$;
		\item $\int_{X}R^n_i\nearrow\vol(\xi)$;
		\item $\Delta(\xi)=\overline{\bigcup_i\Delta(\xi;R_i)}$, so that $\vol_{\RR^n}(\Delta(\xi;R_i))\nearrow\vol_{\RR^n}(\Delta(\xi))$.
	\end{enumerate}
\end{proposition}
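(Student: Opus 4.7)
The plan is to weave three independent density sources into a single chain of currents that is increasing in the less-singular order, then check the three claims in turn. The central tool throughout will be Lemma~\ref{lem:sing-decrease}, iterated to dominate any finite collection of elements of $\Omega(X,\xi)$ by a single less singular element of $\Omega(X,\xi)$.

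First I would fix the three density inputs. By Lemma~\ref{lem:sing-decrease} together with the Noetherian-type property recorded just after its statement, pick one $R^{\star}\in\Omega(X,\xi)$ with $E_+(R^{\star})=E_{nK}(\xi)$. By the definition of $\vol(\xi)$, pick $S_i\in\Omega(X,\xi)$ with $\int_X S_i^n\nearrow\vol(\xi)$. Finally, since by Definition~\ref{def:Ok-body} the set $\nu(\Omega(X,\xi))$ is dense in $\Delta(\xi)$, fix a countable family $T^{(k)}\in\Omega(X,\xi)$ whose valuations $\{\nu(T^{(k)})\}_k$ are dense in $\Delta(\xi)$. Then I would build $R_i$ inductively: apply Lemma~\ref{lem:sing-decrease} iteratively to produce $R_i\in\Omega(X,\xi)$ less singular than each of $R_{i-1}$, $R^{\star}$, $S_i$, and $T^{(1)},\ldots,T^{(i)}$. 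The resulting sequence satisfies $R_{i-1}\preceq R_i$, so in particular $\Omega(X,\xi;R_i)\subseteq\Omega(X,\xi;R_{i+1})$ and the partial Okounkov bodies $\Delta(\xi;R_i)$ are nested.

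For the verification of (1), $R^{\star}\preceq R_i$ yields $E_+(R_i)\subseteq E_+(R^{\star})=E_{nK}(\xi)$, and the reverse inclusion is the definition of $E_{nK}$. For (2), monotonicity of the non-pluripolar Monge--Amp\`ere mass in the singularity type, combined with $S_i\preceq R_i$, gives $\int_X S_i^n\leq\int_X R_i^n\leq\vol(\xi)$, hence $\int_X R_i^n\nearrow\vol(\xi)$. For (3), the inclusion $\bigcup_i\Delta(\xi;R_i)\subseteq\Delta(\xi)$ is immediate; for the reverse, given $p\in\Delta(\xi)$ choose a subsequence with $\nu(T^{(k_j)})\to p$. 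By construction $T^{(k_j)}\preceq R_i$ for every $i\geq k_j$, so $\nu(T^{(k_j)})\in\Delta(\xi;R_i)$ for $i\geq k_j$, and hence $p\in\overline{\bigcup_i\Delta(\xi;R_i)}$. The consequent convergence of volumes $\vol_{\RR^n}(\Delta(\xi;R_i))\nearrow\vol_{\RR^n}(\Delta(\xi))$ follows from convexity and boundedness of the $\Delta(\xi;R_i)$, since the boundary of a bounded convex set in $\RR^n$ has Lebesgue measure zero.

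The argument is essentially bookkeeping; the only mild subtlety is arranging the three density mechanisms to coexist without interfering. This is handled for free by Lemma~\ref{lem:sing-decrease}, which can be iterated to absorb any finite collection of currents in $\Omega(X,\xi)$ into a single less singular common refinement in $\Omega(X,\xi)$, so that control over the non-K\"ahler locus (via $R^{\star}$), the approach to the full volume (via $S_i$), and the exhaustion of $\Delta(\xi)$ (via $T^{(k)}$) can be maintained simultaneously along the sequence.
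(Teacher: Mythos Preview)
Your proposal is correct and follows essentially the same approach as the paper: both build $R_i$ by iterating Lemma~\ref{lem:sing-decrease} to dominate a fixed current realizing $E_{nK}(\xi)$, a volume-maximizing sequence, and a sequence whose valuations fill out $\Delta(\xi)$. The only cosmetic difference is that the paper exploits the observation that $\nu(\Omega(X,\xi))\subseteq\QQ^n_{\geq 0}$ is already countable and simply enumerates all of it, whereas you select a countable dense family; both choices work.
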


\begin{proof}
	Since $\nu(\Omega(X,\xi))\subseteq\QQ_{\geq 0}^n$ is a countable set, we can write $\nu(\Omega(X,\xi))=\{a_1,a_2,\dots\}$. Pick $T_i\in\Omega(X,\xi)$ such that $\nu(T_i)=a_i$ and also pick a sequence $G_i\in\Omega(X,\xi)$ such that $\int_{X}G_i^n\to\vol(\xi)$. Choose $R_0\in\Omega(X,\xi)$ such that $E_+(R_0)=E_{nK}(\xi)$. Then applying Lemma~\ref{lem:sing-decrease} repeatedly, we choose $R_i\in\Omega(X,\xi)$ so that it is less singular than $T_i,G_i$ and $R_{i-1}$. So $R_i$ is an increasing sequence, which clearly satisfies (1) and (3). That (2) holds follows from the monotonicity of Monge--Amp\`ere masses, see \cite[Theorem~1.16]{BEGZ10} or \cite[Theorem~1.2]{WN19b}. 
\end{proof}

\begin{lemma}\label{lma:bim_inv_pob}
    Let $R\in\Omega(X,\xi)$. Take a modification $\pi:Z\rightarrow X$ such that the flag $Y_{\bullet}$ admits a lifting $(W_{\bullet},g)$ to $Z$ and 
    $\pi^*R=\beta+[D]$, where $\beta$ is a smooth semipositive form on $Z$ and $D$ is an effective $\QQ$-divisor. Then
    \begin{equation}\label{eq:DeltaxiR}
    \Delta_{Y_\bullet}(\xi;R)g=\Delta_{W_\bullet}(\{\beta\})+\nu_{W_\bullet}([D]).
    \end{equation}
\end{lemma}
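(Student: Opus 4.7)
The plan is to prove both inclusions, relying on the pullback-valuation formula of Lemma~\ref{lma:flagvaluationchange}, the Siu-type subtraction of Lemma~\ref{lem:Siu-deomp-lem}, and the push-forward approximation of Proposition~\ref{prop:approx-push-forward-current}. As a preliminary point, one should observe that $\{\beta\}$ is big, so $\Delta_{W_\bullet}(\{\beta\})$ is well-defined: as in the proof of Proposition~\ref{prop:approx-push-forward-current}, \cite[Theorem~3.5]{DP04} supplies an effective exceptional $\QQ$-divisor $E$ on $Z$ and a smooth form $\eta\in\{E\}$ with $\pi^*R-\varepsilon\eta=(\beta-\varepsilon\eta)+[D]$ a K\"ahler current; evaluating as smooth forms away from $\mathrm{supp}(D)$ forces $\beta-\varepsilon\eta$ itself to be a K\"ahler form, so $\{\beta\}=\{\beta-\varepsilon\eta\}+\varepsilon\{E\}$ is big.

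For the inclusion $\Delta_{Y_\bullet}(\xi;R)g\subseteq\Delta_{W_\bullet}(\{\beta\})+\nu_{W_\bullet}([D])$, I would take an arbitrary $T\in\mathcal A(X,\xi;R)$ (using Lemma~\ref{lma:pob_loc_analytic}) and pull it back to $Z$. By Lemma~\ref{lma:flagvaluationchange}, $\nu_{W_\bullet}(\pi^*T)=\nu_{Y_\bullet}(T)g$. Since $T\preceq R$ gives $\pi^*T\preceq\beta+[D]$, and $\beta$ is smooth, monotonicity of Lelong numbers implies that the generic Lelong number of $\pi^*T$ along each prime component of $D$ is at least its multiplicity. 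Iterated application of Lemma~\ref{lem:Siu-deomp-lem} then produces $S\coloneqq\pi^*T-[D]$, a positive current in $\{\beta\}$ retaining analytic singularities near the center of $W_\bullet$. Additivity of the valuation yields $\nu_{W_\bullet}(S)=\nu_{Y_\bullet}(T)g-\nu_{W_\bullet}([D])$, and $\nu_{W_\bullet}(S)\in\Delta_{W_\bullet}(\{\beta\})$ directly from Definition~\ref{def:Ok-body}, closing the first inclusion after taking closure over $T$.

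For the reverse inclusion, I would pick $S\in\Omega(Z,\{\beta\})$ and consider $S+[D]$. Up to rescaling the data in Definition~\ref{def:analy-sing} so that the rational coefficients of $D$ are compatible with the singularity type of $S$, this is a K\"ahler current in $\pi^*\xi$ with gentle analytic singularities, i.e., $S+[D]\in\Omega(Z,\pi^*\xi)$. Applying Proposition~\ref{prop:approx-push-forward-current} yields K\"ahler currents $T_k\in\mathcal A(X,\xi)$ with $\nu_{Y_\bullet}(T_k)\to(\nu_{W_\bullet}(S)+\nu_{W_\bullet}([D]))g^{-1}$ and $T_k\preceq\pi_*(S+[D])$. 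Any element of $\mathcal Z_+(Z,\{\beta\})$ has bounded relative potential on the compact $Z$, so $S\preceq\beta$; hence $S+[D]\preceq\beta+[D]=\pi^*R$, and applying $\pi_*$ via Lemma~\ref{lma:Zariskimain} gives $\pi_*(S+[D])\preceq R$. Thus $T_k\in\mathcal A(X,\xi;R)$, and Lemma~\ref{lma:pob_loc_analytic} places $\nu_{Y_\bullet}(T_k)\in\Delta_{Y_\bullet}(\xi;R)$. Passing to the limit, multiplying by $g$, and then closing over $S$ yields the reverse inclusion.

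The main delicate point is in the first inclusion's Siu subtraction step, where one must verify that the analytic-singularity structure of $\pi^*T$ survives after removing $[D]$ near the center of $W_\bullet$; this is exactly the local content of Lemma~\ref{lem:Siu-deomp-lem} applied prime-by-prime. The remaining pieces reduce to combining the existing machinery from Sections~3 and~\ref{sec:rel-Ok-body}.
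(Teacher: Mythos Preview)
Your proposal is correct and follows essentially the same route as the paper: Siu subtraction on the pullback for one inclusion, and Proposition~\ref{prop:approx-push-forward-current} applied to $S+[D]$ for the other. You are simply more explicit in two places where the paper is terse---the bigness of $\{\beta\}$ and the chain $T_k\preceq\pi_*(S+[D])\preceq R$ (the paper writes $T_k\preceq R$ directly)---and you work with $\mathcal A(X,\xi;R)$ rather than $\Omega(X,\xi;R)$ in the first inclusion, which is harmless by Lemma~\ref{lma:pob_loc_analytic}.
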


\begin{proof}
    
    For any $T\in\Omega(X,\xi;R)$, by Lemma~\ref{lem:Siu-deomp-lem} $\pi^*T-[D]$ is a positive current with analytic singularities in $\{\beta\}$. So one has
    \[
    \nu_{W_\bullet}(\pi^*T)=\nu_{W_\bullet}(\pi^*T-[D])+\nu_{W_\bullet}([D])\in\Delta_{W_\bullet}(\{\beta\})+\nu_{W_\bullet}([D]).
    \]
    Therefore, $\nu_{Y_\bullet}(T)\in \big(\Delta_{W_\bullet}(\{\beta\})+\nu_{W_\bullet}([D])\big)g^{-1}$,
    implying that
    \[
    \Delta_{Y_\bullet}(\xi;R)\subseteq \big(\Delta_{W_\bullet}(\{\beta\})+\nu_{W_\bullet}([D])\big)g^{-1}.
    \]

On the other hand, for any K\"ahler current $S\in\Omega(Z,\{\beta\})$, note that $S+[D]$ belongs to $\Omega(Z,\pi^*\xi)$ and is more singular than $\pi^*R$. 

By Proposition~\ref{prop:approx-push-forward-current}, there exist  K\"ahler currents $T_k\in \mathcal{A}(X,\xi)$ s.t. 
$ \lim_{k\to\infty} \nu_{Y_{\bullet}}(T_k)=\nu_{V_{\bullet}}(S+[D])g^{-1}$ and $T_k\preceq R$.

It follows from Lemma~\ref{lma:pob_loc_analytic} that
$\nu_{W_{\bullet}}(S+[D])\in \Delta_{Y_\bullet}(\xi;R)g$,
finishing the proof of \eqref{eq:DeltaxiR}:
\[
\big(\Delta_{W_\bullet}(\{\beta\})+\nu_{W_\bullet}([D])\big)g^{-1}\subseteq \Delta_{Y_\bullet}(\xi;R).
\]
\end{proof}

We hope to show that partial Okounkov bodies have the following property:

\begin{namedtheorem}[$B_n$]
\label{conj:relative-Oko-vol-equal}
    Assume that $\dim X=n$ and $\xi$ is big. For any $R\in\Omega(X,\xi)$ one has
\begin{equation*}
     \int_{X}R^n=n!\vol_{\RR^n}(\Delta(\xi;R)).
\end{equation*}
\end{namedtheorem}

\begin{lemma}
\label{lma:rel-vol-conj=vol-conj}
    Property~\hyperref[conj:vol-equality]{$A_n$} and Property~\hyperref[conj:relative-Oko-vol-equal]{$B_n$} are equivalent.
\end{lemma}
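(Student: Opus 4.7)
The plan is to prove equivalence in both directions, with one direction essentially an immediate application of the approximation result already established, and the other requiring a resolution of $R$ combined with the birational invariance of partial Okounkov bodies.

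For the direction $B_n \Rightarrow A_n$, I would invoke Proposition~\ref{prop:rel-body-exhaust-body} to pick an increasing sequence $R_i \in \Omega(X,\xi)$ satisfying $\int_X R_i^n \nearrow \vol(\xi)$ and $\vol_{\RR^n}(\Delta(\xi;R_i)) \nearrow \vol_{\RR^n}(\Delta(\xi))$. Applying Property~\hyperref[conj:relative-Oko-vol-equal]{$B_n$} to each $R_i$ gives $\int_X R_i^n = n!\vol_{\RR^n}(\Delta(\xi;R_i))$, and letting $i\to\infty$ yields Property~\hyperref[conj:vol-equality]{$A_n$}.

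For the reverse direction $A_n \Rightarrow B_n$, let $R \in \Omega(X,\xi)$. Using Definition~\ref{def:analy-sing}(3) to resolve the singularities of $R$, and then applying Theorem~\ref{thm:liftflaggeneral} to further modify so that $Y_\bullet$ lifts to a smooth flag $W_\bullet$, one obtains a modification $\pi : Z \to X$ together with a lifting $(W_\bullet,g)$ of the flag, such that
\[
\pi^* R = \beta + [D],
\]
where $\beta$ is a smooth semipositive $(1,1)$-form on $Z$ and $D$ is an effective $\QQ$-divisor. Since $\beta\geq 0$ is smooth, $\{\beta\}$ is a nef class. Moreover, non-pluripolar Monge--Amp\`ere masses are preserved by modifications and ignore the divisor part, so $\int_Z \beta^n = \int_X R^n$. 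Since $R$ is a K\"ahler current with gentle analytic singularities, $R \geq \omega$ on the smooth locus of $R$, forcing $\int_X R^n > 0$. A nef class with positive top self-intersection is big, so $\{\beta\}$ is big and $\vol(\{\beta\}) = \int_Z \beta^n$.

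I would then apply Property~\hyperref[conj:vol-equality]{$A_n$} on $(Z,\{\beta\})$ with the flag $W_\bullet$ to obtain
\[
\int_X R^n \;=\; \int_Z \beta^n \;=\; \vol(\{\beta\}) \;=\; n!\,\vol_{\RR^n}(\Delta_{W_\bullet}(\{\beta\})).
\]
By Lemma~\ref{lma:bim_inv_pob} one has $\Delta_{Y_\bullet}(\xi;R)\,g = \Delta_{W_\bullet}(\{\beta\}) + \nu_{W_\bullet}([D])$. Since translation preserves $\vol_{\RR^n}$ and $g \in \mathrm{SL}_n(\ZZ)$ has $|\det g|=1$, we deduce $\vol_{\RR^n}(\Delta_{W_\bullet}(\{\beta\})) = \vol_{\RR^n}(\Delta_{Y_\bullet}(\xi;R))$, completing the proof.

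I do not anticipate a major obstacle here: both directions are essentially routine given the tools developed earlier. The only point requiring some care is verifying that $\{\beta\}$ is big so that Property~\hyperref[conj:vol-equality]{$A_n$} applies on $Z$, which follows cleanly from the K\"ahler current hypothesis on $R$ combined with the preservation of non-pluripolar Monge--Amp\`ere masses under $\pi$.
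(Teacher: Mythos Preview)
Your proof is correct and follows essentially the same approach as the paper's: the direction $B_n \Rightarrow A_n$ is handled via Proposition~\ref{prop:rel-body-exhaust-body}, and the direction $A_n \Rightarrow B_n$ proceeds by resolving $R$ to $\pi^*R = \beta + [D]$ on a modification where the flag lifts, then applying Lemma~\ref{lma:bim_inv_pob} together with $\det g = 1$. Your treatment is in fact slightly more careful than the paper's, as you explicitly verify that $\{\beta\}$ is big (nef with positive top self-intersection) before invoking Property~$A_n$ on $Z$, a point the paper leaves implicit.
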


\begin{proof}
    That Property~\hyperref[conj:relative-Oko-vol-equal]{$B_n$} implies Property~\hyperref[conj:vol-equality]{$A_n$} is immediate thanks to Proposition~\ref{prop:rel-body-exhaust-body}. Let us prove the other direction.

    Fix $R\in\Omega(X,\xi)$. By Theorem~\ref{thm:liftflaggeneral}, there is a modification $\pi:Z\rightarrow X$ such that 
    \begin{enumerate}[(i)]
        \item $\pi^*R=\beta+[D]$, where $\beta$ is a smooth semipositive form on $Z$ and $D$ is an effective $\QQ$-divisor;
        \item The flag $Y_{\bullet}$ admits a lifting $(W_{\bullet},g)$ to $Z$.
    \end{enumerate}
    By Lemma~\ref{lma:bim_inv_pob}, $\Delta_{Y_\bullet}(\xi;R)g=\big(\Delta_{W_\bullet}(\{\beta\})+\nu_{W_\bullet}([D])\big)$. Since $\det g =1$, we obtain that:
    \begin{align*}
    \vol_{\RR^n}(\Delta_{Y_\bullet}(\xi;R))=\vol_{\RR^n}(\Delta_{Y_\bullet}(\xi;R)) \cdot \det g&=\vol_{\RR^n}(\Delta_{W_\bullet}(\{\beta\}))\\ &=\frac{1}{n!}\int_Z\beta^n\\ &=\frac{1}{n!}\int_{X}R^n.
    \end{align*}
\end{proof}

We shall see in \S \ref{sec:vol-eq} that  Property~\hyperref[conj:relative-Oko-vol-equal]{$B_n$} holds for all $n\geq1$, and so does ~\hyperref[conj:vol-equality]{$A_n$}.

\section{Extension of K\"ahler currents}
In order to prove Property~\hyperref[conj:vol-equality]{$A_n$}, the natural idea is to argue by induction on $n$. Then we need to investigate the K\"ahler currents on submanifolds and see if they can be extended to K\"ahler currents on the ambient space.
This motivates us to prove the following extension theorem, generalizing \cite[Theorem 1.1]{CT14}.

\begin{theorem}
\label{thm:CT-thm-refined'}(=Theorem~\ref{thm:CT-thm-refined}) Let $V\subseteq X$ be a connected positive dimensional compact complex submanifold of $(X,\omega)$. Let $T=\omega|_V+\ddc\varphi$ be a K\"ahler current on $V$. Assume that $\mathrm{e}^{\varphi}$ is a H\"older continuous function on $V$. Then one can find a K\"ahler current $\tilde T=\omega+\ddc\tilde \varphi$ on $X$ such that $\tilde \varphi|_V=\varphi$, i.e., $\tilde T$ extends $T$. Moreover, $\tilde\varphi$ is continuous on $X\setminus V$ and $\mathrm{e}^{\tilde \varphi}$ is H\"older continuous on $X$. In addition, if $\varphi$ has analytic singularities, then so does $\tilde\varphi$.
\end{theorem}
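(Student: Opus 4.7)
My plan is to follow the Richberg-style gluing strategy of \cite{CT14}, using the streamlined exposition of \cite[Section I.5.E]{Dem12b}. The argument has three steps: constructing a local extension around each point of $V$, gluing the local extensions on a neighborhood of $V$, and extending globally to $X$.

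For the \textbf{local step}, at each $p \in V$ I would choose a coordinate chart $U_p \subset X$ with coordinates $(z', z'')$ such that $V \cap U_p = \{z'' = 0\}$, where $z'$ has $\dim V$ components. The classical local extension $\varphi(z', 0) + A|z''|^2$ used in \cite{CT14} when $\varphi$ is bounded would have a cylindrical polar set whenever $\varphi = -\infty$ somewhere on $V$, so it would prevent $\tilde\varphi$ from being continuous on $X \setminus V$. I would therefore use a variant of the form
\[
\tilde\varphi_p(z', z'') \coloneqq \log\bigl(\mathrm{e}^{\varphi(z', 0)} + B|z''|^{2k}\bigr) + A|z''|^2
\]
for suitable $A, B > 0$ and $k \in \mathbb{Z}_{>0}$. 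On $V \cap U_p$ this restricts to $\varphi$; off $V$ the argument of the logarithm is strictly positive, so $\tilde\varphi_p$ is continuous on $U_p \setminus V$; and $\mathrm{e}^{\tilde\varphi_p} = (\mathrm{e}^{\varphi(z', 0)} + B|z''|^{2k}) \cdot \mathrm{e}^{A|z''|^2}$ is H\"older on $U_p$ as a product of a H\"older function and a smooth one. For $A$ large enough (depending on the K\"ahler current constant of $T$) one gets $\omega + \ddc \tilde\varphi_p \geq \varepsilon' \omega$ on $U_p$. If $\varphi$ has analytic singularities of type $(\mathcal{J}, c)$, then locally $\mathrm{e}^{\varphi(z', 0)} \sim \sum_i|f_i(z', 0)|^{2c}$, and after rational rescaling and binomial expansion the argument of $\log$ rewrites as a sum of squares of holomorphic functions, endowing $\tilde\varphi_p$ with analytic singularities via the standard $\log\sum|h_j|^2$ construction.

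For the \textbf{gluing step}, cover $V$ by finitely many such charts $U_{p_1}, \ldots, U_{p_N}$ with a nested inner cover $U'_i \Subset U_i$ still covering $V$. Apply Demailly's regularized max $M_\eta$ to the $\tilde\varphi_{p_i}$, exploiting the quantitative closeness of the $\tilde\varphi_{p_i}$ on overlaps (which follows from $\tilde\varphi_{p_i}|_V = \varphi = \tilde\varphi_{p_j}|_V$ together with the H\"older continuity of $\mathrm{e}^\varphi$). The resulting $\tilde\varphi_W$ on $W = \bigcup U'_i$ has $\omega + \ddc \tilde\varphi_W$ a K\"ahler current, $\tilde\varphi_W|_V = \varphi$, $\mathrm{e}^{\tilde\varphi_W}$ H\"older on $W$ (since $M_\eta$ differs from $\max$ by at most $\eta$ and $\mathrm{e}^{\max(a,b)} = \max(\mathrm{e}^a, \mathrm{e}^b)$), and analytic singularities preserved through Lemma~\ref{lem: elem_log_ineq}. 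To extend from $W$ to $X$, take a further regularized max of $\tilde\varphi_W$ with an auxiliary global $\omega$-psh function $\psi$ constructed so that $\psi$ dominates $\tilde\varphi_W$ by more than $\eta$ outside $W$ while $\tilde\varphi_W$ dominates $\psi$ by more than $\eta$ on a sub-neighborhood $W' \Subset W$ of $V$. When $\varphi$ has analytic singularities, $\psi$ is chosen with matching singularities along the polar set of $\varphi$ via Theorem~\ref{thm:Dem-reg} applied to a K\"ahler current in $\{\omega\}$ with prescribed vanishing along that set.

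The \textbf{main obstacle} I anticipate is the local step: verifying that $\log(\mathrm{e}^{\varphi(z',0)} + B|z''|^{2k})$ is $\omega$-plurisubharmonic and that it inherits analytic singularities from $\varphi$ with quantitative control. In the analytic case this is essentially for free by the $\log\sum_j|h_j|^2$ rewriting, which is both psh and analytic-singular. In the general H\"older setting, plurisubharmonicity of this logarithm is not automatic: I would expect to reduce to the analytic case by first approximating $\varphi$ via potentials with analytic singularities from Theorem~\ref{thm:Dem-reg}, proving the result for those, and then carefully passing to the limit so that both the K\"ahler current property and the H\"older continuity of $\mathrm{e}^{\tilde\varphi}$ survive the approximation. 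Identifying the right parameters $A$, $B$, $k$ to make the quantitative H\"older estimates work, and matching the gluing framework of \cite[Section I.5.E]{Dem12b} to obtain the simplifications the authors advertise over \cite{CT14}, is the central technical task.
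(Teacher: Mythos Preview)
Your approach shares the Richberg-style skeleton with the paper, but the local extension you propose has a defect that breaks the gluing step. You take $\tilde\varphi_p = \log(\mathrm{e}^{\varphi(z',0)} + B|z''|^{2k}) + A|z''|^2$ with $k \in \mathbb{Z}_{>0}$, so the transverse weight $|z''|^{2k}$ vanishes to order $2k \geq 2$ along $V$. The paper instead uses $\varphi_j = \log(\mathrm{e}^{\varphi(z')} + \mathrm{e}^{F}) + A\cdot\mathrm{dist}^2$, where $F$ is a \emph{global} qpsh potential (from \cite[Lemma~2.1]{DP04}) with $\mathrm{e}^F \sim \mathrm{dist}(\cdot,V)^c$ and the rational $c$ is chosen \emph{strictly smaller} than the H\"older exponent $\alpha$ of $\mathrm{e}^\varphi$. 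This ordering $c<\alpha$ is precisely what drives the key overlap estimate (Lemma~\ref{lem:phi>phi-eta}): one needs $(1-\mathrm{e}^{-\eta})\mathrm{e}^F \gtrsim \mathrm{dist}^c$ to dominate $|\mathrm{e}^{\varphi\circ p_i} - \mathrm{e}^{\varphi\circ p_j}| \lesssim \mathrm{dist}^\alpha$ near $V$. With your exponent $2k \geq 2 > \alpha$, the transverse term vanishes \emph{faster} than the H\"older error, so the analogue of Lemma~\ref{lem:phi>phi-eta} fails and your ``quantitative closeness on overlaps'' is false near points where $\varphi=-\infty$. A second problem is that your $|z''|$ is chart-dependent while the paper's $F$ is global; on overlaps you acquire an additional discrepancy $|z''_i|^{2k} - |z''_j|^{2k}$ of the \emph{same order} as the transverse weight itself, which does not become small as one approaches $V$.

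Your ``main obstacle'' is also misidentified. The $(1-\varepsilon)\omega$-plurisubharmonicity of $\log(\mathrm{e}^u + \mathrm{e}^v)$ for $u,v$ both $(1-2\varepsilon)\omega$-psh is immediate from \cite[Theorem~I.4.16]{Dem12b} once the $A\cdot\mathrm{dist}^2$ correction is in place; no approximation by analytic singularities is needed, and your proposed limit procedure would in any case struggle to preserve uniform H\"older constants and a uniform K\"ahler-current lower bound. Finally, the claim that in the analytic-singularity case ``binomial expansion'' rewrites the argument of the logarithm as a sum of squares of holomorphic functions is incorrect for non-integral $c$; the correct route, which the paper takes, is to note that $\log(\mathrm{e}^u+\mathrm{e}^v)$ has the same singularity type as $\max(u,v)$ and invoke Lemma~\ref{lem: elem_log_ineq}.
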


\begin{proof}
Thanks to \cite[Lemma 2.1]{DP04}, we can find a qpsh potential $F$ on $X$, such that $F|_V\equiv -\infty$, $F$ is smooth on $X\setminus V$ and $\omega+\ddc F$ is a K\"ahler current with analytic singularities precisely along $V$. Moreover, $\mathrm{e}^{F}: X\to \RR_{\geq0}$ is a H\"older continuous function.

Choose $\varepsilon>0$ small enough so that
\[
\omega|_V+\ddc\varphi\geq3\varepsilon\omega|_V\text{ and }
\omega+\ddc F\geq 3\varepsilon\omega.
\]

We also choose finitely many open coordinate balls $\{W_j\}_{1\leq j\leq r}$ covering $V$, such that on each $W_j$ there are local coordinates $(z_1,\dots,z_n)$ so that $V\cap W_j=\{z_1=\dots=z_{n-k}=0\}$, where $k=\dim V$. Write $z=(z_1,\dots,z_{n-k})$ and $z'=(z_{n-k+1},\dots,z_n)$.

Assume that $\mathrm{e}^{\varphi}$ is $\alpha$-H\"older continuous for some $\alpha>0$. Due to \cite[Lemma 2.1(i)]{DP04} we can rescale $F$ to arrange the existence of $c \in (0,\alpha)\cap \mathbb{Q}$ such that on $W_j$ we have
\begin{equation}\label{eq: F_local}
F(z) = \frac{c}{2} \log \sum_{i=1}^{n-k} |z_i|^2 + O(1).
\end{equation}

We define a function $\varphi_j$ on $W_j$ by
\begin{equation}
    \label{eq:def-phi-j}
    \varphi_j(z,z')\coloneqq \log(\mathrm{e}^{\varphi(z')}+\mathrm{e}^{F(z,z')})+A\cdot\mathrm{dist}((z,z'),V)^2,
\end{equation}
where  $\mathrm{dist}$ is the Riemannian distance. After shrinking $W_j$'s
 slightly, still preserving $V\subset\cup_jW_j$, we can fix $A>1$ sufficiently large so that $\varphi(z') + A\cdot\mathrm{dist}((z,z'),V)^2$ and $F(z,z') + A\cdot\mathrm{dist}((z,z'),V)^2$ are $(1 - 2\varepsilon)\omega$-psh (see \cite[\S 2]{NWZ24}). Then
 \[
 \omega+\ddc\varphi_j\geq 2\varepsilon\omega
 \]
 holds on $W_j$ for all $j$, due to \cite[Theorem I.4.16]{Dem12b}.

From \eqref{eq:def-phi-j} one sees that $\varphi_j$ is continuous on $W_j\setminus V$ and $\varphi_j|_V=\varphi$. If in addition $\varphi$ has analytic singularities, then due to Lemma~\ref{lem: elem_log_ineq},
the potential $\varphi_j$ has local analytic singularities on $W_j$ (in the sense of Definition~\ref{def-loc-analy-sing}), as $\varphi_j$ has the same singularity type as $\max(\varphi(z'),F(z,z'))$. 

Next we need to fix slightly smaller open  coordinate balls $W_j'\Subset U_j \Subset T_j \Subset W_j$ such that $\cup_j W'_j$ still covers $V$.

Let $f_j \in C^\infty(W_j)$, $f_j \geq 0$, compactly supported such that $\supp f_j \subset {W'_j}^c$ and $f_j > 1$ in a neighborhood of $\overline  {T_j \setminus U_j}$.

Let $\eta >0$ small, so that $\ddc \eta f_j \leq  \varepsilon \omega$. We introduce
\[
\tilde \varphi_j \coloneqq  \varphi_j - \eta f_j \in \PSH(W_j, (1-\varepsilon)\omega).
\]

Let  $W \coloneqq \cup_j T_j$ and for $x \in W$ we introduce the function
\begin{equation}\label{eq: psi_ext_def}
\psi(x) \coloneqq  \max_{ x \in T_l} \tilde \varphi_l(x).
\end{equation}
We will show that $\psi \in \PSH(G,(1-\varepsilon)\omega)$ for some neighborhood $G \subset W$ of $V$,  moreover $\psi|_V = \varphi$ and $\psi$ is continuous on $G\setminus V$.

For any $x \in W'_j$ let $I_x \subset \{1,\ldots,r \}$ be the collection of indices $l$ such that
$x \in \overline{T_l \setminus U_l}$.

Fixing $x \in W'_j$, there exists an open coordinate ball $B_x \subset W'_j$, centered at $x$  such that  for all $y \in B_x$ we have $I_y \subset I_x$.

After shrinking $B_x$ further, we can assume that $ \varphi_l -\eta \geq \tilde \varphi_l$ on $B_x$, for any $l \in I_x$. Indeed, this follows from the fact that $f_l > 1$ in a neighborhood of $\overline  {T_l \setminus U_l}$, containing $x$.

In case $x \in V \cap W'_j$, due to Lemma~\ref{lem:phi>phi-eta}, we can shrink $B_x\subset W'_j$ further so that
\begin{equation}\label{eq: open_ineq}
 \tilde \varphi_j(z) =\varphi_j(z)> \varphi_l(z) - \eta > \tilde \varphi_l(z), \ \ z \in B_x, \ l \in I_x.
\end{equation}

We denote by $J_x \subset \{1,\ldots, r\} \setminus I_x$  the collection of indices $l$ such that $T_l \cap B_x$ is non-empty. We claim that $B_x \subset T_l$ for any $l \in J_x$. If this were not the case then there would exist $y \in B_x$ such that $y \in \partial T_l \subset \overline{T_l \setminus U_l}$. Hence, $l \in I_y \subset I_x$, a contradiction. As a result, $J_x$ is also the subset of indices $l$ in $\{1,\ldots, r\} \setminus I_x$ such that $B_x \subset T_l$. This will be used crucially in \eqref{eq: psi_qpsh} below.

Let $B_{x_1}, \ldots, B_{x_p}$ be a covering of $V$ with $x_i \in W'_{j_i} \cap V$. We take 
\[
G \coloneqq  \bigcup_j B_{x_j}.
\]

Clearly, $\psi \geq \tilde \varphi_j = \varphi_j$ on $W_j'$. As $\varphi_l - \eta f_l \leq \varphi$ on $V \cap T_j$ for all $l$, we get $\psi = \varphi$ on $V$.

Revisiting \eqref{eq: psi_ext_def}, due to \eqref{eq: open_ineq}, on each $B_{x_i}$ we have

\begin{equation}\label{eq: psi_qpsh}
\psi|_{B_{x_i}} \coloneqq   \max \left\{\tilde \varphi_{j_i}|_{B_{x_i}}, \max_{k \in J_{x_i}}\tilde \varphi_k|_{B_{x_i}}\right\} = \max \left\{\varphi_{j_i}|_{B_{x_i}}, \max_{k \in J_{x_i}}\tilde \varphi_k|_{B_{x_i}}\right\}.
\end{equation}
Since the right-hand side above is clearly a $(1-\varepsilon)\omega$-psh function on $B_{x_i}$, we get that $\psi \in \PSH(G,(1-\varepsilon)\omega)$. Moreover, $\psi|_V=\varphi$ and $\psi$ is continuous on $G\setminus V$.

After slightly shrinking $G$, we can assume that $\psi$ is defined and continuous in a neighborhood of $\partial G$.
Let $k>0$ and introduce $\tilde \varphi \coloneqq  \max(\psi, F + k)$ on $G$ and $\tilde \varphi \coloneqq  F +k$ on $X \setminus G$. For $k$ big enough $\tilde \varphi \in \PSH(X,(1-\varepsilon)\omega)$. 

If in addition $\varphi$ has analytic singularities, then thanks to Lemma 
\ref{lem: elem_log_ineq} so does $\tilde\varphi$, since due to \eqref{eq: psi_qpsh} the potential $\tilde \varphi$ is locally the maximum of finitely many potentials with analytic singularities. Note also that the maximum operation preserves exponential H\"older continuity, so all the potentials involved in the proof are exponentially H\"older continuous. Therefore, the potential $\tilde\varphi$ satisfies all the required properties.
\end{proof}

As promised in the above argument, we deliver the following key lemma:

\begin{lemma}
\label{lem:phi>phi-eta} With the notation of  the above proof, the following property holds: for any $1\leq i,j\leq r$, any point $p\in W_i\cap W_j\cap V$ and $\eta>0$, there is an open neighborhood of $p$ on which $\varphi_j\geq\varphi_i-\eta$ holds.
\end{lemma}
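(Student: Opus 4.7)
The plan is to compute the difference $\varphi_j(q)-\varphi_i(q)$ on a neighborhood of $p$ contained in $W_i\cap W_j$, and show that it tends to $0$ as $q\to p$. Since the term $A\cdot\mathrm{dist}(\cdot,V)^2$ in \eqref{eq:def-phi-j} depends only on the intrinsic Riemannian distance to $V$, it is identical in $\varphi_i$ and $\varphi_j$ and cancels. Introducing the smooth retractions $\pi_\ell\colon W_\ell\to V\cap W_\ell$ which send $(z,z')\mapsto(0,z')$ in the chosen coordinates of $W_\ell$, the remaining difference reads
\[
\varphi_j(q)-\varphi_i(q)\;=\;\log\frac{\mathrm{e}^{\varphi(\pi_j(q))}+\mathrm{e}^{F(q)}}{\mathrm{e}^{\varphi(\pi_i(q))}+\mathrm{e}^{F(q)}},
\]
so it suffices to prove that this ratio tends to $1$ as $q\to p$.

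The decisive ingredient is that $c\in(0,\alpha)\cap\mathbb{Q}$ was chosen strictly less than the H\"older exponent $\alpha$ of $\mathrm{e}^{\varphi}$. First, since $\pi_i$ and $\pi_j$ are smooth and both equal the identity on $V$, the differentials $D\pi_i|_p$ and $D\pi_j|_p$ agree on $T_pV$, and a first-order Taylor expansion yields $\mathrm{dist}(\pi_i(q),\pi_j(q))\leq C_1\,\mathrm{dist}(q,V)$ in a small enough neighborhood of $p$. Combined with the $\alpha$-H\"older continuity of $\mathrm{e}^{\varphi}$ on $V$, this produces the numerator-difference estimate
\[
|\mathrm{e}^{\varphi(\pi_j(q))}-\mathrm{e}^{\varphi(\pi_i(q))}|\leq C_2\,\mathrm{dist}(q,V)^\alpha.
\]
On the other hand, the local expansion \eqref{eq: F_local} supplies the lower bound $\mathrm{e}^{F(q)}\geq c_0\,\mathrm{dist}(q,V)^c$ on a sufficiently small neighborhood of $p$.

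Combining the two estimates, the deviation of the ratio from $1$ is bounded above by $(C_2/c_0)\,\mathrm{dist}(q,V)^{\alpha-c}$, which tends to $0$ as $q\to p$ precisely because $\alpha>c$. Thus for any prescribed $\eta>0$ the ratio lies in $(\mathrm{e}^{-\eta},\mathrm{e}^{\eta})$ on a small enough neighborhood of $p$, yielding $\varphi_j\geq\varphi_i-\eta$ there as required. I expect the only step needing care to be the linear bound $\mathrm{dist}(\pi_i(q),\pi_j(q))\leq C_1\,\mathrm{dist}(q,V)$, which is a routine coordinate computation once one notes that $D\pi_j|_p-D\pi_i|_p$ vanishes on $T_pV$; everything else follows immediately from the choice $c<\alpha$, which is exactly why that choice was built into the construction.
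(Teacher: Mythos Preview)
Your proof is correct and follows essentially the same approach as the paper's: both hinge on the lower bound $\mathrm{e}^{F}\geq c_0\,\mathrm{dist}(\cdot,V)^c$ from \eqref{eq: F_local} together with the estimate $|\mathrm{e}^{\varphi\circ\pi_j}-\mathrm{e}^{\varphi\circ\pi_i}|\leq C\,\mathrm{dist}(\cdot,V)^{\alpha}$, and conclude via $c<\alpha$. The only cosmetic differences are that the paper rearranges the inequality $\varphi_j\geq\varphi_i-\eta$ directly rather than studying the ratio, and obtains the H\"older estimate by noting that $\mathrm{e}^{\varphi\circ\pi_j}-\mathrm{e}^{\varphi\circ\pi_i}$ is itself $\alpha$-H\"older and vanishes on $V$, whereas you insert the intermediate bound $\mathrm{dist}(\pi_i(q),\pi_j(q))\leq C_1\,\mathrm{dist}(q,V)$ (which follows simply because $\pi_i-\pi_j$ is smooth and vanishes on $V$).
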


\begin{proof} Let $p_i : W_i \to V$ and $p_j : W_j \to V$ be the coordinate projections onto  $V$ in each respective chart.
To show that  $\varphi_j\geq\varphi_i-\eta$ in a neighborhood of $p$, it is enough to show that in a neighborhood of $p$ we have
\[
\mathrm{e}^{ \varphi(p_j(x))}  + \mathrm{e}^{F(x)} \geq \mathrm{e}^{-\eta}\big(\mathrm{e}^{ \varphi(p_i(x))} + \mathrm{e}^{F(x)}\big).
\]
This in turn is equivalent to 
\begin{equation}\label{eq: extKey}
\big(1- \mathrm{e}^{-\eta} \big)\mathrm{e}^{F(x)} \geq \mathrm{e}^{- \eta}\mathrm{e}^{ \varphi(p_i(x))} - \mathrm{e}^{ \varphi(p_j(x))}.
\end{equation}
Since $\mathrm{e}^{-\eta}< 1$, we claim that in fact the following slightly stronger inequality holds, in a neighborhood of $p$:
\[
\big(1- \mathrm{e}^{-\eta} \big)\mathrm{e}^{F(x)} \geq \mathrm{e}^{- \eta}\big( \mathrm{e}^{ \varphi(p_i(x))} - \mathrm{e}^{ \varphi(p_j(x))}\big).
\]

Due to \eqref{eq: F_local} there exists  $D>0$ such that 
\[
\big(1- \mathrm{e}^{-\eta} \big)\mathrm{e}^{F(x)}\geq D\cdot\textup{dist}(x,V)^c
\]
for all $x \in W_i \cap W_j \setminus V$.
As a result, it is enough to argue that  in a neighborhood of $p$ we have: 
\begin{equation}
    \label{eq:dist-Holder-ineq}
    D\cdot\textup{dist}(x,V)^c \geq \mathrm{e}^{- \eta}\big( \mathrm{e}^{ \varphi(p_j(x))} - \mathrm{e}^{ \varphi(p_i(x))}\big).
\end{equation}
This follows from the $\alpha$-H\"older continuity of the expression on the right-hand side and its vanishing on $V$ (recall that $0< c < \alpha$). So the proof is complete.
\end{proof}

\begin{remark}
    The key inequality \eqref{eq: extKey}
does not hold when $\mathrm{e}^\varphi$ is not continuous. Here is a counterexample: let $V=\CC$, $X=\CC^2$. Let $\varphi=\sum_{n=1}^\infty\frac{1}{n^2}\log|z-1/n|^2$, a psh function on $\CC$ (discontinuous at $z=0$). Let $p_i(z,w)=z$ and $p_j(z,w)=z+w$. Then one asks if
\[
(1-\eta)|w|^c\geq \mathrm{e}^{-\eta}\prod_{n=1}^\infty|z-1/n|^{2/n^2}-\prod_{n=1}^\infty|z+w-1/n|^{2/n^2}
\]
holds in a neighborhood of $(0,0)$. The answer is negative, unfortunately. One sees this by taking the sequence of points $(z=0,w=1/n)$. Therefore, if one wants to remove the continuity assumption on $\mathrm{e}^\varphi$ in our extension theorem, a different approach is needed.
\end{remark}

Finally, we state a direct consequence of Theorem~\ref{thm:CT-thm-refined'} that is enough for our later use.

\begin{theorem}
\label{thm:CT-thm-refined''} Let $V\subsetneq X$ be a connected positive dimensional compact complex submanifold of $(X,\omega)$. Let $T=\omega|_V+\ddc\varphi$ be a K\"ahler current with gentle analytic singularities on $V$. Then one can find a K\"ahler current $\tilde T=\omega+\ddc\tilde \varphi$ on $X$ such that 
 \begin{enumerate}
     \item $\tilde \varphi|_V=\varphi$, i.e., $\tilde T$ extends $T$;
     \item $\tilde \varphi$ has analytic singularities on $X$;
     \item $\tilde \varphi$ is continuous on $X\setminus V$, so $\tilde \varphi$ and $\varphi$ have the same singular locus.
 \end{enumerate}
\end{theorem}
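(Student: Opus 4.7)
The plan is to obtain Theorem~\ref{thm:CT-thm-refined''} as an immediate corollary of Theorem~\ref{thm:CT-thm-refined'}. The only thing to check is that a K\"ahler current $T=\omega|_V+\ddc\varphi$ with gentle analytic singularities on $V$ automatically satisfies the hypotheses of Theorem~\ref{thm:CT-thm-refined'}---namely, that $\mathrm{e}^\varphi$ is H\"older continuous on $V$ and that $\varphi$ has analytic singularities in the sense of Definition~\ref{def-loc-analy-sing}.

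First I would handle the H\"older continuity of $\mathrm{e}^\varphi$. By Definition~\ref{def:analy-sing}(1) there is some $c\in\QQ_{>0}$ for which $\mathrm{e}^{\varphi/c}$ is smooth on $V$. Since the map $t\mapsto t^c$ is H\"older continuous on bounded subsets of $\RR_{\geq 0}$ (with H\"older exponent $\min(c,1)$), composing with the smooth---hence locally Lipschitz---function $\mathrm{e}^{\varphi/c}$ on the compact $V$ shows that $\mathrm{e}^\varphi=(\mathrm{e}^{\varphi/c})^c$ is H\"older continuous. The other hypothesis is even more direct: the local representation of $\varphi$ provided by Definition~\ref{def:analy-sing}(2) is precisely what Definition~\ref{def-loc-analy-sing} requires.

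With these checks in place, Theorem~\ref{thm:CT-thm-refined'} produces a K\"ahler current $\tilde T = \omega + \ddc\tilde\varphi$ that fulfills (1), (2), and the continuity of $\tilde\varphi$ on $X\setminus V$ asserted in (3). The remaining claim in (3)---that $\tilde\varphi$ and $\varphi$ have the same singular locus---will follow for free: continuity of $\tilde\varphi$ on $X\setminus V$ forces $\{\tilde\varphi=-\infty\}\subseteq V$, and the identity $\tilde\varphi|_V=\varphi$ from (1) identifies the two singular sets on $V$. I do not anticipate any real obstacle; the statement is essentially a repackaging of Theorem~\ref{thm:CT-thm-refined'} in the refined language of gentle analytic singularities used elsewhere in the paper.
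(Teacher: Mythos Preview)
Your proposal is correct and follows essentially the same approach as the paper: the paper's proof also reduces directly to Theorem~\ref{thm:CT-thm-refined'} by noting that Definition~\ref{def:analy-sing}(1) gives $\mathrm{e}^{\varphi}\in C^{\alpha}(V)$ for some $\alpha>0$. Your additional remarks (spelling out why $\mathrm{e}^\varphi=(\mathrm{e}^{\varphi/c})^c$ is H\"older, why gentle analytic singularities are analytic singularities, and why the singular loci agree) simply make explicit what the paper leaves implicit.
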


\begin{proof}
Using Definition~\ref{def:analy-sing}(1), we see that $\mathrm{e}^{\varphi}\in C^{\alpha}(V)$ for some $\alpha>0$. 
So the result follows directly from Theorem~\ref{thm:CT-thm-refined'}.
\end{proof}

\section{The volume identity}
\label{sec:vol-eq}
Unless specified otherwise, in this section $\xi$ is a  big class on $X$ and $\dim X = n$. We also fix a smooth flag $Y_{\bullet}=(X=Y_0\supset \cdots \supset Y_n)$ on $X$.
Define
\[
\nu_{\min}(\xi,Y_1)\coloneqq \inf_{T\in\Omega(X,\xi)}\nu(T,Y_1),\quad \nu_{\max}(\xi,Y_1)\coloneqq \sup_{T\in\Omega(X,\xi)}\nu(T,Y_1).
\]

\begin{lemma}\label{lma:nu1closure}
	One has $\nu_{\min}(\xi,Y_1)<\nu_{\max}(\xi,Y_1)$. Moreover,
 \[
 \overline{\nu_1(\Omega(X,\xi))}= [\nu_{\min}(\xi,Y_1),\nu_{\max}(\xi,Y_1)],
 \]
 where $\nu_1$ is the first component of the valuative map $\nu$.
\end{lemma}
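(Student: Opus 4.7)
The plan is to deduce both assertions from Proposition~\ref{prop:delta-is-convex-body}, which establishes that $\Delta(\xi)$ is a convex body in $\RR^n$. Consider the linear projection $\pi_1 : \RR^n \to \RR$ onto the first coordinate. Since $\Delta(\xi)$ is compact and convex, $\pi_1(\Delta(\xi))$ is a closed bounded interval in $\RR$; since $\Delta(\xi)$ has non-empty interior in $\RR^n$, so does $\pi_1(\Delta(\xi))$ in $\RR$, making this interval non-degenerate.

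To identify the projection with $\overline{\nu_1(\Omega(X,\xi))}$, recall that the valuation vectors in $\nu(\Omega(X,\xi))$ form a bounded subset of $\QQ_{\geq 0}^n$ by cohomological constants (as noted in the discussion preceding Lemma~\ref{lem:omega-dense-in-gamma}). Boundedness together with continuity of $\pi_1$ yields the projection-closure identity
\[
\pi_1(\Delta(\xi)) \;=\; \pi_1\bigl(\overline{\nu(\Omega(X,\xi))}\bigr) \;=\; \overline{\nu_1(\Omega(X,\xi))}.
\]
By definition, the endpoints of this closed interval are $\inf \nu_1(\Omega(X,\xi)) = \nu_{\min}(\xi,Y_1)$ and $\sup \nu_1(\Omega(X,\xi)) = \nu_{\max}(\xi,Y_1)$. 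Combining with the previous paragraph gives both the identity $\overline{\nu_1(\Omega(X,\xi))} = [\nu_{\min}(\xi,Y_1), \nu_{\max}(\xi,Y_1)]$ and the strict inequality $\nu_{\min}(\xi,Y_1) < \nu_{\max}(\xi,Y_1)$.

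The main work is thus absorbed into Proposition~\ref{prop:delta-is-convex-body}, whose proof already constructs K\"ahler currents in $\Omega(X,\xi)$ with prescribed small positive Lelong numbers along each member of the flag, using \cite[Lemma~2.1(i)]{DP04}. A more constructive route is also available, should one prefer to avoid appealing to the full convex body statement: starting from any $R\in\Omega(X,\xi)$ and choosing a qpsh function $\psi$ with analytic singularities precisely along $Y_1$ and satisfying $C\omega+\ddc\psi\geq 0$, for small rational $\epsilon>0$ the current $R+\epsilon\ddc\psi$ is a K\"ahler current in $\mathcal{A}(X,\xi,q)$ with $\nu(R+\epsilon\ddc\psi,Y_1)=\nu(R,Y_1)+\epsilon$, and Lemma~\ref{lem:omega-dense-in-gamma} then approximates it by elements of $\Omega(X,\xi)$; rational convex combinations of two such constructions fill out the full interval by density. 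There is no genuine obstacle either way.
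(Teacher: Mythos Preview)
Your argument is correct and takes a genuinely different route from the paper. The paper proves the strict inequality $\nu_{\min}<\nu_{\max}$ by a direct bigness argument: for any $T\in\Omega(X,\xi)$, the current $T-\nu_{\min}[Y_1]$ is still K\"ahler, so $\xi-\nu_{\min}\{Y_1\}$ is big, hence $\xi-(\nu_{\min}+\varepsilon)\{Y_1\}$ is big for small $\varepsilon$, which produces (via a current of the form $S+(\nu_{\min}+\varepsilon)[Y_1]$) an element of $\Omega(X,\xi)$ with Lelong number at least $\nu_{\min}+\varepsilon$ along $Y_1$. The interval identity is then deduced from $\QQ$-convexity of $\Omega(X,\xi)$. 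Your approach instead packages both conclusions into the single observation that $\Delta(\xi)$ is already known to be a convex body (Proposition~\ref{prop:delta-is-convex-body}), so its first-coordinate projection is automatically a nondegenerate closed interval; the projection--closure swap is justified by boundedness. This is shorter and more conceptual, at the cost of invoking the earlier proposition (whose proof in turn relies on \cite[Lemma~2.1(i)]{DP04}). The paper's argument is more self-contained and yields the slightly sharper information that $\xi-t\{Y_1\}$ remains big for $t$ just above $\nu_{\min}$, a fact used implicitly later in the section.
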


\begin{proof}
	Pick any $T\in\Omega(X,\xi)$, then $T-\nu_{\min}(\xi,Y_1)[Y_1]$ is a K\"ahler current, which implies that $\xi-\nu_{\min}(\xi,Y_1)\{Y_1\}$ is a big class. So for small $\varepsilon>0$, $\xi-(\nu_{\min}(\xi,Y_1)+\varepsilon)\{Y_1\}$ is a big class as well. So there exists $S\in\Omega(X,\xi)$ with $\nu(S,Y_1)\geq\nu_{\min}(\xi,Y_1)+\varepsilon$, giving $\nu_{\min}(\xi,Y_1)<\nu_{\max}(\xi,Y_1)$. The second assertion follows from the convexity of $\Omega(X,\xi)$.
\end{proof}

\begin{proposition}
\label{prop:vol-of-xi=int-from-mu-to-tau}
	Assume that $n\geq2$. One has 
 \[
 \vol(\xi)=n\int_{\nu_{\min}(\xi,Y_1)}^{\nu_{\max}(\xi,Y_1)}\vol_{X|{Y_1}}(\xi-t\{Y_1\})\,\mathrm d t.
 \]
\end{proposition}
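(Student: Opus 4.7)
Write $\mu \coloneqq \nu_{\min}(\xi, Y_1)$ and $M \coloneqq \nu_{\max}(\xi, Y_1)$. The plan is to apply Theorem~\ref{thm:WN-volume-identity} to the translated class $\eta_s \coloneqq \xi - s\{Y_1\}$ for $s$ slightly larger than $\mu$, let $s \searrow \mu$, and finally identify $\vol(\xi - \mu\{Y_1\})$ with $\vol(\xi)$. First I would verify that for each $s \in (\mu, M)$ the class $\eta_s$ is big and satisfies $Y_1 \cap K(\eta_s) \neq \varnothing$: picking K\"ahler currents $T_1, T_2 \in \Omega(X, \xi)$ with $\nu(T_1, Y_1) < s < \nu(T_2, Y_1)$ via Lemma~\ref{lma:nu1closure}, a positive combination $T = \lambda T_1 + (1 - \lambda) T_2$ is a K\"ahler current in $\xi$ with $\nu(T, Y_1) = s$, and then $T - s[Y_1]$ is a K\"ahler current in $\eta_s$ with zero generic Lelong number along $Y_1$. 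Using Demailly's regularization (Theorem~\ref{thm:Dem-reg}) together with the bijection $S \leftrightarrow S + s[Y_1]$ between K\"ahler currents in the two classes, one further gets $\nu_{\max}(\eta_s, Y_1) = M - s$.

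With these hypotheses in hand, Theorem~\ref{thm:WN-volume-identity} applied to $\eta_s$ and the change of variables $u = s + t$ yield
\[
\vol(\xi - s\{Y_1\}) = n \int_s^M \vol_{X|Y_1}(\xi - u\{Y_1\}) \, \mathrm du
\]
for every $s \in (\mu, M)$. Letting $s \searrow \mu$, the left side tends to $\vol(\xi - \mu\{Y_1\})$ by continuity of the volume function, while the right side tends to $n \int_\mu^M \vol_{X|Y_1}(\xi - u\{Y_1\}) \, \mathrm du$ by bounded convergence, since the restricted volume is nonnegative and uniformly bounded above on $[\mu, M]$ by a cohomological constant (cf.\ \cite[Corollary~4.11]{Mat13}).

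It remains to prove $\vol(\xi) = \vol(\xi - \mu\{Y_1\})$. The inequality $\vol(\xi) \geq \vol(\xi - \mu\{Y_1\})$ is simply monotonicity of the volume under addition of the pseudoeffective class $\mu\{Y_1\}$ (integrating the derivative identity of Theorem~\ref{thm:WN-volume-identity}). For the reverse, take any $T \in \Omega(X, \xi)$; since $\nu(T, Y_1) \geq \mu$, Siu's decomposition gives $T \geq \mu[Y_1]$ as positive currents, so $T - \mu[Y_1]$ is a positive current in $\xi - \mu\{Y_1\}$. The non-pluripolar Monge--Amp\`ere product does not charge the pluripolar set $Y_1$ and agrees with that of $T - \mu[Y_1]$ off $Y_1$, so $\int_X \langle T^n \rangle = \int_X \langle (T - \mu[Y_1])^n \rangle \leq \vol(\xi - \mu\{Y_1\})$; taking the supremum over $T$ finishes the argument. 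The hard part is precisely this final identification: when $\mu$ is irrational the shifted current $T - \mu[Y_1]$ does not literally sit in $\Omega(X, \xi - \mu\{Y_1\})$, and one must bypass this by invoking the invariance of non-pluripolar Monge--Amp\`ere masses under subtracting divisorial components rather than working with $\Omega$-representatives directly.
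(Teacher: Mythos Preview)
Your proof is correct and follows essentially the same route as the paper: verify $Y_1\cap K(\xi-s\{Y_1\})\neq\varnothing$ for $s\in(\mu,M)$, apply Theorem~\ref{thm:WN-volume-identity} to $\xi-s\{Y_1\}$, and pass to the limit $s\searrow\mu$ using continuity of the volume. The only substantive difference is that where the paper writes ``Clearly, $\vol(\xi)=\vol(\xi-\nu_{\min}(\xi,Y_1)\{Y_1\})$'', you supply an explicit justification via Siu decomposition and invariance of non-pluripolar masses under subtraction of divisorial components; your remark that the bijection $S\leftrightarrow S+s[Y_1]$ does not literally land in $\Omega$ when $s\notin\mathbb{Q}$ is well taken, but the identity $\nu_{\max}(\eta_s,Y_1)=M-s$ follows anyway from the characterization of $\nu_{\max}$ as the threshold of bigness of $\xi-t\{Y_1\}$ (as used in the proof of Theorem~\ref{thm:WN-volume-identity}).
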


\begin{proof}
We first observe that for any $t\in(\nu_{\min}(\xi,Y_1),\nu_{\max}(\xi,Y_1))$, one has 
\[
Y_1\cap K(\xi-t\{Y_1\})\neq\varnothing. 
\]
In fact, by Lemma~\ref{lma:nu1closure}, for $t\in(\nu_{\min}(\xi,Y_1),\nu_{\max}(\xi,Y_1))$ lying in a dense subset, there exists $T\in\Omega(X,\xi)$ with $\nu(T,Y_1)=t$. So $T-t[Y_1]$ is a K\"ahler current in $\xi-t\{Y_1\}$ with $Y_1\nsubseteq E_+(T-t[Y_1])$. 

Clearly, $\vol(\xi)=\vol(\xi-\nu_{\min}(\xi,Y_1)\{Y_1\})$. By the continuity of $\vol(\cdot)$ \cite[Proposition 4.7]{Bou02b}, one thus has
	\[
 \vol(\xi)=\lim_{\varepsilon\searrow 0}\vol(\xi-(\nu_{\min}(\xi,Y_1)+\varepsilon)\{Y_1\}).
 \]
	We can apply Theorem~\ref{thm:WN-volume-identity} to the big class $\xi-(\nu_{\min}(\xi,Y_1)+\varepsilon)\{Y_1\}$ to get
	\begin{align*}
    \vol(\xi)&=\lim_{\varepsilon\searrow 0}n\int_{\nu_{\min}(\xi,Y_1)+\varepsilon}^{\nu_{\max}(\xi,Y_1)}\vol_{X|{Y_1}}(\xi-t\{Y_1\})\,\mathrm d t\\ &=n\int_{\nu_{\min}(\xi,Y_1)}^{\nu_{\max}(\xi,Y_1)}\vol_{X|{Y_1}}(\xi-t\{Y_1\})\,\mathrm d t
	\end{align*}
    as desired.
\end{proof}

Now for any $t\in(\nu_{\min}(\xi,Y_1),\nu_{\max}(\xi,Y_1))$, we look at the following restricted volume
\[
\vol_{X|{Y_1}}(\xi-t\{Y_1\})=\sup_{T\in\Omega(X,\xi-t\{Y_1\})}\int_{Y_1}T|_{Y_1}^{n-1}
\]
and the $t$-slice of the Okounkov body
\[
\Delta_t(\xi)\coloneqq \{x\in\RR^{n-1}:(t,x)\in\Delta(\xi)\}.
\]
We shall prove that they are related in the following way (cf. \cite[Corollary 4.25(i)]{LM09}).

\begin{namedtheorem}[$C_n$]
\label{conj:restricted-vol-equality}
    If $n\geq2$, then for any $t\in(\nu_{\min}(\xi,Y_1),\nu_{\max}(\xi,Y_1))$, one has
    \[
    \vol_{X|Y_1}(\xi-t\{Y_1\})=(n-1)!\vol_{\RR^{n-1}}(\Delta_t(\xi)).
    \]
\end{namedtheorem}

\begin{lemma}\label{lma:CnimplyAn}
Assume that $n\geq 2$.
	Property~\hyperref[conj:restricted-vol-equality]{$C_n$} implies Property~\hyperref[conj:vol-equality]{$A_n$}.
\end{lemma}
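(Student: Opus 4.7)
The plan is to combine the integral formula of Proposition~\ref{prop:vol-of-xi=int-from-mu-to-tau} with the slice identity of Property~\hyperref[conj:restricted-vol-equality]{$C_n$} and then apply Fubini on $\RR^n$.

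First, by Proposition~\ref{prop:vol-of-xi=int-from-mu-to-tau},
\[
\vol(\xi)=n\int_{\nu_{\min}(\xi,Y_1)}^{\nu_{\max}(\xi,Y_1)}\vol_{X|Y_1}(\xi-t\{Y_1\})\,\mathrm dt.
\]
Using Property~\hyperref[conj:restricted-vol-equality]{$C_n$} to substitute the integrand pointwise for $t\in (\nu_{\min}(\xi,Y_1),\nu_{\max}(\xi,Y_1))$, this becomes
\[
\vol(\xi)=n!\int_{\nu_{\min}(\xi,Y_1)}^{\nu_{\max}(\xi,Y_1)}\vol_{\RR^{n-1}}(\Delta_t(\xi))\,\mathrm dt.
\]

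Next I would check that the range of integration can be harmlessly extended to all of $\RR$. The projection of $\Delta(\xi)=\overline{\nu(\Omega(X,\xi))}$ onto the first coordinate is contained in $\overline{\nu_1(\Omega(X,\xi))}=[\nu_{\min}(\xi,Y_1),\nu_{\max}(\xi,Y_1)]$ by Lemma~\ref{lma:nu1closure}. Hence $\Delta_t(\xi)=\varnothing$ whenever $t$ lies outside this interval, so those $t$ contribute nothing to the right-hand side, and changing the endpoints from open to closed is immaterial since they form a measure-zero set.

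Finally, applying Fubini's theorem to the convex body $\Delta(\xi)\subset\RR^n$ gives
\[
\vol_{\RR^n}(\Delta(\xi))=\int_{\RR}\vol_{\RR^{n-1}}(\Delta_t(\xi))\,\mathrm dt=\int_{\nu_{\min}(\xi,Y_1)}^{\nu_{\max}(\xi,Y_1)}\vol_{\RR^{n-1}}(\Delta_t(\xi))\,\mathrm dt,
\]
and combining with the previous display yields $\vol(\xi)=n!\,\vol_{\RR^n}(\Delta(\xi))$, which is Property~\hyperref[conj:vol-equality]{$A_n$}. There is no real obstacle here: every ingredient is available from earlier in the section, and the only mild point to verify is the emptiness of slices outside $[\nu_{\min},\nu_{\max}]$, which follows immediately from the definition of $\Delta(\xi)$ and Lemma~\ref{lma:nu1closure}.
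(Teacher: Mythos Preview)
Your proof is correct and follows essentially the same approach as the paper's own proof: both combine Proposition~\ref{prop:vol-of-xi=int-from-mu-to-tau} with Property~\hyperref[conj:restricted-vol-equality]{$C_n$} and use Lemma~\ref{lma:nu1closure} to identify the range of the slice integral with $[\nu_{\min},\nu_{\max}]$, then finish by Fubini. The paper's version is simply more terse, writing the Fubini identity for $\vol_{\RR^n}(\Delta(\xi))$ directly and omitting the explicit discussion of empty slices outside the interval.
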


\begin{proof}
	By Lemma~\ref{lma:nu1closure}, we can express $\vol_{\RR^n}(\Delta(\xi))$ as
	\[
	\vol_{\RR^n}(\Delta(\xi))=\int_{\nu_{\min}(\xi,Y_1)}^{\nu_{\max}(\xi,Y_1)}\vol_{\RR^{n-1}}(\Delta_t(\xi))\,\mathrm dt.
	\]
	So we conclude from Proposition~\ref{prop:vol-of-xi=int-from-mu-to-tau}.
\end{proof}

Our goal is now to show that for $n\geq 2$, Property~\hyperref[conj:relative-Oko-vol-equal]{$B_{n-1}$} implies Property~\hyperref[conj:restricted-vol-equality]{$C_n$}. Since we already know by Lemma~\ref{lma:rel-vol-conj=vol-conj} that Property~\hyperref[conj:vol-equality]{$A_n$} and Property~\hyperref[conj:relative-Oko-vol-equal]{$B_n$} are equivalent, this will together with Lemma~\ref{lma:CnimplyAn} complete the induction argument and thus prove Property~\hyperref[conj:vol-equality]{$A_n$} for all $n$.

Note that both maps $t\mapsto\vol_{X|Y_1}(\xi-t\{Y_1\})$ and $t\mapsto\vol_{\RR^{n-1}}(\Delta_t(\xi))$ are continuous: the former continuity is due to \cite[Corollary~4.11]{Mat13} and the latter follows from the Brunn--Minkowski inequality. Therefore, to establish Property~\hyperref[conj:restricted-vol-equality]{$C_n$} it is enough to show that 
\[
\vol_{X|Y_1}(\xi-t\{Y_1\})=(n-1)!\vol_{\RR^{n-1}}(\Delta_t(\xi))
\]
for all rational $t\in(\nu_{\min}(\xi,Y_1),\nu_{\max}(\xi,Y_1))$.

For $t\in(\nu_{\min}(\xi,Y_1),\nu_{\max}(\xi,Y_1))\cap \QQ$, we introduce $t$-slices of $\Omega(X,\xi)$ and $\mathcal A(X,\xi)$ and show how they are connected to the $t$-slice of the Okounkov body:
\[
\Omega_t(X,\xi)\coloneqq \{T\in\Omega(X,\xi): \nu(T,Y_1)= t\},
\]
\[
\mathcal{A}_t(X,\xi)\coloneqq \{T\in\mathcal{A}(X,\xi):\nu(T,Y_1)= t\}.
\]

\begin{lemma}
\label{lem:Omega-t-dense-in-Delta-t}
	For any $t\in(\nu_{\min}(\xi,Y_1),\nu_{\max}(\xi,Y_1))\cap\QQ$, one has
\[
\{t\}\times\Delta_t(\xi)=\overline{\nu(\Omega_t(X,\xi))}=\overline{\nu(\mathcal{A}_t(X,\xi))}. 
\]
\end{lemma}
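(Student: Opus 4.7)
The plan is to establish the chain of inclusions
\[
\{t\}\times\Delta_t(\xi)\supseteq\overline{\nu(\mathcal{A}_t(X,\xi))}\supseteq\overline{\nu(\Omega_t(X,\xi))}\supseteq\{t\}\times\Delta_t(\xi),
\]
where the first two are immediate from the inclusions $\Omega_t(X,\xi)\subseteq\mathcal{A}_t(X,\xi)\subseteq\mathcal{A}(X,\xi,q)$ together with Definition~\ref{def:Ok-body}. The nontrivial task is that, given $(t,x)\in\Delta_t(\xi)$, we produce a sequence $T_k\in\Omega_t(X,\xi)$ with $\nu(T_k)\to(t,x)$. The strategy will be to first approximate in $\mathcal{A}_t(X,\xi)$ by using rational convex combinations to pin down the first Lelong number to $t$ exactly, and then to promote these approximations to $\Omega_t(X,\xi)$ via Siu decomposition along $Y_1$ combined with Demailly regularization.

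First, pick $R_k\in\Omega(X,\xi)$ with $\nu(R_k)\to(t,x)$ using Definition~\ref{def:Ok-body}. Since $t\in(\nu_{\min}(\xi,Y_1),\nu_{\max}(\xi,Y_1))$, Lemma~\ref{lma:nu1closure} supplies $S_{\pm}\in\Omega(X,\xi)$ with $\nu_1(S_-)<t<\nu_1(S_+)$. For large $k$, form the rational convex combination $\tilde R_k\coloneqq (1-\varepsilon_k)R_k+\varepsilon_k S_{\pm}$, with sign and $\varepsilon_k\in\QQ_{\geq 0}$ chosen so that $\nu_1(\tilde R_k)=t$ exactly; since $\varepsilon_k\to 0$ we retain $\nu(\tilde R_k)\to(t,x)$. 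After clearing denominators, such combinations preserve analytic singularities (locally, $(\sum|f_i|^2)^p(\sum|g_j|^2)^{p'}$ expands into a sum of squared monomials), so $\tilde R_k\in\mathcal{A}_t(X,\xi)$.

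Next, observe that $\tilde R_k-t[Y_1]\geq\omega$: subtract $\omega$ first, apply Siu decomposition (Lemma~\ref{lem:Siu-deomp-lem}) to the residual positive current $\tilde R_k-\omega$ along $Y_1$, and add $\omega$ back. Thus $\tilde R_k-t[Y_1]$ is a K\"ahler current with analytic singularities in $\xi-t\{Y_1\}$, with vanishing generic Lelong number along $Y_1$. Applying Theorem~\ref{thm:Dem-reg} to $\tilde R_k-t[Y_1]$ yields $\sigma_{k,m}\in\Omega(X,\xi-t\{Y_1\})$ with $\nu(\sigma_{k,m})\to\nu(\tilde R_k-t[Y_1])$ as $m\to\infty$. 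A key observation is that $\nu_1(\sigma_{k,m})=0$ for every $m$: at a generic point $p\in Y_1$ the local analytic-singularity form of $\tilde R_k-t[Y_1]$ is $c\log\sum|f_i|^2+O(1)$ with some $f_i$ not vanishing at $p$, so its potential is bounded near $p$; the multiplier ideal is therefore trivial at $p$, which forces the Demailly regularization to also be bounded near $p$. Setting $T_{k,m}\coloneqq \sigma_{k,m}+t[Y_1]$ and using additivity of $\nu$, one has $\nu(T_{k,m})=\nu(\sigma_{k,m})+(t,0,\dots,0)\to\nu(\tilde R_k)$, so a diagonal argument supplies the required sequence converging to $(t,x)$.

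The main technical obstacle is to verify that $T_{k,m}$ has \emph{gentle} analytic singularities, as required for membership in $\Omega_t(X,\xi)$. Writing $t=a/b$ in lowest terms and using the local form $\psi_{k,m}=(1/m)\log\sum|f_i|^2+h$ of $\sigma_{k,m}$'s potential (with $e^{m\psi_{k,m}}=(\sum|f_i|^2)e^{mh}$ smooth), the potential of $T_{k,m}$ has the local form $(1/(mb))\log\sum_\alpha|s^{ma}F_\alpha|^2+h$, where $s$ is a local defining function of $Y_1$ and the $F_\alpha$ enumerate degree-$b$ monomials in the $f_i$. The three conditions of Definition~\ref{def:analy-sing} are then checked with $c=1/(mb)$: $e^{T_{k,m}/c}=((\sum|f_i|^2)e^{mh})^b|s|^{2ma}$ is smooth since $b$ is a positive integer; condition (2) is visible in the displayed form; and a log resolution is obtained by combining a log resolution of the relevant multiplier ideal with the already smooth divisor $Y_1$ into an SNC configuration.
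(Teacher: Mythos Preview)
Your argument is correct, but it takes a significantly longer route than the paper's proof. The paper observes that a rational convex combination of currents in $\Omega(X,\xi)$ again lies in $\Omega(X,\xi)$: if $\varphi_j$ has gentle analytic singularities with $e^{\varphi_j/c_j}$ smooth, then for $a_j\in\QQ_{\geq 0}$ one can choose $c\in\QQ_{>0}$ with $a_jc_j/c\in\ZZ_{\geq 0}$, so $e^{(\sum a_j\varphi_j)/c}=\prod(e^{\varphi_j/c_j})^{a_jc_j/c}$ is smooth, and condition~(3) of Definition~\ref{def:analy-sing} follows by passing to a common resolution. Hence your $\tilde R_k$ already belong to $\Omega_t(X,\xi)$, and the proof ends right after your first paragraph. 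The paper's version is exactly this: pick $T_i\in\Omega(X,\xi)$ with $\nu(T_i)\to a$, choose $S,G\in\Omega(X,\xi)$ with $\nu_1(S)<t<\nu_1(G)$, and interpolate rationally to force $\nu_1=t$; the resulting $S_i$ lie in $\Omega_t(X,\xi)$ and $\nu(S_i)\to a$.

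Your second and third paragraphs---the Siu decomposition, Demailly regularization, and careful verification that $\sigma_{k,m}+t[Y_1]$ has gentle analytic singularities---are all valid, but unnecessary once the stability of $\Omega(X,\xi)$ under rational convex combinations is noted. What your longer route does buy is an explicit demonstration that one can pass through $\mathcal{A}_t$ and upgrade to $\Omega_t$ via regularization, which is instructive but redundant here.
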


\begin{proof}
It is clear that
\[
\overline{\nu(\Omega_t(X,\xi))}\subseteq \overline{\nu(\mathcal{A}_t(X,\xi))}\subseteq \{t\}\times\Delta_t(\xi).
\]
To finish we will argue that $\{t\}\times\Delta_t(\xi) \subseteq \overline{\nu(\Omega_t(X,\xi))}$. Let $a\in\{t\}\times\Delta_t(\xi)$. Then there exists a sequence $T_i\in\Omega(X,\xi)$ such that $\nu(T_i)\to a$. So in particular, $t_i\coloneqq \nu(T_i,Y_1)\to t$. Pick $S,G\in\Omega(X,\xi)$ such that $s\coloneqq \nu_1(S,Y_1)<t$ and $g\coloneqq \nu(G,Y_1)>t$. Define	\begin{equation*}
		S_i\coloneqq 
		\begin{cases}
			\frac{t-s}{t_i-s}T_i+\frac{t_i-t}{t_i-s}S,\ &\text{if }t_i\geq t,\\
			\frac{g-t}{g-t_i}T_i+\frac{t-t_i}{g-t_i}G,\ &\text{if }t_i< t.
		\end{cases}
	\end{equation*}
	Then $S_i\in\Omega_t(X,\xi)$ for $i\gg0$ and $\lim_i\nu(S_i)=\lim_i \nu(T_i)=a$, hence $a\in\overline{\nu(\Omega_t(X,\xi))}$.
\end{proof}

We are now ready to prove that Property~\hyperref[conj:relative-Oko-vol-equal]{$B_{n-1}$} implies Property~\hyperref[conj:restricted-vol-equality]{$C_n$}:

\begin{lemma}
\label{lma:A-n-1=>C-n}
	Assume that $n\geq 2$, then Property~\hyperref[conj:relative-Oko-vol-equal]{$B_{n-1}$} implies Property~\hyperref[conj:restricted-vol-equality]{$C_n$}.
\end{lemma}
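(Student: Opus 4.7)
The plan is to reduce Property~\hyperref[conj:restricted-vol-equality]{$C_n$} to Property~\hyperref[conj:relative-Oko-vol-equal]{$B_{n-1}$} by identifying the slice $\Delta_t(\xi)$ with a union of partial Okounkov bodies on $Y_1$. Set $\eta\coloneqq\xi-t\{Y_1\}$ (big on $X$, with $Y_1\not\subseteq E_{nK}(\eta)$) and $\alpha_t\coloneqq\eta|_{Y_1}$, and let $W_\bullet=(Y_1\supset Y_2\supset\cdots\supset Y_n)$ be the induced flag on $Y_1$. For any $T\in\Omega(X,\eta)$ with $Y_1\not\subseteq E_+(T)$, the restriction $T|_{Y_1}$ lies in $\Omega(Y_1,\alpha_t)$ since gentleness of analytic singularities is preserved by restricting the local expression and the log resolution. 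Applying \hyperref[conj:relative-Oko-vol-equal]{$B_{n-1}$} on $(Y_1,\alpha_t)$ gives
\[
\int_{Y_1} T|_{Y_1}^{\,n-1}=(n-1)!\,\vol_{\RR^{n-1}}\bigl(\Delta_{W_\bullet}(\alpha_t;T|_{Y_1})\bigr),
\]
so $C_n$ will follow from the bridge identity
\begin{equation*}
\Delta_t(\xi)\;=\;\overline{\bigcup_{T}\Delta_{W_\bullet}(\alpha_t;T|_{Y_1})},\qquad(\star)
\end{equation*}
with $T$ ranging over the above family.

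Granting $(\star)$, the $\leq$ direction of $C_n$ is immediate from the $\supseteq$ inclusion and the definition of the restricted volume. For the $\geq$ direction, extract a countable subfamily $\{T_j\}$ whose partial Okounkov bodies are dense in $\Delta_t(\xi)$, then iterate Lemma~\ref{lem:sing-decrease} to obtain an increasing sequence $T_i'\in\Omega(X,\eta)$ less singular than $T_1,\dots,T_i$ (automatically $Y_1\not\subseteq E_+(T_i')$ since Lelong numbers drop under $\preceq$). Less singular reference currents give larger partial Okounkov bodies, so $\Delta_{W_\bullet}(\alpha_t;T_i'|_{Y_1})\nearrow$ a set dense in $\Delta_t(\xi)$, and passing to the limit in $B_{n-1}$ yields $(n-1)!\vol_{\RR^{n-1}}(\Delta_t(\xi))\leq\vol_{X|Y_1}(\eta)$.

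For the $\subseteq$ half of $(\star)$, take $a\in\Delta_t(\xi)$, so by Lemma~\ref{lem:Omega-t-dense-in-Delta-t} we have $(t,a)=\lim_j\nu(T_j')$ with $T_j'\in\Omega_t(X,\xi)$. Lemma~\ref{lem:Siu-deomp-lem} then places $T_j'-t[Y_1]\in\mathcal{A}(X,\eta)$ with $Y_1\not\subseteq E_+$, Theorem~\ref{thm:Dem-reg} furnishes a dominating $T_j\in\Omega(X,\eta)$, and Lemma~\ref{lma:pob_loc_analytic} locates $\nu_{W_\bullet}((T_j'-t[Y_1])|_{Y_1})$ — which equals the last $n-1$ coordinates of $\nu(T_j')$, hence converges to $a$ — inside $\Delta_{W_\bullet}(\alpha_t;T_j|_{Y_1})$. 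For the $\supseteq$ half, fix $T\in\Omega(X,\eta)$ with $Y_1\not\subseteq E_+(T)$ and $S\in\Omega(Y_1,\alpha_t)$ with $S\preceq T|_{Y_1}$; it suffices to produce $\tilde S\in\mathcal{A}(X,\eta)$ with $\tilde S|_{Y_1}=S$ and $Y_1\not\subseteq E_+(\tilde S)$, since then $\tilde S+t[Y_1]\in\mathcal{A}_t(X,\xi)$ has flag valuation vector $(t,\nu_{W_\bullet}(S))$ directly from the construction of $\nu$, whereupon Lemma~\ref{lem:Omega-t-dense-in-Delta-t} delivers $\nu_{W_\bullet}(S)\in\Delta_t(\xi)$.

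The main obstacle is constructing $\tilde S$: since $\eta$ is only big, Theorem~\ref{thm:CT-thm-refined} is not directly applicable. My workaround is to pass to a modification $\pi\colon Z\to X$ on which $\pi^*T=\beta+[D]$ with $\beta$ a smooth K\"ahler form (using Definition~\ref{def:analy-sing}(3) on $T$ and the K\"ahler-perturbation trick from the proof of Proposition~\ref{prop:approx-push-forward-current}) and $D$ an effective $\mathbb{Q}$-divisor, while simultaneously arranging via Theorem~\ref{thm:liftflaggeneral} that $Y_\bullet$ lifts to a smooth flag $W'_\bullet$ on $Z$ with $W'_1$ the strict transform of $Y_1$. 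Since $S\preceq T|_{Y_1}$, Siu's decomposition shows that $S'\coloneqq\pi|_{W'_1}^*S-[D|_{W'_1}]$ is a positive current in the K\"ahler class $\{\beta|_{W'_1}\}$; a small K\"ahler perturbation $(1-\varepsilon)S'+\varepsilon\beta|_{W'_1}$, followed if necessary by Demailly regularization, yields an element of $\Omega(W'_1,\{\beta|_{W'_1}\})$. Theorem~\ref{thm:CT-thm-refined''} applied on the K\"ahler pair $(Z,\beta)$ extends this to $\widetilde{S_0}\in\Omega(Z,\{\beta\})$; setting $\tilde S\coloneqq\pi_*(\widetilde{S_0}+[D])$ and invoking Lemma~\ref{lem:push-forward-has-analy-sing} produces the required current in $\mathcal{A}(X,\eta)$, with $\tilde S|_{Y_1}=S$ following from Lemma~\ref{lma:Zariskimain} together with the fact that $\pi$ is an isomorphism off an analytic subset (the $\varepsilon$-perturbation being absorbed by the closure in $(\star)$). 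Flag bookkeeping between $Y_\bullet$ and $W'_\bullet$ is handled by the unimodular change of coordinates from Lemma~\ref{lma:flagvaluationchange}, which is volume-preserving.
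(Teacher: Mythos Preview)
Your overall strategy matches the paper's: identify $\Delta_t(\xi)$ with an increasing union of partial Okounkov bodies on $Y_1$, invoke $B_{n-1}$ for each, and exhaust the restricted volume. The $\subseteq$ direction of your bridge identity $(\star)$ is fine and is essentially what the paper does.

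The $\supseteq$ direction, however, has a genuine gap at the pushforward step. You invoke Lemma~\ref{lem:push-forward-has-analy-sing} to conclude that $\tilde S=\pi_*(\widetilde{S_0}+[D])\in\mathcal{A}(X,\eta)$, but that lemma only applies to currents of the very special shape $\beta+[D]$ with $\beta$ a \emph{smooth} K\"ahler form. Your $\widetilde{S_0}$ is a singular K\"ahler current (the output of Theorem~\ref{thm:CT-thm-refined''} has analytic, not smooth, potentials), so the hypothesis fails. Pushforwards of currents with analytic singularities do not in general have analytic singularities; this is precisely why the paper develops Proposition~\ref{prop:approx-push-forward-current}, and that is the tool you should use here. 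Relatedly, you cannot get $\pi^*T=\beta+[D]$ with $\beta$ genuinely K\"ahler as an identity of currents: the perturbation trick only gives a nearby current $\beta_k+[D_k]$ in the same class.

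The paper circumvents this by not trying to produce a single $\tilde S$ on $X$ with $\tilde S|_{Y_1}=S$. Instead it factors the inclusion $\Delta_{t,i}\subseteq\Delta_t(\xi)$ through an intermediate body on the modification $X_i$: first the extension theorem shows $\{0\}\times\Delta_{t,i}\subseteq\bigl(\Delta_{Y_\bullet^{(i)}}(\{\beta_i\})\cap(\{0\}\times\RR^{n-1})+\nu_{Y_\bullet^{(i)}}([D_i])\bigr)g_i$, and then, separately, Proposition~\ref{prop:approx-push-forward-current} applied to arbitrary $G\in\Omega(X_i,\{\beta_i\})$ with $\nu(G,Y_1^{(i)})=0$ shows this intermediate set lies in $\{0\}\times\Delta_t(\xi)$. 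Splitting the argument this way avoids ever pushing forward a singular current directly. A further bookkeeping point you skate over: to land in $\mathcal{A}_t(X,\xi)$ after adding $t[Y_1]$ you need the approximating currents on $X$ to have exactly zero Lelong number along $Y_1$, which has to be tracked through the construction (the paper does this implicitly via $\nu(R_i,Y_1)=0$ and the upper-triangular form of $g_i$).
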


\begin{proof}
Pick any $t\in(\nu_{\min}(\xi,Y_1),\nu_{\max}(\xi,Y_1))\cap\QQ$.
	Put $\xi_t\coloneqq \xi-t\{Y_1\}$ for simplicity.
It suffices to show the following:
	there is an increasing sequence $R_i\in\Omega(X,\xi_t)$ such that
	\begin{enumerate}[(1)]
        \item $E_+(R_i)=E_{nK}(\xi_t)$ for all $i$;
		\item $R_i|_{Y_1}$ has positive mass and $\int_{Y_1}R_i|_{Y_1}^{n-1}\nearrow\vol_{X|{Y_1}}(\xi_t)$;
      \item One has $\int_{Y_1}R_i|_{Y_1}^{n-1}=(n-1)!\vol_{\RR^{n-1}}(\Delta_{t,i})$, where $\Delta_{t,i}$ is the $(n-1)$-dimensional partial Okounkov body $\Delta(\xi_t|_{Y_1};R_i|_{Y_1})$ of $\xi_t|_{Y_1}$ with respect to $R_i|_{Y_1}$, which is defined using the truncated flag $Y_1\supset Y_2\supset\dots\supset Y_n$;
		\item $\Delta_{t,i}$ is an increasing sequence of $(n-1)$-dimensional convex bodies contained in $\Delta_t(\xi)$ such that $\vol_{\RR^{n-1}}(\Delta_{t,i})\nearrow\vol_{\RR^{n-1}}(\Delta_t(\xi))$.
	\end{enumerate}

The above  conditions would imply  $\vol_{X|Y_1}(\xi-t\{Y_1\})$ to equal $(n-1)!\vol_{\RR^{n-1}}(\Delta_t(\xi))$ for any $t\in(\nu_{\min}(\xi,Y_1),\nu_{\max}(\xi,Y_1))\cap\QQ$, hence proving Property~\hyperref[conj:restricted-vol-equality]{$C_n$}.

To construct the sequence $R_i$, first note that the set $\nu_{Y_\bullet}(\Omega_t(X,\xi))\subset\QQ^n_{\geq 0}$ is countable. So one can write $\nu(\Omega_t(X,\xi))=\{a_1,a_2,\dots,a_i,\dots\}$ with $a_i\in\QQ^n_{\geq 0}$. Let $a_i'\in\QQ^{n-1}_{\geq 0}$ be such that $a_i=(t,a_i')$. Pick $T_i\in\Omega_t(\xi)$ so that $\nu_{Y_\bullet}(T_i)=a_i$. Fix $R_0\in\Omega(X,\xi_t)$ so that $E_+(R_0)=E_{nK}(\xi_t)$. Also pick a sequence $G_i\in\Omega(X,\xi_t)$ so that $\int_{Y_1}G_i^{n-1}\to\vol_{X|Y_1}(\xi_t)$. Applying Lemma~\ref{lem:sing-decrease} inductively, we can find $R_i\in\Omega(X,\xi_t)$ that is less singular than $T_i-t[Y_1]$, $G_i$ and $R_{i-1}$. So $R_i\in\xi_t$ is an increasing sequence of K\"ahler currents with analytic singularities precisely on $E_{nK}(\xi_t)$, showing (1). That (2) holds is due to \cite[Theorem 1.2]{WN19b} and (3) holds thanks to Property~\hyperref[conj:vol-equality]{$B_{n-1}$}. It remains to argue (4).

Let $\nu'_{Y_\bullet}:\mathcal{A}(Y_1,\xi_t|_{Y_1})\to\QQ^{n-1}_{\geq 0}$ denote the valuative map constructed using the flag $Y_1\supset Y_2\supset\dots\supset Y_n$ on $Y_1$. Then one has that $a'_i\in\nu'_{Y_\bullet}(\mathcal{A}(Y_1,\xi_t|_{Y_1};R_i|_{Y_1}))$, since $(T_i-t[Y_1])|_{Y_1}\in \mathcal{A}(Y_1,\xi_t|_{Y_1};R_i|_{Y_1})$ and $a_i'=\nu'_{Y_\bullet}((T_i-t[Y_1])|_{Y_1})$. Thus, $a_i'\in\Delta_{t,i}$ by Lemma~\ref{lma:pob_loc_analytic}, which implies that 
$\Delta_t(\xi)\subseteq\overline{\bigcup_i\Delta_{t,i}}$ by Lemma~\ref{lem:Omega-t-dense-in-Delta-t}. 

Next, we show that $\Delta_{t,i}\subseteq\Delta_t(\xi)$ holds for all $i$. By Theorem~\ref{thm:liftflaggeneral}, there are modifications $\pi_i:X_i\rightarrow X$ such that 
    \begin{enumerate}[(i)]
        \item $\pi_i^*R_i=\beta_i+[D_i]$, where $\beta_i$ is a smooth semipositive form on $X_i$ and $D_i$ is an effective $\QQ$-divisor on $X_i$;
        \item The flag $Y_{\bullet}$ admits a lifting $(Y_\bullet^{(i)},g_i^{-1})$ to $X_i$.
    \end{enumerate}

Note that $Y^{(i)}_1$ is the strict transform of $Y_1$, and it is not contained in $D_i$ (as $\nu(R_i,Y_1)=0$). 
By the Lelong--Poincar\'e formula, the restricted current
$
[D_i]|_{Y_1^{(i)}}
$
only has divisorial components in its Siu decomposition.
Let $\nu'_{Y^{(i)}_\bullet}$ be the $\QQ^{n-1}_{\geq 0}$-valued map determined by the flag $Y_1^{(i)}\supset\dots\supset Y^{(i)}_n$ on $Y_1^{(i)}$. By Lemma~\ref{lma:flagvaluationchange}, there is  $h_i\in \mathrm{SL}_{n-1}(\ZZ)$ such that 
\[
\nu'_{Y_\bullet^{(i)}}(\pi_i^*T)h_i=\nu'_{Y_\bullet}(T)\text{ for all }T\in\Omega(Y_1,\xi_t|_{Y_1}).
\]
From the proof of Lemma~\ref{lma:flagvaluationchange}, we see that $h_i$ is obtained from $g_i$ with the first row and the first column removed.

Now, for any $T\in\Omega(Y_1,\xi_t|_{Y_1};R_i|_{Y_1})$, Lemma~\ref{lem:Siu-deomp-lem} implies that $S_i\coloneqq \pi^*_i T-[D_i]|_{Y_1^{(i)}}$ is a positive current on $Y^{(i)}_1$ in the semipositive class $\{\beta_i|_{Y_1^{(i)}}\}$ and $S_i$ has analytic singularities. 

Fix a smooth K\"ahler form $\omega_i$ on $X_i$. Then for any $\varepsilon>0$, $S_{i,\varepsilon}\coloneqq S_i+\varepsilon\omega_i|_{Y_1^{(i)}}$ is a K\"ahler current on $Y^{(i)}_1$ in the K\"ahler class $(\beta_i+\varepsilon\omega_i)|_{Y^{(i)}_1}$ with analytic singularities. Note that (merely from Definition~\ref{def:analy-sing}) it is unclear if $S_{i,\varepsilon}$ has gentle analytic singularities on $Y^{(i)}_1$ or not, but thanks to Theorem~\ref{thm:Dem-reg} one can approximate $S_{i,\varepsilon}$ by a sequence, say $\{S_{i,\varepsilon,k}\}_{k\in\NN}$, with gentle analytic singularities on $Y^{(i)}_1$.

Applying Theorem~\ref{thm:CT-thm-refined''}, we get  $\tilde S_{i,\varepsilon,k}\in\{\beta_i+\varepsilon\omega_i\}$, currents on $X_i$, extending  $S_{i,\varepsilon,k}$ for each $k$. Moreover, $\tilde S_{i,\varepsilon,k}$ has analytic singularities on $X_i$. Then (the proof of) Lemma~\ref{lem:omega-dense-in-gamma} implies that
\begin{align*}
(0,\nu'_{Y^{(i)}_\bullet}(S_{i}))=(0,\nu'_{Y^{(i)}_\bullet}(S_{i,\varepsilon}))&=(0,\lim_{k\to\infty}\nu'_{Y^{(i)}_\bullet}(S_{i,\varepsilon,k})) \\&=\lim_{k\to \infty}\nu_{Y^{(i)}_\bullet}(\tilde S_{i,\varepsilon,k})\in\Delta_{Y^{(i)}_\bullet}(\{\beta_i+\varepsilon\omega_i\})
\end{align*}
for any $\varepsilon>0$. By Proposition~\ref{prop:body-properties}(3) we can let $\varepsilon\to 0$ to find that 
\[
\big(0,\nu'_{Y^{(i)}_\bullet}(\pi^*_i T)\big)\in\Delta_{Y_\bullet^{(i)}}(\{\beta_i\})\cap\{0\}\times\RR^{n-1}+\nu_{Y^{(i)}_\bullet}([D_i]),
\]
where we used  $(0,\nu'_{Y^{(i)}_\bullet}([D_i]|_{Y_1^{(i)}}))=\nu_{Y_\bullet^{(i)}}([D_i])$.
Since $ (0,\nu'_{Y_\bullet}(T))=(0,\nu'_{Y^{(i)}_\bullet}(\pi^*_i T)h_i)=(0,\nu'_{Y^{(i)}_\bullet}(\pi^*_i T))g_i$, we have that 
\begin{equation*}
        (0,\nu'_{Y_\bullet}(T))\in \big(\Delta_{Y_\bullet^{(i)}}(\{\beta_i\})\cap\{0\}\times\RR^{n-1}+\nu_{Y^{(i)}_\bullet}([D_i])\big)g_i.
\end{equation*}
Thus, we see that
\[
\{0\}\times\Delta_{t,i}\subseteq\big(\Delta_{Y_\bullet^{(i)}}(\{\beta_i\})\cap\{0\}\times\RR^{n-1}+\nu_{Y^{(i)}_\bullet}([D_i])\big)g_i.
\]

On the other hand, for any $G\in\Omega(X_i,\{\beta_i\})$ with $\nu(G,Y_1^{(i)})=0$, $G+[D_i]$ is an element in $\Omega(\pi^*_i\xi_t)$. One can then apply Proposition~\ref{prop:approx-push-forward-current} to find a sequence $G_k\in\mathcal{A}(X,\xi_t)$ such that
\begin{align*}
        \nu_{Y^{(i)}_\bullet}(G+[D_i])&=\lim_{k\to\infty}\nu_{Y^{(i)}_\bullet}(\pi^*_i G_k)\\ &=\lim_{k\to\infty}\nu_{Y_\bullet}(G_k)g_i^{-1}\in \big(\{0\}\times\Delta_t(\xi)\big)g_i^{-1},
\end{align*}
where the last inclusion follows from Lemma~\ref{lem:Omega-t-dense-in-Delta-t} (since $G_k+t[Y_1]\in\mathcal{A}_t(X,\xi)$, as $t \in \mathbb{Q}$). Thus,
\[
\big(\Delta_{Y_\bullet^{(i)}}(\{\beta_i\})\cap\{0\}\times\RR^{n-1}+\nu_{Y^{(i)}_\bullet}([D_i])\big)g_i\subseteq\{0\}\times\Delta_t(\xi).
\]
Therefore,
$
\Delta_{t,i}\subseteq\Delta_t(\xi)
$
follows, as claimed.

From the above discussions we conclude that $\Delta_t(\xi)=\overline{\bigcup_i\Delta_{t,i}}$.
Moreover, since $R_i$ is an increasing sequence, we see that $\Delta_{t,i}$ is an increasing sequence of $(n-1)$-dimensional convex bodies contained in $\Delta_t(\xi)$. So we deduce that $\vol_{\RR^{n-1}}(\Delta_{t,i})\nearrow\vol_{\RR^{n-1}}(\Delta_t(\xi))$, concluding the proof of (4).
\end{proof}

We summarize our findings in the next result that proves both Theorem~\ref{thm:main} and Corollary~\ref{cor:main}:

\begin{theorem}\label{thm:volumeidentity}
Let $\xi$ be a big cohomology class on $X$. Let $Y_\bullet$ be a flag on $X$. Then one can associate to it a natural compact convex set $\Delta_{Y_\bullet}(\xi)$ with $\xi$ so that 
\[
\vol_{\RR^n}(\Delta_{Y_\bullet}(\xi))=\frac{1}{n!}\vol(\xi).
\]

\end{theorem}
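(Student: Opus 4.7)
The plan is to prove the theorem by induction on $n=\dim X$, leveraging the equivalences and implications established in the preceding sections. The natural compact convex set in the statement is the Okounkov body $\Delta_{Y_\bullet}(\xi)$ from Definition~\ref{def:Ok-body} in the big case, and its extension via \eqref{eq: psef_Okounkov} in the pseudoeffective case. By Proposition~\ref{prop:delta-is-convex-body} it is indeed a convex body when $\xi$ is big.

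For the big case, I would run the induction on $n$ as follows. The base case $n=1$ is Lemma~\ref{lem: A_1}, which establishes Property~\hyperref[conj:vol-equality]{$A_1$}. For the inductive step, assume Property~\hyperref[conj:vol-equality]{$A_{n-1}$} holds. By Lemma~\ref{lma:rel-vol-conj=vol-conj}, this is equivalent to Property~\hyperref[conj:relative-Oko-vol-equal]{$B_{n-1}$}. Lemma~\ref{lma:A-n-1=>C-n} then delivers Property~\hyperref[conj:restricted-vol-equality]{$C_n$}, and finally Lemma~\ref{lma:CnimplyAn} upgrades $C_n$ to Property~\hyperref[conj:vol-equality]{$A_n$}. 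This closes the induction and establishes the volume identity $\vol_{\RR^n}(\Delta_{Y_\bullet}(\xi))=\frac{1}{n!}\vol(\xi)$ for every big class $\xi$ on $X$.

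For the pseudoeffective (non-big) case, I would proceed by approximation. If $\xi$ is pseudoeffective but not big, then $\vol(\xi)=0$ by definition. Fix a K\"ahler form $\omega$ on $X$. For every $\varepsilon>0$, the class $\xi+\varepsilon\{\omega\}$ is big, so by the big case already established,
\[
\vol_{\RR^n}(\Delta(\xi+\varepsilon\{\omega\}))=\tfrac{1}{n!}\vol(\xi+\varepsilon\{\omega\}).
\]
By definition $\Delta(\xi)=\bigcap_{\varepsilon>0}\Delta(\xi+\varepsilon\{\omega\})$ is the decreasing intersection of compact convex sets, and by continuity of the volume function on $H^{1,1}(X,\RR)$ \cite{Bou02b} the right-hand side tends to $\tfrac{1}{n!}\vol(\xi)=0$ as $\varepsilon\to 0$. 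Combined with dominated convergence (or the monotonicity of Lebesgue measure under decreasing intersection of compact sets), this yields $\vol_{\RR^n}(\Delta(\xi))=0=\tfrac{1}{n!}\vol(\xi)$, proving Corollary~\ref{cor:main}.

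The main obstacle is entirely absorbed into Lemma~\ref{lma:A-n-1=>C-n}, which is where all the technical machinery of the paper converges: one must construct an exhausting sequence $R_i\in\Omega(X,\xi-t\{Y_1\})$ whose partial Okounkov bodies on $Y_1$ (computed from $R_i|_{Y_1}$) both approximate the slice $\Delta_t(\xi)$ from within and are dominated by it. Producing elements of $\Delta_t(\xi)$ from K\"ahler currents on $Y_1$ requires the extension theorem (Theorem~\ref{thm:CT-thm-refined''}) to lift K\"ahler currents with gentle analytic singularities on $Y_1$ to K\"ahler currents on $X$ with analytic singularities, while the converse direction relies on the bimeromorphic lifting of flags (Theorem~\ref{thm:liftflaggeneral}) and the pushforward result of Lemma~\ref{lem:push-forward-has-analy-sing} via Proposition~\ref{prop:approx-push-forward-current}. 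At this point the final theorem follows as a clean assembly of the induction above.
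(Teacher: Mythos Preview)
Your proposal is correct and follows essentially the same approach as the paper: the paper's proof also runs the induction via the chain $A_1$ (Lemma~\ref{lem: A_1}), $A_{n-1}\Leftrightarrow B_{n-1}$ (Lemma~\ref{lma:rel-vol-conj=vol-conj}), $B_{n-1}\Rightarrow C_n$ (Lemma~\ref{lma:A-n-1=>C-n}), $C_n\Rightarrow A_n$ (Lemma~\ref{lma:CnimplyAn}), and treats the pseudoeffective case by the same approximation argument in the subsequent corollary. Your closing paragraph accurately summarizes the technical content of Lemma~\ref{lma:A-n-1=>C-n}, though strictly speaking that discussion belongs to the proof of that lemma rather than to the present theorem.
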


\begin{proof}Let $n=\dim X$. Let us first assume that $\xi$ is big. We will show that Property~\hyperref[conj:vol-equality]{$A_n$} and Property~\hyperref[conj:relative-Oko-vol-equal]{$B_n$} hold for any dimension $n\geq1$ and Property~\hyperref[conj:restricted-vol-equality]{$C_n$} holds for any $n\geq2$. 
We have proved the following implications:
\begin{enumerate}
    \item \hyperref[conj:vol-equality]{$A_n$}$\iff$\hyperref[conj:relative-Oko-vol-equal]{$B_n$} for $n\geq 1$ by Lemma~\ref{lma:rel-vol-conj=vol-conj};
    \item \hyperref[conj:relative-Oko-vol-equal]{$B_{n-1}$}$\implies$\hyperref[conj:restricted-vol-equality]{$C_n$} for $n\geq 2$ by Lemma~\ref{lma:A-n-1=>C-n};
    \item \hyperref[conj:restricted-vol-equality]{$C_n$}$\implies$\hyperref[conj:vol-equality]{$A_n$} for $n\geq 2$ by Lemma~\ref{lma:CnimplyAn}.
\end{enumerate}
By Lemma~\ref{lem: A_1}  Property~\hyperref[conj:vol-equality]{$A_1$} holds, so we conclude the big case by induction.
\end{proof}

As a simple corollary, we obtain the result for pseudoeffective classes as well:

\begin{corollary}\label{cor:volumeidentity}
Let $\xi$ be a pseudoeffective cohomology class on $X$. Let $Y_\bullet$ be a flag on $X$. Then one can associate to it a natural compact convex set $\Delta_{Y_\bullet}(\xi)$ with $\xi$ so that 
\[
\vol_{\RR^n}(\Delta_{Y_\bullet}(\xi))=\frac{1}{n!}\vol(\xi).
\]

\end{corollary}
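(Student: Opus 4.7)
The plan is to reduce the pseudoeffective case to the big case via a simple approximation argument, exploiting the definition of $\Delta(\xi)$ in \eqref{eq: psef_Okounkov} together with the continuity of the volume function proved by Boucksom. There are essentially two cases to consider: when $\xi$ is big, the statement is just Theorem~\ref{thm:volumeidentity}; when $\xi$ is pseudoeffective and non-big, both sides must be shown to vanish.

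First I would treat the big case, which is immediate: one needs only verify that the definition \eqref{eq: psef_Okounkov} is consistent with the big-case definition when $\xi$ is big. This is exactly the content of Proposition~\ref{prop:body-properties}(3), so in that case $\vol_{\RR^n}(\Delta_{Y_\bullet}(\xi)) = \frac{1}{n!}\vol(\xi)$ follows from Theorem~\ref{thm:volumeidentity}.

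Next I would handle the pseudoeffective non-big case. Fix a K\"ahler form $\omega$ on $X$. For every $\varepsilon > 0$, the class $\xi + \varepsilon\{\omega\}$ is big, so by Theorem~\ref{thm:volumeidentity}
\[
\vol_{\RR^n}(\Delta_{Y_\bullet}(\xi + \varepsilon\{\omega\})) = \frac{1}{n!}\vol(\xi + \varepsilon\{\omega\}).
\]
By the continuity of $\vol(\cdot)$ on $H^{1,1}(X,\RR)$ established in \cite{Bou02b}, the right-hand side converges to $\frac{1}{n!}\vol(\xi) = 0$ as $\varepsilon \searrow 0$. On the other hand, $\Delta_{Y_\bullet}(\xi + \varepsilon\{\omega\})$ is a decreasing family of compact convex sets in $\RR^n$ (by Proposition~\ref{prop:body-properties}(2)) whose intersection is $\Delta_{Y_\bullet}(\xi)$. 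By continuity of Lebesgue measure from above for decreasing sequences of compact sets,
\[
\vol_{\RR^n}(\Delta_{Y_\bullet}(\xi)) = \lim_{\varepsilon \searrow 0}\vol_{\RR^n}(\Delta_{Y_\bullet}(\xi + \varepsilon\{\omega\})) = 0,
\]
completing the proof.

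There is no real obstacle here — the whole content is packaged into Theorem~\ref{thm:volumeidentity}, the continuity of the transcendental volume function, and the definition \eqref{eq: psef_Okounkov}. The only point requiring a moment of care is confirming that $\vol_{\RR^n}$ is continuous under decreasing intersections of the compact convex sets $\Delta_{Y_\bullet}(\xi+\varepsilon\{\omega\})$, which is standard measure theory.
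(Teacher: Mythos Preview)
Your proof is correct and follows essentially the same approach as the paper: reduce to the big case via Theorem~\ref{thm:volumeidentity}, and in the non-big pseudoeffective case use the definition \eqref{eq: psef_Okounkov} together with Boucksom's continuity of the volume to force both sides to vanish. The paper's version is slightly terser, using only the inclusion $\Delta_{Y_\bullet}(\xi)\subseteq \Delta_{Y_\bullet}(\xi+\varepsilon\{\omega\})$ (hence a one-sided volume inequality) rather than full continuity of Lebesgue measure under decreasing intersections, but the substance is identical.
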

\begin{proof} Due to the previous theorem, we can assume that $\xi$ is pseudoeffective but not big. In this case $\vol(\xi)=0$. By the definition of $\Delta_{Y_\bullet}(\xi)$ (recall \eqref{eq: psef_Okounkov}) we obtain that 
\[
\vol_{\RR^n}(\Delta_{Y_\bullet}(\xi))\leq \vol_{\RR^n}(\Delta_{Y_\bullet}(\xi + \varepsilon \{\omega\}))\to 0,
\]
thanks to the continuity of the volume functional on the pseudoeffective cone \cite{Bou02b}.
\end{proof}

\section{Moment bodies}\label{sect_moment1}

The purpose of this section is to give an alternative approach to defining a convex body associated with a big cohomology class $\xi$ on a connected compact K\"ahler manifold $X$. Instead of using valuation vectors $\nu(T)$ of currents $T\in \Omega(X,\xi)$, we consider certain iterated toric degenerations, where say $(X,T)$ degenerates to $(\mathbb{P}^n,T_n)$, $T_n$ being some closed positive toric $(1,1)$-current. 
Since $T_n$ is toric it has an associated moment body $\Delta(T_n)\subseteq \RR^n$ (as explained below). We then define the moment body $\Delta^{\mu}(\xi)$ as the closure of the union of the moment bodies $\Delta(T_n)$ of all toric degenerations. In the end we will show that this moment body actually coincides with the Okounkov body $\Delta(\xi)$, thus establishing a link between transcendental Okounkov bodies and toric degenerations. 

\paragraph{Moment bodies of toric currents on $\PP^n$.}

Let $T$ be a closed positive $(1,1)$-current on $\PP^n$ which is toric (i.e. $(S^1)^n$-invariant) with respect to the standard toric action on $\PP^n$. On $\CC^n\subset \PP^n$ we can then write $T=\ddc\phi$ where $\phi$ is a toric psh function on $\mathbb{C}^n$, and  $u_{\phi}(x_1,\dots,x_n)\coloneqq \phi(\mathrm{e}^{x_1/2},\dots,\mathrm{e}^{x_n/2})$ is a convex function on $\RR^n$.

\begin{definition} The \emph{moment body} $\Delta(T)$ of $T$ is the image of $\nabla u_{\phi}$, the subgradient map of $u_{\phi}$: 
\[
\Delta(T)\coloneqq \overline{\nabla u_{\phi}(\RR^n)}.
\]
\end{definition}

If $T$ is Kähler on $(\CC^*)^n$ then in particular $((\CC^*)^n,T)$ is a toric symplectic manifold, and $\mu(\mathrm{e}^{(x_1+iy_1)/2},\dots,\mathrm{e}^{(x_n+iy_n)/2})\coloneqq \nabla u_{\phi}(x_1,\dots,x_n)$ is the so-called \emph{moment map}, explaining our terminology (see e.g. \cite{CdS19}).

If $\{T\}=c_1(\mathcal{O}(k))$ and $\int_{\PP^n}T^n>0$, then the moment body $\Delta(T)$ coincides with the partial Okounkov body of the singular metric corresponding to $T$ in the sense of \cite{Xia21}, as proved in \cite[Theorem~8.3]{Xia21}.

Let us now record three well-known facts, whose proofs we give for the convenience of the reader.

\begin{lemma} \label{Lem:convmoment}
    The moment body $\Delta(T)$ is convex.
\end{lemma}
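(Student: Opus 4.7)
The plan is to identify $\Delta(T)$ with the closure of the effective domain of the Legendre--Fenchel conjugate of $u_\phi$, which is manifestly convex by general convex analysis.

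Concretely, I would introduce
\[
u_\phi^*(p) \coloneqq \sup_{x\in\RR^n}\big(\langle p,x\rangle - u_\phi(x)\big), \quad p\in\RR^n,
\]
a convex function on $\RR^n$ with values in $(-\infty,+\infty]$, and let $D \coloneqq \{p\in\RR^n : u_\phi^*(p)<+\infty\}$ be its effective domain. As $u_\phi^*$ is convex, $D$ is a convex subset of $\RR^n$, hence so is its closure $\overline D$. I would then prove $\Delta(T)=\overline D$, which yields the desired convexity.

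The inclusion $\nabla u_\phi(\RR^n)\subseteq D$ is immediate: if $p\in\partial u_\phi(x_0)$, then $u_\phi(y)-\langle p,y\rangle \geq u_\phi(x_0)-\langle p,x_0\rangle$ for all $y\in\RR^n$, so $u_\phi^*(p)<+\infty$. For the reverse inclusion up to closure, I would show $\mathrm{int}(D)\subseteq\nabla u_\phi(\RR^n)$. If $p\in\mathrm{int}(D)$, then $p\pm\varepsilon e_i\in D$ for all $i$ and some $\varepsilon>0$, which implies the convex function $v(x)\coloneqq u_\phi(x)-\langle p,x\rangle$ satisfies $v(x)\geq\varepsilon\|x\|_\infty-C$ for a constant $C$; such a coercive lower semicontinuous convex function attains its infimum at some $x_p\in\RR^n$, whence $p\in\partial u_\phi(x_p)$. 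Combining these two inclusions with the identity $\overline{\mathrm{int}(D)}=\overline D$ (valid when $D$ has nonempty interior) yields $\Delta(T)=\overline D$.

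The only subtlety is the possibly degenerate case when $D$ has empty interior, i.e., lies in a proper affine subspace $L\subset\RR^n$; this is handled by running the same argument inside $L$, replacing $\mathrm{int}(D)$ by the relative interior $\mathrm{ri}(D)$. This is entirely standard in convex analysis and presents no real obstacle.
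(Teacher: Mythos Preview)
Your proof is correct and takes a genuinely different route from the paper's. The paper argues directly: given $y_1,y_2\in\nabla u_\phi(\RR^n)$, it perturbs $u_\phi$ by $\epsilon v$ for a smooth convex $v$ with $\nabla v(\RR^n)=B_1$, observes that $u_\phi(x)-x\cdot(ty_1+(1-t)y_2)+\epsilon v(x)$ is proper and hence attains a minimum, so $ty_1+(1-t)y_2\in\nabla u_{\phi,\epsilon}(\RR^n)\subseteq\nabla u_\phi(\RR^n)+B_\epsilon$, and then lets $\epsilon\to 0$. Your approach instead identifies $\Delta(T)$ with the closure of the effective domain of the Legendre--Fenchel conjugate $u_\phi^*$, which is convex for free. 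This is more conceptual and situates the result in standard convex analysis (the chain $\mathrm{ri}(\mathrm{dom}\,u_\phi^*)\subseteq\mathrm{range}(\partial u_\phi)\subseteq\mathrm{dom}\,u_\phi^*$ is essentially Rockafellar, \emph{Convex Analysis}, Theorem~23.4 applied to $u_\phi^*$), at the cost of invoking that background. The paper's argument is more self-contained and elementary; yours is cleaner and explains \emph{what} $\Delta(T)$ is, not just that it is convex. One minor remark: since $u_\phi$ is finite everywhere it is actually continuous, not merely lower semicontinuous, so your coercivity step goes through without any subtlety.
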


\begin{proof}
Let $u\coloneqq u_{\phi}$. Note that $u$ is convex and finite, and hence continuous. Let $v$ be a smooth convex function on $\RR^n$ such that $\nabla(v)(\RR^n)=B_1$, where $B_1$ denotes the unit ball. Let $\epsilon>0$ and $u_{\epsilon}\coloneqq u+\epsilon v$. Let $y_1,y_2\in \nabla u(\RR^n)$. We then get that the functions $u(x)-x\cdot y_1$ and $u(x)-x\cdot y_2$ are both bounded from below, and thus $u_{\epsilon}(x)-x\cdot y_1$ and $u_{\epsilon}(x)-x\cdot y_1$ are both proper and continuous. Given $t\in[0,1]$ it follows that 
\[
t(u_{\epsilon}(x)-x\cdot y_1)+(1-t)(u_{\epsilon}(x)-x\cdot y_2)=u_{\epsilon}(x)-x\cdot(ty_1+(1-t)y_2)
\]
also is proper and continuous, and hence achieves a minumum at some point $x_0$. One then sees that $ty_1+(1-t)y_2$ is a subgradient of $u_{\epsilon}$ at $x_0$, thus $ty_1+(1-t)y_2\in \nabla u_{\epsilon}(\RR^n)$ for all $\epsilon>0$. Now we note that $\nabla u_{\epsilon}(\RR^n)\subseteq \nabla u(\RR^n)+B_{\epsilon}$. It follows that $ty_1+(1-t)y_2\in \nabla u(\RR^n)+B_{\epsilon}$ for all $\epsilon>0$, and hence $ty_1+(1-t)y_2\in \overline{\nabla u(\RR^n)}$.

If $y_1,y_2\in \overline{\nabla u(\RR^n)}$ then $y_1$ and $y_2$ can be approximated by $y'_1,y'_2\in \nabla u(\RR^n)$, hence $ty'_1+(1-t)y'_2\in \overline{\nabla u(\RR^n)}$, and going to the limit yields that $ty_1+(1-t)y_2\in \overline{\nabla u(\RR^n)}$. This shows that $\Delta(T)=\overline{\nabla u(\RR^n)}$ is convex. 
\end{proof} 

\begin{lemma} \label{Lem:inclmoment}
Suppose that $T_1$ and $T_2$ are two toric closed positive $(1,1)$-currents on $\mathbb{P}^n$. Assume that $T_1$ is more singular than $T_2$, then $\Delta(T_1)\subseteq \Delta(T_2)$.
\end{lemma}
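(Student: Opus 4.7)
The plan is to transfer the inclusion of moment bodies to a comparison of convex functions on $\RR^n$ and argue by a perturbation trick analogous to the proof of Lemma~\ref{Lem:convmoment}. Write $T_i = \ddc \phi_i$ on the affine chart $\CC^n \subset \PP^n$ with $\phi_i$ toric psh, and let $u_i(x) \coloneqq \phi_i(e^{x_1/2},\ldots,e^{x_n/2})$ be the associated finite convex functions on $\RR^n$. After choosing compatible smooth toric reference forms (i.e.\ working within a fixed cohomology class, as is implicit in the toric degeneration setup of this section), the hypothesis $T_1 \preceq T_2$ translates into the pointwise bound $u_1 \leq u_2 + C$ on $\RR^n$ for some constant $C$.

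Next I would exploit the well-known fact that $y \in \nabla u_1(\RR^n)$ is equivalent to $x \mapsto u_1(x) - y\cdot x$ attaining its infimum at some $x_0$ (namely a point where $y$ is a subgradient of $u_1$). The subgradient inequality then gives $u_1(x) \geq u_1(x_0) + y\cdot(x - x_0)$ for all $x \in \RR^n$, and combining with $u_1 \leq u_2 + C$ yields that $u_2(x) - y\cdot x$ is also bounded below on $\RR^n$.

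To promote this one-sided bound to an actual subgradient statement for $u_2$, I would reuse the perturbation $w$ from the proof of Lemma~\ref{Lem:convmoment}: a smooth strictly convex function with $\nabla w(\RR^n) = B_1$, which is in particular proper. For every $\epsilon > 0$, the function $u_2 + \epsilon w - y\cdot x$ is then continuous, convex, and proper, so it achieves a minimum at some $x_\epsilon$. The subgradient equation at $x_\epsilon$ gives $y - \epsilon \nabla w(x_\epsilon) \in \partial u_2(x_\epsilon) \subseteq \nabla u_2(\RR^n)$, and since $|\epsilon \nabla w(x_\epsilon)| < \epsilon$, letting $\epsilon \to 0$ places $y$ in $\overline{\nabla u_2(\RR^n)} = \Delta(T_2)$. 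Taking closure over $y \in \nabla u_1(\RR^n)$ then delivers the desired inclusion $\Delta(T_1) \subseteq \Delta(T_2)$.

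The step I expect to require the most care is precisely the passage from ``$u_2(x) - y\cdot x$ is bounded below'' to ``$y$ lies within $\epsilon$ of $\nabla u_2(\RR^n)$'': without the coercive perturbation $\epsilon w$, the infimum of $u_2 - y\cdot x$ need not be attained, so one cannot directly extract a subgradient at $y$. The function $w$ plays exactly the role it does in Lemma~\ref{Lem:convmoment}, forcing properness at the cost of perturbing the gradient by a controllable amount, so no genuinely new tool is required.
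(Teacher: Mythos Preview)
Your argument is essentially identical to the paper's: both reduce to $u_1 \leq u_2 + C$, pick $y \in \nabla u_1(\RR^n)$ so that $u_2(x) - y\cdot x$ is bounded below, add the same perturbation $\epsilon v$ (your $w$) with $\nabla v(\RR^n)=B_1$ to force properness, extract a minimum $x_\epsilon$ where $y - \epsilon\nabla v(x_\epsilon) \in \partial u_2(x_\epsilon)$, and let $\epsilon \to 0$. Your explicit remark that the bound $u_1 \leq u_2 + C$ relies on working in a fixed cohomology class is a reasonable caveat (and is how the lemma is actually applied later), but otherwise nothing differs from the paper's proof.
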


\begin{proof}
Let $u_1\coloneqq u_{\phi_1}$ and $u_2\coloneqq u_{\phi_2}$ and let $v$ be as in the proof of Lemma~\ref{Lem:convmoment}. For $\epsilon>0$ let $u_{1,\epsilon}\coloneqq u_1+\epsilon v$ and $u_{2,\epsilon}\coloneqq u_2+\epsilon v$.  Let $y\in \nabla u_1(\RR^n)$. It follows then that the function $u_1(x)-x\cdot y$ is bounded from below, and since $u_1\leq u_2 + C$ we get that $u_{2,\epsilon}(x)-x\cdot y$ is proper and continuous for all $\epsilon>0$. Hence, $u_{2,\epsilon}(x)-x\cdot y$ achieves its minimum at some point $x_{\epsilon}$, and there $y$ will be a subgradient of $u_{2,\epsilon}$. Thus, $y\in \nabla u_2(\RR^n)+B_{\epsilon}$ for all $\epsilon$, and hence $y\in \overline{\nabla u_2(\RR^n)}$. We have thus shown that $\overline{\nabla u_1(\RR^n)}\subseteq \overline{\nabla u_2(\RR^n)}$, 
 i.e., $\Delta(T_1)\subseteq \Delta(T_2)$.
\end{proof}

\begin{proposition} \label{prop:momcureq}Under the assumptions as above,
 \[
 \vol_{\RR^n}(\Delta(T))=\frac{1}{n!}\int_{\PP^n}T^n.
 \]
\end{proposition}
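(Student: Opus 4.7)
The plan is to reduce the identity to a direct change of variables computation relating the complex Monge--Amp\`ere of $\phi$ on the open orbit $(\CC^*)^n \subset \PP^n$ to the real Monge--Amp\`ere of the convex function $u = u_\phi$, and then to invoke the classical area formula. The complement $\PP^n \setminus (\CC^*)^n$ is a union of complex hyperplanes and hence pluripolar; since $T^n$ is a non-pluripolar product \cite{BEGZ10}, we have $\int_{\PP^n} T^n = \int_{(\CC^*)^n} T^n$.

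Assume first that $u$ is smooth and strictly convex on $\RR^n$. Introduce the logarithmic coordinates $z_j = \exp(x_j/2 + i\theta_j)$ on $(\CC^*)^n$, so $(x,\theta) \in \RR^n \times (\RR/2\pi\ZZ)^n$. Computing the complex Hessian gives $\phi_{j\bar k} = u_{jk}(x)/(z_j\bar z_k)$, and since $i\,dz_j\wedge d\bar z_j = |z_j|^2\,dx_j\wedge d\theta_j$, the $|z_j|^2$ factors cancel and the standard formula for the top complex Monge--Amp\`ere yields
\begin{equation*}
T^n = \frac{n!}{(2\pi)^n}\det D^2u(x)\,dx\wedge d\theta.
\end{equation*}
Integrating $\theta$ out over $[0,2\pi)^n$ and then applying the classical change of variables $y = \nabla u(x)$ (a diffeomorphism onto $\nabla u(\RR^n)$ with Jacobian determinant $\det D^2 u$) gives
\begin{equation*}
\int_{(\CC^*)^n} T^n = n!\int_{\RR^n}\det D^2 u\,dx = n!\vol_{\RR^n}(\nabla u(\RR^n)) = n!\vol_{\RR^n}(\Delta(T)),
\end{equation*}
which is the desired identity in the smooth strictly convex case.

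For a general toric current $T$ the convex function $u$ may fail to be smooth or strictly convex. Approximate $u$ by $u_\epsilon \coloneqq u \star \rho_\epsilon + \epsilon v$, where $\rho_\epsilon$ is a standard mollifier and $v$ is the auxiliary smooth strictly convex function with $\nabla v(\RR^n) = B_1$ already used in the proofs of Lemmas~\ref{Lem:convmoment} and \ref{Lem:inclmoment}. Each $u_\epsilon$ is smooth strictly convex with the same asymptotic cone as $u$ up to an $O(\epsilon)$ correction, so the associated toric currents $T_\epsilon$ extend to closed positive currents on $\PP^n$, with $\{T_\epsilon\} \to \{T\}$ and $\phi_\epsilon \to \phi$ locally uniformly on $(\CC^*)^n$. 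Applying the smooth case to each $T_\epsilon$ gives $\int_{(\CC^*)^n} T_\epsilon^n = n!\vol_{\RR^n}(\nabla u_\epsilon(\RR^n))$, and passing to the limit finishes the argument: on the moment body side we use the Hausdorff convergence $\overline{\nabla u_\epsilon(\RR^n)} \to \Delta(T)$ established by the argument of Lemmas~\ref{Lem:convmoment}--\ref{Lem:inclmoment}, while on the complex Monge--Amp\`ere side we use Bedford--Taylor continuity for the locally bounded restriction of $\phi$ to $(\CC^*)^n$ combined with the BEGZ monotone convergence theorem.

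The main obstacle I anticipate is this limit step: one must rule out that positive mass of $T_\epsilon^n$ escapes to the pluripolar boundary $\PP^n \setminus (\CC^*)^n$ as $\epsilon \to 0$, which is delicate when $T$ carries large Lelong numbers along toric boundary divisors (equivalently, when $u$ has unbounded slopes in certain directions). This should be controlled by the toric invariance, which fibers all measures over $\RR^n$ and reduces the problem to convergence of the convex bodies $\overline{\nabla u_\epsilon(\RR^n)}$, together with the fact that mollification and the $\epsilon v$-perturbation preserve the recession function of $u$ up to an $O(\epsilon)$ error.
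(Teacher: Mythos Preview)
Your approach is essentially the same as the paper's: both pass from the complex Monge--Amp\`ere on $(\CC^*)^n$ to the real Monge--Amp\`ere of the convex function $u_\phi$ on $\RR^n$, and then identify the latter with the volume of the subgradient image. The paper's proof, however, is considerably shorter because it bypasses your approximation step entirely. It invokes the real Monge--Amp\`ere operator $\MA(u)$ in the sense of Alexandrov, which is defined for \emph{arbitrary} convex functions and satisfies $\int_A \MA(u)=\vol_{\RR^n}(\nabla u(A))$ by definition; the bridge $\int_{(\CC^*)^n}(\ddc\phi)^n=n!\int_{\RR^n}\MA(u_\phi)$ in this generality is quoted from \cite[Lemma~2.2]{CGSZ19}. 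Thus no smoothness or strict convexity is needed, and the identity follows in one line.

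Your smooth-case computation is correct, but the limit step you flag as the ``main obstacle'' is genuinely incomplete as written. Bedford--Taylor gives only weak convergence of $(\ddc\phi_\epsilon)^n$ on the noncompact set $(\CC^*)^n$, which does not by itself yield convergence of the total mass; and your appeal to monotonicity is not quite set up, since $u_\epsilon=u\star\rho_\epsilon+\epsilon v$ need not dominate $u$ (the term $\epsilon v$ can be unbounded below). These issues can be repaired, but the cleaner fix is simply to use the Alexandrov definition of $\MA(u)$ from the start, which is exactly what the paper does.
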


\begin{proof}
We have that 
\begin{align*}
    \int_{\PP^n}T^n=\int_{(\CC^*)^n}(\ddc\phi)^n=n!\int_{\RR^n}\MA(u_{\phi})&=n!\vol_{\RR^n}(\nabla u(\RR^n))\\&=n!\vol_{\RR^n}(\Delta(T)),
\end{align*}
where $\MA$ denotes the real Monge--Amp\`ere operator in the sense of Alexandrov.
In the second equality we used the standard relationship between the complex and the real Monge--Amp\`ere operator \cite[Lemma~2.2]{CGSZ19} (note that the normalization of the real Monge--Amp\`ere operator in \cite{CGSZ19} differs from the usual convention by $n!$) while the third equality used the definition of the real Monge--Amp\`ere operator, namely that 
\[
\int_A \MA(u)\coloneqq \vol_{\RR^n}(\nabla u(A))
\]
for any Borel subset $A\subseteq \mathbb{R}^n$.
\end{proof}

\paragraph{Deformation to the normal bundle.}

Recall that if $Y$ is a connected submanifold of a complex manifold $X$, the blow-up of $X\times \mathbb{P}^1$ along $Y\times\{0\}$ defines the total space $\mathcal{X}$ of what is called the (standard) deformation of $X$ to the normal bundle $N_{Y|X}$ of $Y$. There is an obvious morphism $\mathcal{X}\rightarrow \mathbb{P}^1$.
The zero fiber has two components: $Z$ which is isomorphic to the blow-up of $Y$ in $X$, and the exceptional divisor $\mathcal{E}$, which is isomorphic to the projective completion of $N_{Y|X}$. The natural $\mathbb{C}^*$-action on $X\times \mathbb{P}^1$ (i.e. $\tau(z,w)\coloneqq (z,\tau w)$) lifts to $\mathcal{X}$, and it restricts to the inverse $\mathbb{C}^*$-action on the normal bundle (i.e. $\tau$ acts on a fiber by multiplication by $\tau^{-1}$).

\paragraph{Toric degenerations.}

Now assume that $Y_{\bullet}$ is a smooth flag in $X$. Let $\cX^1$ denote the deformation of $X$ to the normal bundle $N_{Y_1|X}$ of $Y_1$, and let $X^1$ denote the exceptional divisor, so that $X^1$ is ismorphic to $\PP(N_{Y_1|X}\oplus \CC)$. Let also $Z^1$ denote the other component of the zero fiber.

If we use the natural identification between $Y_1$ and the zero section of $N_{Y_1|X}$ we can identify $Y_2$ with a submanifold in $X^1$. Thus, we let $\cX^2$ denote the deformation of $X^1$ to the normal bundle $N_{Y_2|X^1}$ of $Y_2$. Also let $X^2$ denote the exceptional divisor, so that $X^2$ is isomorphic to $\PP(N_{Y_2|X^1}\oplus \CC)$. We also let $Z^2$ denote the other component of the zero fiber. As $Y_2$ is fixed by the $\CC^*$-action on $X^1$, the action extends to $\cX^2$. This gives $\cX^2$ a combined $(\CC^*)^2$-action, which also restricts to $X^2$.

Continuing this way we get a sequence of deformation spaces $\cX^i$ for $i=1,\dots,n$, with generic fiber $X^{i-1}$ (with $X^0\coloneqq X$) and exceptional divisor $X^i$ (the other component of the zero fiber is denoted by $Z^i$), such that $\cX^i$ has a $(\CC^*)^i$-action restricting to $X^i$.

Choose local holomorphic coordinates $z_i$ on $X$ centered at $Y_n=x\in X$ such that locally $Y_i=\{z_1=\dots=z_i=0\}$. This induces local holomorphic coordinates $z_i$ on $N_{Y_1|X}$ centered at $Y_n=x\in N_{Y_1|X}$ such that the $\CC^*$-action on the normal bundle acts by $\tau(z_1,\dots,z_n)=(\tau z_1,z_2,\dots,z_n)$. When taking the normal bundle of $Y_2$ we again get an induced set of local holomorphic coordinates $z_i$ centered at $Y_n=x\in N_{Y_2|X^1}$. The $\CC^*$-action inherited from $N_{Y_1|X}$ will still act by $\tau(z_1,\dots,z_n)=(\tau z_1,z_2,\dots,z_n)$, while the new $\CC^*$-action coming from the normal bundle structure will be $\tau(z_1,\dots,z_n)=(\tau z_1,\tau z_2, z_3,\dots,z_n)$. Continuing this way we see that we get local coordinates $z_i$ on each $X^i$ centered at $Y_n=x\in X^i$ adapted to the induced $(\CC^*)^i$-action. 

In the last step we consider the normal bundle of $Y_n=x\in X^{n-1}$, but since $x$ is just a point, $N_{Y_n|X^{n-1}}$ is the tangent space $T_x(X^{n-1})$ of $x$ in $X^{n-1}$. One can see that the induced local coordinates $z_i$ on $T_x(X^{n-1})$ respect the vector space structure, giving us an identification of $T_x(X^{n-1})$ with $\CC^n$ which adapts to the torus action. As $X^n=\PP(N_{Y_n|X^{n-1}}\oplus \CC)$ we see that $X^n \simeq \PP^n$, and we will identify $X^n$ with $\PP^n$ by means of the identification of $T_x(X^{n-1})$ with $\CC^n$. The induced $(\CC^*)^n$-action on $X^n$, described above, acts on $\CC^n\subseteq X^n$ by $(\tau_1,\dots,\tau_n)(z_1,\dots,z_n)=(\tau_1\tau_2\dots\tau_n z_1,\tau_2\tau_3\dots\tau_n z_2,\dots,\tau_n z_n)$. Here we note that this is different from the standard action $(\tau_1,\dots,\tau_n)(z_1,\dots,z_n)=(\tau_1 z_1,\tau_2 z_2,\dots,\tau_n z_n)$. But we note that a current is toric (i.e. $(S^1)^n$-invariant) with respect to the induced action if and only if it is toric with respect to the standard action.

Via the above method $X$ is degenerated (in $n$ steps) to $\PP^n$, without taking account of the class $\xi$. To remedy this, we want to equip the deformation spaces $\cX^i$ with compatible currents, that somehow `metrize' the $n$-step degeneration (in a possibly singular sense).

Note that there is a natural morphism $\mathcal{X}^i\rightarrow \mathbb{P}^1$ for each $i=1,\ldots,n$. For any $c\in \mathbb{P}^1$, we denote the fiber of $\mathcal{X}^i$ over $c$ as $\mathcal{X}^i_c$.
For $i=1,\ldots,n-1$, let $\pi_{X^{i}}:\mathcal{X}^{i+1}\rightarrow X^i$ denote the natural morphism. We write $\pi_{X^{0}}=\pi_X\colon \mathcal{X}^1\rightarrow X$ for the natural projection map. Note that the inclusion $X\hookrightarrow X\times \mathbb{P}^1$ as the fiber over $1$ induces an isomorphism $X\cong \mathcal{X}^1_1$. We will fix this isomorphism and identify $X$ with $\mathcal{X}^1_1$. Similarly, we identify $X^{i-1}$ with $\mathcal{X}^{i}_1$ in the natural way for any $i=2,\ldots,n$.

\begin{definition}\label{def:toricdeg}
A choice of closed positive $(1,1)$-currents $\cT_i$ on $\cX^i$ for each $i=1,\ldots,n$ is called a \emph{toric degeneration} of $(X,\xi)$ (with respect to the flag $Y_{\bullet}$) if: 

\begin{enumerate}
    \item 
$\cT_i$ is $(S^1)^i$-invariant for all $i=1,\ldots,n$;
\item  for all $i=1,\ldots,n$, 
$\cT_i$ has smooth local potentials on the complement of some analytic set $A_i\subset \mathcal{X}^i$ which does not contain (but can intersect) $\cX^i_1$ or $X^i$;
\item ${\cT_1}|_{\cX^1_1}\in \mathcal{Z}_+(X,\xi)$ and  ${\cT_i}|_{\cX^{i}_1}={\cT_{i-1}}|_{X^{i-1}}$ for $i=2,\ldots,n$.
\end{enumerate}

\end{definition}
Note that by the second assumption we are allowed to restrict $\cT_i$ to $\cX^i_1$ and $X^i$.

If we denote by $T_0\coloneqq {\cT_1}|_{\cX^1_1}$ and $T_i\coloneqq {\cT_i}|_{X^i}$ for $i=1,\ldots,n$, we see that the toric degeneration $\cT_{\bullet}$ encodes a degeneration of $(X,T_0)$ to $(\PP^n,T_n)$, and that $T_n$ is a toric current. 
By abuse of language, we will thus also  call $(\PP^n,T_n)$ a \emph{toric degeneration} of $(X,\xi)$. It should be understood that when we refer to a toric degeneration $(\PP^n,T_n)$, the currents $\cT_1,\ldots,\cT_n$ are implicitly chosen. We also say the $\cT_{\bullet}$'s are the \emph{family of currents} associated with the toric degeneration $(\PP^n,T_n)$.

\paragraph{The moment body of $(X,\xi)$.}

We now note that if $(\PP^n,T_n)$ is a toric degeneration of $(X,\xi)$, then $T_n$ will have an associated moment body $\Delta(T_n)$.

\begin{definition}
    We define the moment body $\Delta^{\mu}(\xi)=\Delta^{\mu}_{Y_{\bullet}}(\xi)$ of $\xi$ (with respect to the flag $Y_{\bullet}$) as the closure of the union of the moment bodies $\Delta(T_n)$ of all toric degenerations $(\PP^n,T_n)$ of $(X,\xi)$ (with respect to the flag $Y_{\bullet}$).
\end{definition}

From the definition it is not obvious that the moment body is convex, since the union of convex sets need not be convex. However, the convexity will follow from the next proposition.

\begin{proposition} \label{prop:momunion}
    If $(\PP^n,T_n)$ and $(\PP^n,T'_n)$ are two toric degenerations of $(X,\xi)$ one can find a third toric degeneration $(\PP^n,T''_n)$ of $(X,\xi)$ such that
    \[
    \Delta(T''_n)\supseteq \Delta(T_n)\cup \Delta(T'_n).
    \]
\end{proposition}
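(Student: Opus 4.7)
The strategy is to take the $\ddc$-maximum of the potentials of the two given toric degenerations, constructed level by level. The key input is Lemma~\ref{Lem:inclmoment}: a toric current that is less singular than $T_n$ (resp.\ $T'_n$) has a moment body containing $\Delta(T_n)$ (resp.\ $\Delta(T'_n)$), so it suffices to produce a toric degeneration $(\PP^n,T''_n)$ with $T''_n$ less singular than both.

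First I would arrange that $\cT_i$ and $\cT'_i$ lie in the same cohomology class on each $\cX^i$. This can be done by adding to one of the families appropriate non-negative multiples of pullbacks of the Fubini--Study form on $\PP^1$ via the structure morphisms $\cX^i\to\PP^1$ (and, if necessary, analogous semipositive classes coming from the blowup structure). Such modifications are semipositive closed currents which vanish upon restriction to both $\cX^i_1$ and $X^i$ (being contained in fibers), so they preserve all the compatibility conditions of Definition~\ref{def:toricdeg}. The feasibility of the alignment is verified inductively using the blowup structure $\cX^i=\mathrm{Bl}_{Y_i\times\{0\}}(X^{i-1}\times \PP^1)$ and the fact that at each step the restrictions to $X^{i-1}$ already sit in the same class, constraining the possible cohomological difference.

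Once the classes agree, fix a common smooth $(S^1)^i$-invariant reference form $\theta_i$ in that class, and write $\cT_i=\theta_i+\ddc\varphi_i$, $\cT'_i=\theta_i+\ddc\varphi'_i$ with $\varphi_i,\varphi'_i$ invariant qpsh. After normalizing by additive constants to arrange that the restriction identities $\varphi_i|_{\cX^i_1}=\varphi_{i-1}|_{X^{i-1}}$ and $\varphi'_i|_{\cX^i_1}=\varphi'_{i-1}|_{X^{i-1}}$ hold compatibly (possible since potentials of equal currents coincide up to constants), set
\[
\cT''_i\coloneqq\theta_i+\ddc\max(\varphi_i,\varphi'_i).
\]
This is a closed positive $(S^1)^i$-invariant current. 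Outside the analytic set $A_i\cup A'_i$ both $\varphi_i,\varphi'_i$ are smooth, so the max is continuous there; to secure the strict smoothness condition~(2) I would apply Demailly's regularization (Theorem~\ref{thm:Dem-reg}) to replace $\max(\varphi_i,\varphi'_i)$ by a slightly larger potential with analytic singularities. Since restriction commutes with $\max$ on submanifolds outside the non-smooth locus and the normalization was chosen compatibly, the compatibility condition $\cT''_i|_{\cX^i_1}=\cT''_{i-1}|_{X^{i-1}}$ propagates by induction on $i$.

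Finally, since $\varphi''_n\geq\varphi_n$ and $\varphi''_n\geq\varphi'_n$ on $X^n=\PP^n$, the current $T''_n$ is less singular than both $T_n$ and $T'_n$. Two applications of Lemma~\ref{Lem:inclmoment} then yield $\Delta(T''_n)\supseteq\Delta(T_n)\cup\Delta(T'_n)$, as desired. The main obstacle is the cohomological class alignment: it requires a careful computation showing that the difference $\{\cT_i\}-\{\cT'_i\}$ always lies in a subspace that can be represented by semipositive currents vanishing on $\cX^i_1$ and $X^i$. Should the direct class alignment fail in full generality, a fallback is an approximation argument, constructing a sequence of toric degenerations $\cT''^{(k)}_\bullet$ whose moment bodies Hausdorff-converge to a set containing $\Delta(T_n)\cup\Delta(T'_n)$ and extracting a limit by the compactness of convex bodies in $\RR^n$.
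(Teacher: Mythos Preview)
Your overall strategy (align cohomology classes, then take a pointwise maximum, then apply Lemma~\ref{Lem:inclmoment}) matches the paper's, but two steps of your execution have genuine gaps.

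\textbf{Class alignment.} You assert that the correction currents ``vanish upon restriction to both $\cX^i_1$ and $X^i$''. This is false for the second degree of freedom you need. The Picard group of $\cX^i$ over that of $X^{i-1}\times\PP^1$ is generated by the exceptional divisor $[X^i]$, and the paper handles this by adding multiples of $[Z^i]$ (using $\{[Z^i]\}=\{[\cX^i_\infty]\}-\{[X^i]\}$). But $Z^i$ and $X^i$ are the two components of the central fiber and they \emph{intersect}, so $[Z^i]|_{X^i}\neq 0$. The paper's Lemma~\ref{lem:momnewdeg} shows that adding $c[Z^i]$ to $\cT_i$ forces one to also add $c\,\pi_{X^i}^*([Z^i]|_{X^i})$ to $\cT_{i+1}$ in order to preserve the compatibility $\cT_{i+1}|_{\cX^{i+1}_1}=\cT_i|_{X^i}$, and then checks that the moment body of $T_n$ is unchanged. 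Your proposal omits this propagation entirely.

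\textbf{Smoothness and compatibility.} The plain $\max$ is only continuous, not smooth, so condition~(2) of Definition~\ref{def:toricdeg} fails. Your remedy of applying Demailly regularization (Theorem~\ref{thm:Dem-reg}) is fatal here: that regularization alters the potential globally and does not preserve the exact restriction $\cT''_i|_{\cX^i_1}=\cT''_{i-1}|_{X^{i-1}}$, so the inductive compatibility breaks. The paper instead uses a regularized maximum $\max_{\mathrm{reg}}$ (smooth, convex, equal to $\max$ when $|x-y|\geq 1$), and for $i\geq 2$ does not simply take $\max_{\mathrm{reg}}(\phi_i,\phi'_i)$ but rather $\max_{\mathrm{reg}}(\psi_i,\max_{\mathrm{reg}}(\phi_i,\phi'_i))$, where $\psi_i$ is a potential whose restriction to $\cX^i_1$ equals the previously constructed $\cT''_{i-1}|_{X^{i-1}}$; one then arranges $\psi_i\geq\max_{\mathrm{reg}}(\phi_i,\phi'_i)+1$ on $\cX^i_1$ so that near that fiber the outer $\max_{\mathrm{reg}}$ equals $\psi_i$ exactly, forcing the compatibility. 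Your fallback approximation argument is also problematic: a Hausdorff limit of moment bodies of toric degenerations is not obviously the moment body of a toric degeneration.
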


The proof of Proposition~\ref{prop:momunion} will rely on the following lemma.

\begin{lemma} \label{lem:momnewdeg}
Let $\cT_{\bullet}$ be a toric degeneration of $(X,\xi)$. For any $i=1,\ldots,n$ and $b,c\in \mathbb{R}_{\geq 0}$, the currents $\tilde{\cT}_{\bullet}$ defined by 
\[
\tilde{\cT}_j\coloneqq 
\left\{ 
\begin{aligned}
    \cT_j,&\quad \text{if }j\neq i,i+1;\\
    \cT_i+b[\cX^i_{\infty}]+c[Z^i],&\quad \text{if }j=i;\\
    \cT_{i+1}+c[\pi_{X^i}^*([Z^i]_{|X^i})],& \quad \text{if }j=i+1\leq n
\end{aligned}
\right.
\]
also give a toric degeneration of $(X,\xi)$. Furthermore, we have that 
\[
\Delta(\tilde{T}_n)=\Delta(T_n).
\]
\end{lemma}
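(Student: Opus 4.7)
The plan is to verify in turn the three conditions of Definition~\ref{def:toricdeg} for the modified family $\tilde{\cT}_\bullet$, and then compute the moment body of the terminal toric current. The invariance and singularity conditions are routine: the added currents $b[\cX^i_\infty]$, $c[Z^i]$, and $c[\pi_{X^i}^*([Z^i]_{|X^i})]$ are all $(S^1)^j$-invariant (the torus acts fiberwise on $\cX^j\to\PP^1$ and preserves the components of the zero fiber), and their singular loci are analytic divisors, so one can enlarge each $A_j$ accordingly. None of the enlarged singular sets contain $\cX^j_1$ or $X^j$: the divisors $\cX^i_\infty$ and $Z^i$ lie in fibers disjoint from $\cX^i_1$, and $Z^i$ meets $X^i$ only in a divisor (the infinity section).

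For the gluing condition, the only nontrivial checks are at levels $i$ and $i+1$. Restricted to $\cX^i_1$, both $[\cX^i_\infty]$ and $[Z^i]$ vanish (they are supported in disjoint fibers), so $\tilde{\cT}_i|_{\cX^i_1}=\cT_i|_{\cX^i_1}$, preserving the interface with $\tilde{\cT}_{i-1}$. Restricted to $X^i$, the term $b[\cX^i_\infty]$ again vanishes, so $\tilde{\cT}_i|_{X^i}=T_i+c[Z^i]|_{X^i}$. Since $\pi_{X^i}$ restricts to the natural isomorphism $\cX^{i+1}_1\cong X^i$, one has $[\pi_{X^i}^*([Z^i]_{|X^i})]|_{\cX^{i+1}_1}=[Z^i]_{|X^i}$, so $\tilde{\cT}_{i+1}|_{\cX^{i+1}_1}=T_i+c[Z^i]|_{X^i}=\tilde{\cT}_i|_{X^i}$, as required.

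The critical step, needed to ensure the modifications stop at level $i+1$, is to show $\tilde{\cT}_{i+1}|_{X^{i+1}}=T_{i+1}$ whenever $i<n$. Geometrically, $Y_{i+1}\subset X^i$ is identified with the zero section of $N_{Y_i|X^{i-1}}$, while $[Z^i]|_{X^i}$ is identified with the section at infinity $\PP(N_{Y_i|X^{i-1}})$; these two submanifolds of $X^i=\PP(N_{Y_i|X^{i-1}}\oplus\CC)$ are disjoint. Hence the blow-up center $Y_{i+1}\times\{0\}$ used to form $\cX^{i+1}$ does not meet $[Z^i]|_{X^i}\times\{0\}$, and the pulled-back divisor $\pi_{X^i}^*([Z^i]_{|X^i})$ stays disjoint from the exceptional divisor $X^{i+1}$, so restricts to zero there. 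In particular, higher levels $j\geq i+2$ require no further correction.

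For the moment body identity: if $i<n$ then $\tilde T_n=T_n$, and the claim is trivial. If $i=n$, then $\tilde T_n=\tilde{\cT}_n|_{X^n}=T_n+c[Z^n]|_{X^n}=T_n+c[H_\infty]$, where $H_\infty$ is the hyperplane at infinity of $X^n\cong\PP^n$. Restricted to the open orbit $\CC^n\subset\PP^n$, the added divisor is supported outside the chart, so the local potentials coincide: $\tilde T_n|_{\CC^n}=T_n|_{\CC^n}=\ddc\phi$. The associated convex function $u_\phi$ on $\RR^n$ is unchanged, giving $\Delta(\tilde T_n)=\overline{\nabla u_\phi(\RR^n)}=\Delta(T_n)$. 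The main obstacle is the geometric bookkeeping around the propagation of the modification: choosing the correction term on $\cX^{i+1}$ so that it both restores compatibility at the level-$i$ interface and vanishes on $X^{i+1}$ to avoid creating a new incompatibility. This works out precisely because of the key disjointness of the zero and infinity sections of $\PP(N_{Y_i|X^{i-1}}\oplus\CC)$.
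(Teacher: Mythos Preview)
Your proof is correct and follows essentially the same approach as the paper; you are in fact more careful than the paper in verifying that the gluing condition (Condition~3) persists at the interface between levels $i+1$ and $i+2$, which the paper dismisses as ``clear by definition'' and only revisits implicitly in the case $i=n-1$. One small slip: $Y_{i+1}$ is not the full zero section of $N_{Y_i|X^{i-1}}$ but rather a submanifold of it (the zero section is $Y_i$); this does not affect your argument, which only needs $Y_{i+1}$ to be disjoint from the infinity section $\PP(N_{Y_i|X^{i-1}})$.
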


\begin{proof}
We first verify that $\tilde{\cT}_{\bullet}$ is a toric degeneration. It suffices to verify the three conditions in Definition~\ref{def:toricdeg}. Condition~1 follows from the facts that $\mathcal{T}_i$, $[\cX^i_{\infty}]$ and $[Z^i]$ are all $(S^1)^i$-invariant and that $\pi_{X^i}$ is $\mathbb{C}^*$-equivariant. Condition~2 and Condition~3 are clear by definition.

To see that the moment body remains unchanged, note that if $i<n-1$, then $\tilde{T}_n=T_n$, in which case the equality is obvious. If $i=n-1$, note that $Z^{n-1}$ does not intersect the zero section of $X^{n-1}$ which we identify with $Y_{n-1}$. From this it follows that $\pi_{X^{n-1}}^{-1}(X^{n-1}\cap Z^{n-1})$ does not intersect $X^n=\PP^n$. Since $\pi_{X^i}^*([Z^i]_{X^i})$ has support on $\pi_{X^{n-1}}^{-1}(X^{n-1}\cap Z^{n-1})$ we thus get that also in this case $\tilde{T}_n=T_n$. Now assume that $i=n$. Here we see that $\tilde{T}_n=T_n+c[Z_n]|_{\PP^n}$, but since $\PP^n\cap Z^n$ is the divisor at infinity and the moment body only depends on $\tilde{T}_n|_{\CC^n}$, we get that also in this case the moment body is unchanged.   
\end{proof}

\begin{proof}[Proof of Proposition~\ref{prop:momunion}]
    Let $\cT_{\bullet}$ and $\cT'_{\bullet}$ be the families of currents associated with the toric degenerations $(\PP^n,T_n)$ and $(\PP^n,T'_n)$. First we want to show that we can find toric degenerations $\tilde{\cT}_{\bullet}$ and $\tilde{\cT}'_{\bullet}$ such that   
    $\Delta(\tilde{T}_n)=\Delta(T_n)$ and $\Delta(\tilde{T}'_n)=\Delta(T'_n)$ and such that $\{\tilde{\cT}_i\}=\{\tilde{\cT}'_i\}$ for all $i=1,\ldots,n$.

    Let $\beta_i\coloneqq \{\cT_i\}$ and $\beta'_i\coloneqq \{\cT'_i\}$. Recall that $H^{1,1}(\cX^i,\RR)$ is generated by the pullback of $H^{1,1}(X^{i-1}\times \PP^1, \RR)$ together with the class $\{[X^i]\}$. Since ${\beta_1}|_{\cX^1_1}={\beta'_1}|_{\cX^1_1}=\xi$ we thus have that 
    \[
    \beta_1=\pi_{X^0}^*\xi+b_1\{[\cX^1_{\infty}]\}-c_1\{[X^1]\},\quad \beta'_1=\pi_{X^0}^*\xi+b'_1\{[\cX^1_{\infty}]\}-c'_1\{[X^1]\}
    \]
    for some real numbers $b_1,c_1,b_1',c_1'$. Now note that 
    \[
    \{[Z^1]\}=\{[\cX^1_0]-[X^1]\}=\{[\cX^1_0]\}-\{[X^1]\}=\{[\cX^1_{\infty}]\}-\{[X^1]\},
    \]
    so we can write 
    \[
    \beta_1=\pi_X^*\xi+(b_1-c_1)\{[\cX^1_{\infty}]\}+c_1\{[Z^1]\},
    \]
    \[
    \beta'_1=\pi_X^*\xi+(b'_1-c'_1)\{[\cX^1_{\infty}]\}+c'_1\{[Z^1]\}.
    \]
    From this it is clear that one can find positive numbers $d_1,d'_1, e_1,e'_1$ such that 
    \[
    \{\cT_1+d_1[\cX^1_{\infty}]+e_1[Z^1]\}=\{\Omega'_1+d'_1[\cX^1_{\infty}]+e'_1[Z^1]\}.
    \]
    Let $\tilde{\cT}_{\bullet}$ be defined by $\tilde{\cT}_1\coloneqq \Omega_1+d_1[\cX^1_{\infty}]+e_1[Z^1]$, $\tilde{\cT}_2\coloneqq \cT_2+e_1\pi^*_{X^1}([Z^1]_{|X^1})$ and $\tilde{\cT}_j\coloneqq \cT_j$ for $j>2$. Similarly, let $\tilde{\cT}'_1\coloneqq \cT'_1+d'_1[\cX^1_{\infty}]+e'_1[Z^1]$, $\tilde{\cT}'_2\coloneqq \cT'_2+e'_1\pi^*_{X^1}([Z^1]_{|X^1})$ and $\tilde{\cT}'_j\coloneqq \cT'_j$ for $j>2$. 
    
    It follows from Lemma~\ref{lem:momnewdeg} that $\tilde{\cT}_{\bullet}$ and $\tilde{\cT}_{\bullet}'$ are new toric degenerations such that 
    \[
    \Delta(\tilde{T}_n)=\Delta(T_n).
    \]
    We have also made sure that $\{\tilde{\cT}_1\}=\{\tilde{\cT}'_1\}$. If $\{\tilde{\cT}_i\}\neq\{\tilde{\cT}'_i\}$ for some $i>1$ we can use the same trick of modifying $\tilde{\cT}_{\bullet}$ and $\tilde{\cT}_{\bullet}'$ without changing the moment bodies, eventually achieving that $\{\tilde{\cT}_i\}=\{\tilde{\cT}'_i\}$ for all $i$.  

    Thus, without loss of generality we can assume that $\{\cT_i\}=\{\cT_i'\}$ for all $i=1,\ldots,n$. For all such $i$, pick a smooth closed real $(S^1)^i$-invariant $(1,1)$-form $\theta_i\in \{\cT_i\}$ and write $\cT_i=\theta_i+\ddc\phi_i$ and $\cT'_i=\theta_1+\ddc\phi'_i$ for some $\phi_i,\phi_i'\in \PSH(\mathcal{X}^i,\theta_1)$. Let also $\max_{\mathrm{reg}}(x,y)$ be a regularized maximum function with the properties that it is smooth and convex, 
    \[
    \max(x,y)\leq \max_{\mathrm{reg}}(x,y)\leq \max(x,y)+1,
    \]
    and such that 
    \[
    \max_{\mathrm{reg}}(x,y)=\max(x,y) \textup{ if } |x-y|\geq 1.
    \]
    
    Let $\cT''_1\coloneqq \theta_1+\ddc\max_{\mathrm{reg}}(\phi_1,\phi'_1)$. It is clear that $\cT''_1$ is $(S^1)$-invariant, has smooth local potentials on the complement of some analytic set which does not contain $\cX^1_1$ or $X^1$, and that $\{{\cT''_1}_{|\cX^1_1}\}=\xi$.

    If $\{\cT_2\}=\{\pi_{X^1}^*T_1\}+d_2\{[\cX^2_{\infty}]\}+e_2\{[Z^2]\}$, write $\pi_{X^1}^*T''_1+d_2[\cX^2_{\infty}]+e_2[Z^2]=\theta_2+\ddc\psi_2$ for some $\psi_2\in \PSH(\mathcal{X}^2,\theta_2)$. Since the current $\cT''_1$  is less singular than $\cT_1$ and $\cT'_1$ it follows that ${\psi_2}|_{\cX^2_1}$ is less singular than ${\phi_2}|_{\cX^2_1}$ and ${\phi'_2}|_{\cX^2_1}$. After possibly adding a constant we can thus assume that $\psi_2\geq\max_{\mathrm{reg}}(\phi_2,\phi'_2)+1$ on $\cX^2_1$. Now let $\cT''_2\coloneqq \theta_2+\ddc\max_{\mathrm{reg}}(\psi_2,\max_{\mathrm{reg}}(\phi_2,\phi'_2))$.

    It is clear that $\cT''_2$ is $(S^1)^2$-invariant, has smooth local potentials on the complement of some analytic set which does not contain $\cX^2_1$ or $X^2$. Also, since 
    \[
    \max_{\mathrm{reg}}(\psi_2,\max_{\mathrm{reg}}(\phi_2,\phi'_2))=\psi_2
    \]
    on $\cX^2_1$, we have that ${\cT''_2}_{|\cX^2_1}={\cT''_1}_{|X^1}$.
    
    Continuing this way we can construct currents $\cT''_i$ for all $i=1,\dots,n$, and as above one can easily check that $\cT''_{\bullet}$ will have the properties of a toric degeneration of $(X,\xi)$. We also see that the current $T''_n$ will be less singular than $T_n$ and $T'_n$, and so the desired inclusion 
    \[
    \Delta(T''_n)\supseteq \Delta(T_n)\cup \Delta(T'_n)
    \]
    follows from Lemma~\ref{Lem:inclmoment}.
\end{proof}

\begin{corollary}
    The moment body $\Delta^{\mu}(\xi)$ is convex.
\end{corollary}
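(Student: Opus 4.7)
The plan is to exploit the ``joining'' result of Proposition~\ref{prop:momunion} together with the convexity of individual moment bodies (Lemma~\ref{Lem:convmoment}) to reduce convexity of the closed union $\Delta^\mu(\xi)$ to convexity of a single enlarged moment body approximating both endpoints.

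First I would take two arbitrary points $p,q\in\Delta^\mu(\xi)$ and a parameter $t\in[0,1]$. By the definition of $\Delta^\mu(\xi)$ as the closure of the union of moment bodies $\Delta(T_n)$ over toric degenerations $(\PP^n,T_n)$ of $(X,\xi)$, I can write $p=\lim_k p_k$ and $q=\lim_k q_k$ where $p_k\in\Delta(T_n^{(k)})$ and $q_k\in\Delta({T'_n}^{(k)})$ for some sequences of toric degenerations.

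Next, for each fixed $k$, I apply Proposition~\ref{prop:momunion} to the two toric degenerations $(\PP^n,T_n^{(k)})$ and $(\PP^n,{T'_n}^{(k)})$ to produce a third toric degeneration $(\PP^n,{T''_n}^{(k)})$ whose moment body contains both $\Delta(T_n^{(k)})$ and $\Delta({T'_n}^{(k)})$. In particular both $p_k$ and $q_k$ lie in the single convex set $\Delta({T''_n}^{(k)})$ (convexity by Lemma~\ref{Lem:convmoment}), and hence
\[
tp_k+(1-t)q_k\in\Delta({T''_n}^{(k)})\subseteq\Delta^\mu(\xi).
\]

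Finally, letting $k\to\infty$ and using that $\Delta^\mu(\xi)$ is closed by definition, I conclude $tp+(1-t)q\in\Delta^\mu(\xi)$, proving convexity. There is no real obstacle here: all the substantive work (gluing two toric degenerations and comparing their moment bodies under a singularity order) was already done in Proposition~\ref{prop:momunion} and Lemmas~\ref{Lem:convmoment}--\ref{Lem:inclmoment}; the only subtlety to keep in mind is that since $\Delta^\mu(\xi)$ is the closure of a union rather than a union, one must pass to approximating sequences before invoking the convexity of the enlarged bodies.
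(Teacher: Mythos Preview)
Your proof is correct and follows essentially the same approach as the paper: both combine Lemma~\ref{Lem:convmoment} with Proposition~\ref{prop:momunion} to see that the pre-closure union is already convex (the paper phrases it as ``$\Delta^{\mu}(\xi)$ is the closure of a convex set''), while you spell out the approximation-by-sequences argument explicitly.
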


\begin{proof}
    From Lemma~\ref{Lem:convmoment} and Proposition~\ref{prop:momunion} it follows that $\Delta^{\mu}(\xi)$ is the closure of a convex set, and hence it is convex. 
\end{proof}

We now prove a volume inequality for the moment body.

\begin{proposition}\label{prop:volmuOkounkovupperbound}
Under the above assumptions, we have that 
\[
\vol_{\RR^n}(\Delta^{\mu}(\xi))\leq \frac{1}{n!}\vol(\xi).
\]  
\end{proposition}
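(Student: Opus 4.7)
The plan is to bound $\vol_{\RR^n}(\Delta(T_n))$ uniformly for each individual toric degeneration, and then use Proposition~\ref{prop:momunion} to pass to the supremum. Fix a toric degeneration $\cT_\bullet$ of $(X,\xi)$, and set $T_0\coloneqq \cT_1|_{\cX^1_1}\in\mathcal{Z}_+(X,\xi)$ and $T_i\coloneqq \cT_i|_{X^i}$ for $i=1,\ldots,n$. Proposition~\ref{prop:momcureq} gives $\vol_{\RR^n}(\Delta(T_n))=\frac{1}{n!}\int_{\PP^n}T_n^n$, so the task reduces to establishing $\int_{\PP^n}T_n^n\leq\vol(\xi)$.

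For this I would iterate the one-step inequality $\int_{X^i}T_i^n\leq\int_{X^{i-1}}T_{i-1}^n$ for $i=1,\ldots,n$ (with $X^0\coloneqq X$), and combine it with $\int_X T_0^n\leq\vol(\xi)$, which is the definition of the non-pluripolar volume. For each $i$, the deformation $\cX^i\to\PP^1$ has generic fiber biholomorphic to $X^{i-1}$ and reducible central fiber $X^i\cup Z^i$. Resolving the singularities of $\cT_i$ by a Hironaka-type modification $\widetilde{\cX}^i\to\cX^i$, the pullback can be written as $\tilde\beta+[\tilde D]$ with $\tilde\beta$ smooth semipositive and $\tilde D$ an effective SNC divisor meeting the strict transforms of $X^i$ and $Z^i$ transversely. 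On $\widetilde{\cX}^i$ the fiberwise non-pluripolar masses become honest intersection numbers, and since the fiber class for the projection $\cX^i\to\PP^1$ is constant in $t\in\PP^1$, specialising to $t=0$ distributes mass across the two components of the central fiber with an overall loss due to the singular limit:
\[
\int_{X^i}T_i^n\leq\int_{X^i}T_i^n+\int_{Z^i}(\cT_i|_{Z^i})^n\leq\int_{\cX^i_1}(\cT_i|_{\cX^i_1})^n=\int_{X^{i-1}}T_{i-1}^n.
\]
Chaining these $n$ inequalities yields $\int_{\PP^n}T_n^n\leq\int_X T_0^n\leq\vol(\xi)$.

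Finally, Proposition~\ref{prop:momunion} says that $\{\Delta(T_n)\}_{\cT_\bullet}$ is upward directed among convex bodies sitting in a common bounded region of $\RR^n$; extracting a countable cofinal increasing subfamily (by separability in the Hausdorff topology) and applying monotone convergence for Lebesgue measure gives $\vol_{\RR^n}(\Delta^\mu(\xi))=\sup_{\cT_\bullet}\vol_{\RR^n}(\Delta(T_n))\leq\frac{1}{n!}\vol(\xi)$. The hard part will be the one-step fiberwise mass inequality: rigorously tracking non-pluripolar masses across a degeneration whose central fiber is reducible. The Hironaka-resolution route is natural, but one must simultaneously control the Siu decomposition of $\cT_i$ and the strict transforms of the central-fiber components, and invoke mass-monotonicity (cf.~\cite[Theorem~1.16]{BEGZ10}, \cite[Theorem~1.2]{WN19b}) in a form valid for families over $\PP^1$.
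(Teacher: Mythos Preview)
Your global architecture matches the paper's exactly: use Proposition~\ref{prop:momunion} to reduce to a per-degeneration bound, invoke Proposition~\ref{prop:momcureq}, and then chain the one-step inequalities $\int_{X^i}T_i^n\leq\int_{X^{i-1}}T_{i-1}^n$ together with $\int_X T_0^n\leq\vol(\xi)$. The problem is entirely in your proposed proof of the one-step inequality.

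\textbf{First gap.} You propose to resolve the singularities of $\cT_i$ and write the pullback as $\tilde\beta+[\tilde D]$ with $\tilde\beta$ smooth semipositive. But Definition~\ref{def:toricdeg} only asks that $\cT_i$ have smooth local potentials on the complement of some analytic set $A_i$; it does \emph{not} say $\cT_i$ has analytic singularities in the sense of Definition~\ref{def-loc-analy-sing}. A closed positive current that is smooth off an analytic set can still have arbitrarily wild singularities along that set, so there is no Hironaka-type resolution available in general. This kills the route you outlined.

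\textbf{Second gap.} Even if a resolution existed, the displayed inequality
\[
\int_{X^i}T_i^n+\int_{Z^i}(\cT_i|_{Z^i})^n\leq\int_{\cX^i_1}(\cT_i|_{\cX^i_1})^n
\]
does not follow from ``the fiber class is constant''. Non-pluripolar masses are not cohomological invariants, and upper semicontinuity of fiberwise non-pluripolar mass at a \emph{reducible} central fiber is not a standard fact; the monotonicity results you cite (\cite[Theorem~1.16]{BEGZ10}, \cite[Theorem~1.2]{WN19b}) compare currents in a fixed class ordered by singularity type, not masses across different fibers of a family.

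\textbf{What the paper does instead.} The paper exploits the $S^1$-invariance built into Definition~\ref{def:toricdeg}, which your argument never uses. After adjusting $\cT_i$ by multiples of $[\cX^i_\infty]$ and $[Z^i]$ (or $[X^i]$) so that its class becomes a pullback $\pi_{X^{i-1}}^*\{T_{i-1}\}$, one writes $\tilde\cT_i=\pi_{X^{i-1}}^*\theta+\ddc\phi_i$ on $X^{i-1}\times\CC$. Since $\phi_i(x,\tau)$ is subharmonic and $S^1$-invariant in $\tau$, it is \emph{increasing} in $|\tau|$; now monotonicity of non-pluripolar masses \cite{WN19b} gives that $\int_{\cX^i_\tau}\cT_i^n$ is increasing in $|\tau|$ for $|\tau|>0$. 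The passage to $\tau=0$ is handled by a tubular-neighbourhood diffeomorphism and Fatou's lemma, using only that $\cT_i$ is smooth near generic points of $X^i$. No resolution is needed, and the component $Z^i$ never enters. So keep your skeleton, but replace the resolution-and-specialization engine by the $S^1$-monotonicity argument.
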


\begin{proof}
It follows from Proposition~\ref{prop:momunion} that the volume of $\Delta^{\mu}(\xi)$ is equal to the supremum of the volume of the moment bodies of the toric degenerations of $(X,\xi)$.  

Let $\cT_{\bullet}$ be a toric degeneration. We first claim that for all $i=1,\ldots,n$: 
    \begin{equation} \label{eq:momclaim}
    \int_{X^i}T_i^n\leq \int_{\cX^i_1}\cT_i^n.
    \end{equation}
    From the proof of Proposition~\ref{prop:momunion} we see that 
    $
    \{\cT_i\}=\pi_{X^{i-i}}^*\{T_{i-1}\}+b_i\{[\cX^i_{\infty}]\}-c_i\{[X^i]\}
    $
    for some real numbers $b_i,c_i$. We let
    \[
    \tilde{\cT}_i\coloneqq\left\{
    \begin{aligned}
        \cT_i-b_i[\cX^i_{\infty}]+c_i[X^i],& \quad \text{if }c_i\geq 0;\\
        \cT_i-(b_i-c_i)[\cX^i_{\infty}]-c_i[Z^i],& \quad \text{if }c_i<0.
    \end{aligned}
    \right.
    \]
     It follows that in both cases $\{\tilde{\cT_i}\}=\pi_{X^{i-i}}^*\{T_{i-1}\}$ and that $\tilde{\cT}_i$ is positive on $\cX^i\setminus \cX^i_{\infty}$. Let $\theta$ be a smooth closed real $(1,1)$-form in $\{T_{i-1}\}$. We can then write $\tilde{\cT}_i=\pi^*_{X^{i-1}}\theta+\ddc\phi_i$ where $\phi_i$ is a $\pi^*_{X^{i-1}}\theta$-psh function on $X^{i-1}\times \CC$. Since $\cT_i$ is $S^1$-invariant with respect to the action on the base, so is $\tilde{\cT}_i$, and therefore $\phi_i$ is $S^1$-invariant. For a fixed $x\in X^{i-1}$ we get that $\phi_i(x,\tau)$ is subharmonic in $\tau$, and since it also only depends on $|\tau|$ it follows that it is increasing in $|\tau|$. Since $\tilde{\cT}_i=\cT_i$ on $X^{i-1}\times \CC^*$ and using monotonicity \cite{WN19b} we get that 
     \[
     \int_{\cX^i_{\tau}}\cT_i^n=\int_{X^{i-1}}(\theta+\ddc\phi_i(\cdot,\tau))^n
     \]
     is increasing in $|\tau|$ for $|\tau|>0$. 

     Pick a tubular neighbourhood $U_i$ of $Y_i$ in $X^{i-1}$ together with an associated diffeomorphism $f$ between $N_{Y_i|X^{i-1}}$ and $U_i$. Let $F:N_{Y_i|X^{i-1}}\times \CC\to \cX^i$ be defined by $F(\zeta,\tau)\coloneqq (f(\tau\zeta),\tau)$ for $\tau\neq 0$ while $F(\zeta,0)\coloneqq \zeta\in X^i$. Then $F$ is a diffeomorphism between $N_{Y_i|X^{i-1}}\times \CC$ and its image. Pick a volume form $\mathrm{d}V$ on $N_{Y_i|X^{i-1}}$. Since $\cT_i$ has smooth potentials on the complement of an analytic set not containing $X^i$ we can write for $\tau\neq 0$: 
     \[
     F_{\tau}^*(({\cT_i}_{|\cX^i_{\tau}})^n)=g_{\tau}\,\mathrm{d}V,
     \]
     for $\tau=0$: 
     \[
     T_i^n=g_0\,\mathrm{d}V,
     \]
     and $g_{\tau}$ will converge pointwise to $g_0$ almost everywhere. It thus follows from Fatou's Lemma that 
     \[
     \int_{X^i}T_i^n=\int_{N_{Y_i|X^{i-1}}}g_0\,\mathrm{d}V\leq \varliminf_{\tau\to 0}\int_{N_{Y_i|X^{i-1}}}g_{\tau}\,\mathrm{d}V\leq \varliminf_{\tau\to 0}\int_{\cX^i_{\tau}}\cT_i^n\leq \int_{\cX^i_1}\cT_i^n,
     \]
     thus proving our claim \eqref{eq:momclaim}.

    Now we note that since ${\cT_1}|_{\cX^1_1}\in \mathcal{Z}_+(X,\xi)$ we have that $\int_{\cX^1_1}\cT_1^n\leq \vol(\xi)$ by definition. By Proposition~\ref{prop:momcureq} we also have that 
    \[
    \vol_{\RR^n}(\Delta(T_n))=\frac{1}{n!}\int_{\PP^n}T_n^n,
    \]
    and combined with the inequalities \eqref{eq:momclaim} the proposition follows.
\end{proof}

Given a big class $\xi$ and a flag $Y_{\bullet}$, we can construct the Okounkov body $\Delta_{Y_{\bullet}}(\xi)$ and the moment body $\Delta_{Y_\bullet}^{\mu}(\xi)$. We will now show that they in fact coincide, proving Theorem~\ref{thm_momentbody}. 

\begin{theorem} \label{thm:mombodeq8}

Under the above assumptions, we have that 
\begin{equation}\label{eq:Okounkovbodieseq}
\Delta^{\mu}_{Y_{\bullet}}(\xi)=\Delta_{Y_{\bullet}}(\xi).    
\end{equation}
\end{theorem}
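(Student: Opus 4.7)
The plan is to combine the upper bound from Proposition~\ref{prop:volmuOkounkovupperbound}, which gives $\vol_{\RR^n}(\Delta^\mu_{Y_\bullet}(\xi)) \leq \tfrac{1}{n!}\vol(\xi)$, with the volume identity $\vol_{\RR^n}(\Delta_{Y_\bullet}(\xi)) = \tfrac{1}{n!}\vol(\xi)$ from Theorem~\ref{thm:volumeidentity}, together with the reverse inclusion $\Delta_{Y_\bullet}(\xi) \subseteq \Delta^\mu_{Y_\bullet}(\xi)$. Once both are in hand, equality will follow by a standard convexity argument: $\Delta^\mu_{Y_\bullet}(\xi)$ is closed and convex and $\Delta_{Y_\bullet}(\xi)$ is a convex body, so $\mathrm{int}(\Delta^\mu_{Y_\bullet}(\xi)) \setminus \Delta_{Y_\bullet}(\xi)$ will be an open set of measure zero, hence empty, giving $\Delta^\mu_{Y_\bullet}(\xi) = \overline{\mathrm{int}\,\Delta^\mu_{Y_\bullet}(\xi)} \subseteq \Delta_{Y_\bullet}(\xi)$.

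To prove the inclusion $\Delta_{Y_\bullet}(\xi) \subseteq \Delta^\mu_{Y_\bullet}(\xi)$, I would show that for every $T \in \Omega(X,\xi)$ there is a toric degeneration $(\PP^n, T_n)$ of $(X,\xi)$ with $\nu_{Y_\bullet}(T) \in \Delta(T_n)$. The construction would proceed inductively along the deformation-to-the-normal-cone chain. Starting from $T$ on $X$, I would consider $\pi_X^* T$ on $X \times \PP^1$ and pull it back via $\mathrm{Bl}\colon \cX^1 \to X \times \PP^1$. Using Siu's decomposition to write $T = \nu_1(T) [Y_1] + T^{\mathrm{res}}$, I would verify that $\cT_1 \coloneqq \mathrm{Bl}^* \pi_X^* T - \nu_1(T) [X^1]$ is a closed positive $S^1$-invariant current on $\cX^1$ with $\cT_1|_{\cX^1_1} = T$, and that its restriction $T_1 \coloneqq \cT_1|_{X^1}$ is an $S^1$-invariant closed positive current on $X^1$ with $\nu(T_1, Y_1^{X^1}) = 0$, where $Y_1^{X^1}$ is the copy of $Y_1$ in $X^1 = \PP(N_{Y_1|X} \oplus \CC)$ given by the zero section of $N_{Y_1|X}$. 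By Lemma~\ref{lma:flagvaluationchange} combined with Theorem~\ref{thm:liftflaggeneral}, the tail Okounkov valuation of $T_1$ with respect to $(Y_2, \dots, Y_n) \subset Y_1^{X^1}$ will match the tail $(\nu_2(T), \dots, \nu_n(T))$ of $\nu_{Y_\bullet}(T)$. Iterating the same construction on $\cX^2, \ldots, \cX^n$ produces a toric current $T_n$ on $\PP^n$ whose Okounkov valuation with respect to the toric coordinate flag equals $\nu_{Y_\bullet}(T)$. Since the Okounkov valuation of a toric current coincides with the componentwise minimum vertex of its moment body (a direct check on $u_\phi$ using that the infimum of each partial derivative computes the generic Lelong number along the corresponding coordinate hyperplane), this places $\nu_{Y_\bullet}(T) \in \Delta(T_n) \subseteq \Delta^\mu_{Y_\bullet}(\xi)$.

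The hard part will be the inductive positivity control. At each step the pullback of the previous current acquires a $[X^i]$-component whose multiplicity is determined by the Lelong number inherited from below; I will need to subtract these divisorial contributions (in the spirit of Lemma~\ref{lem:momnewdeg}) to obtain a current satisfying the smoothness condition of Definition~\ref{def:toricdeg}(2), while simultaneously ensuring that the resulting current restricts correctly to $X^i$ to preserve the tail of the Okounkov valuation. Controlling this through $n$ iterations and establishing that the limiting toric current $T_n$ has Okounkov valuation exactly $\nu_{Y_\bullet}(T)$ is the technical heart of the argument, and the gentle analytic singularities of $T$ together with Siu's decomposition from Lemma~\ref{lem:Siu-deomp-lem} are likely to be indispensable here.
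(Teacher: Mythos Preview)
Your overall strategy is the same as the paper's: show $\Delta_{Y_\bullet}(\xi)\subseteq\Delta^\mu_{Y_\bullet}(\xi)$ by constructing, for each $T\in\Omega(X,\xi)$, a toric degeneration with $\nu_{Y_\bullet}(T)$ in its moment body, and then combine Proposition~\ref{prop:volmuOkounkovupperbound} with Theorem~\ref{thm:volumeidentity} and convexity to upgrade the inclusion to equality. Your formula $\cT_1=\mathrm{Bl}^*\pi_X^*T-\nu_1(T)[X^1]$ even unwinds to exactly the paper's $\cT_1=\pi_X^*\theta+\nu_1[D_1]+\ddc\pi_X^*\phi$, since $\pi_X^*[Y_1]=[D_1]+[X^1]$.

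That said, a few details are off and you are making the induction harder than it is. First, the claim $\nu(T_1,Y_1^{X^1})=0$ is wrong: the strict transform $D_1$ of $Y_1\times\PP^1$ meets $X^1$ precisely in the zero section $Y_1^{X^1}$, so $T_1=\cT_1|_{X^1}$ carries the summand $\nu_1[Y_1^{X^1}]$ and hence $\nu(T_1,Y_1^{X^1})=\nu_1$. Second, the appeal to Lemma~\ref{lma:flagvaluationchange} and Theorem~\ref{thm:liftflaggeneral} is misplaced; those compare valuations across \emph{bimeromorphic} maps, whereas here you are restricting to the central fibre of a degeneration. The paper bypasses this by writing $T_1$ explicitly as $\nu_1[Y_1]+\pi_{Y_1}^*\big((T-\nu_1[Y_1])|_{Y_1}\big)$ and then peeling off $\nu_2[W_2]$ with $W_2=\pi_{Y_1}^{-1}(Y_2)$, and so on; each $T_i$ is visibly the pullback of the restricted current from below plus already--extracted divisorial pieces, so positivity and the smoothness condition in Definition~\ref{def:toricdeg}(2) are automatic from the analytic singularities of $T$. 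Third, after $n$ steps one lands on $T_n=\nu_1[Y_1]+\nu_2[W_2]+\cdots+\nu_n[W_n]$, whose toric potential on $\CC^n$ is $\sum_i\nu_i\log|z_i|^2$; hence $\Delta(T_n)$ is the \emph{single point} $\{\nu_{Y_\bullet}(T)\}$, so the ``minimum vertex'' statement, while true, is unnecessary. In short: the anticipated ``hard part'' dissolves once you compute $T_1$ explicitly and iterate.
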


\begin{proof}
Pick $T_0\in\Omega(X,\xi)$ and write $\nu_{Y_{\bullet}}(T_0)=(\nu_1,\dots,\nu_n)$. Pick also a smooth real closed $(1,1)$-form $\theta$ in the class $\xi-\nu_1[Y_1]$.
We can thus write  $T_0=\theta+\nu_1[Y_1]+\ddc\phi$, where $\phi\in \PSH(X,\theta)$ restricts to a $\theta|_{Y_1}$-psh function on $Y_1$. Let $D_1$ denote the strict transform of $Y_1\times \PP^1$ in $\cX^1$,  and let $\cT_1\coloneqq \pi^*_X\theta+\nu_1[D_1]+\ddc \pi_X^*\phi$. We see that $\cT_1$ is a closed positive $S^1$-invariant current on $\cX^1$ which restricts to $T_0$ on $\cX^1_1$, and which has a continuous potential away from an analytic set not containing $\cX^1_1$ or $X^1$.

Note that $T_1=\nu_1[Y_1]+\pi^*_{Y_1}\theta|_{Y_1}+\ddc\pi_{Y_1}^*(\phi|_{Y_1})$, where $\pi_{Y_1}$ denotes the projection from $X^1=\PP(N_{Y_1|X}\oplus \CC)$ to the zero section identified with $Y_1$. We now see that we can write $T_1=\theta_1+\nu_1[Y_1]+\nu_2[W_2]+\ddc\phi_1$, where $W_2\coloneqq \pi_{Y_1}^{-1}(Y_2)$, $\theta_1$ and $\phi_1$ are $S^1$-invariant and $\phi_1$ restricts to a ${\theta_1}|_{Y_2}$-psh function on $Y_2$.

Let $D_1$ and $D_2$ denote the strict transforms of $Y_1\times \PP^1$ and $W_2\times \PP^1$ in $\cX^2$ respectively. Let $\cT_2\coloneqq \pi^*_{X^1}\theta_1|_{Y_2}+\nu_1[D_1]+\nu_2[D_2]+\ddc \pi_{X^1}^*(\phi_1|_{Y_2})$. We see that $\cT_2$ is a closed positive $(S^1)^2$-invariant current on $\cX^1$ which restricts to $T_1$ on $\cX^2_1$, and which has a continuous potential away from an analytic set not containing $\cX^2_1$ or $X^2$. 

Note that $T_2=\nu_1[Y_1]+\nu_2[W_2]+\pi^*_{Y_2}{\theta_1}|_{Y_2}+\ddc \pi_{Y_2}^*(\phi_1|_{Y_2})$, where $\pi_{Y_2}$ denotes the projection from $X^2$ to $Y_2$. We now see that we can write $T_2=\theta_2+\nu_1[Y_1]+\nu_2[W_2]+\nu_3[W_3]+\ddc\phi_2$, where $W_3\coloneqq \pi_{Y_2}^{-1}(Y_3)$, $\theta_2$ and $\phi_2$ are $(S^1)^2$-invariant and $\phi_2$ restricts to a ${\theta_2}|_{Y_3}$-psh function on $Y_3$.

Continuing this way we get a toric degeneration $\cT_{\bullet}$ of $(X,\xi)$ such that $T_n=\nu_1[Y_1]+\nu_2[W_2]+\dots+\nu_n[W_n]$. The divisors $Y_1,W_2,\dots,W_n$ are toric divisors and in the standard toric coordinates $z_i$ we have that $Y_1=\{z_1=0\}, W_2=\{z_1=z_2=0\}$, and so on. Thus, on $\CC^n$, $T_n$ has the toric potential $\psi=\nu_1\log|z_1|^2+\dots+\nu_n\log|z_n|^2$, and $u_{\psi}(x_1,\dots,x_n)=\nu_1 x_1+\dots+\nu_n x_n$. We thus see that $\Delta(T_n)=\{(\nu_1,\dots,\nu_n)\}=\{\nu(T_0)\}$, hence $\Delta_{Y_{\bullet}}(\xi)\subseteq \Delta^{\mu}_{Y_{\bullet}}(\xi).$
In particular,
\[
\vol_{\mathbb{R}^n}(\Delta_{Y_{\bullet}}(\xi))\leq \vol_{\mathbb{R}^n}(\Delta^{\mu}_{Y_{\bullet}}(\xi))\leq \frac{1}{n!}\vol(\xi)=\vol_{\mathbb{R}^n}(\Delta_{Y_{\bullet}}(\xi)),
\]
where the second inequality follows from Proposition~\ref{prop:volmuOkounkovupperbound} and the equality follows from Theorem~\ref{thm:volumeidentity}. It follows that all inequalities are actual equalities. Since $\Delta_{Y_{\bullet}}(\xi)$ has positive volume, we conclude \eqref{eq:Okounkovbodieseq}.
\end{proof}

\begin{corollary} \label{cor:momapprox}
    For any toric degeneration $(\PP^n,T_n)$ of $(X,\xi)$ we have that 
    \[
    \Delta(T_n)\subseteq \Delta(\xi).
    \]
    Furthermore, for any $\epsilon>0$ we can find a toric degeneration $(\PP^n,T_n)$ of $(X,\xi)$ such that the Hausdorff distance between $\Delta(T_n)$ and $\Delta(\xi)$ is less than $\epsilon$. 
\end{corollary}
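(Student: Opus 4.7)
The first inclusion is immediate from Theorem~\ref{thm:mombodeq8}: by the very definition, $\Delta^\mu(\xi)$ is the closure of the union of the moment bodies $\Delta(T_n)$ ranging over all toric degenerations of $(X,\xi)$, so each individual $\Delta(T_n)$ sits inside $\Delta^\mu(\xi)=\Delta(\xi)$.

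For the Hausdorff approximation, my plan is to exploit the directed structure on the family of toric degenerations provided by Proposition~\ref{prop:momunion}. Since $\Delta(\xi)$ is compact, I would first cover it by finitely many open Euclidean balls $B(p_1,\varepsilon/2),\ldots,B(p_N,\varepsilon/2)$. By Theorem~\ref{thm:mombodeq8}, the union $\bigcup_{(\PP^n,T_n)}\Delta(T_n)$ is dense in $\Delta(\xi)$, so for each $i=1,\dots,N$ I can pick a toric degeneration $(\PP^n,T_n^{(i)})$ of $(X,\xi)$ whose moment body meets $B(p_i,\varepsilon/2)$.

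Next, I would apply Proposition~\ref{prop:momunion} inductively $N-1$ times to produce a single toric degeneration $(\PP^n,T_n)$ of $(X,\xi)$ whose moment body contains $\bigcup_{i=1}^N \Delta(T_n^{(i)})$. In particular, $\Delta(T_n)$ meets each ball $B(p_i,\varepsilon/2)$, so every point of $\Delta(\xi)$ is within distance $\varepsilon$ of $\Delta(T_n)$. Combined with $\Delta(T_n)\subseteq\Delta(\xi)$ from the first part, this bounds the Hausdorff distance by $\varepsilon$.

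The argument is essentially formal given the two ingredients already proved. The only mild technicality is the induction used to merge $N$ toric degenerations into one, but this follows at once from the binary statement of Proposition~\ref{prop:momunion}; I do not expect any real obstacle here.
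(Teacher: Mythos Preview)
Your proof is correct and follows essentially the same approach as the paper: both parts rest on Theorem~\ref{thm:mombodeq8} together with Proposition~\ref{prop:momunion}. The paper's own argument for the second claim is terser, merely asserting that the Hausdorff approximation ``follows from Proposition~\ref{prop:momunion}''; your compactness-and-covering argument is exactly the natural way to unpack that sentence.
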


\begin{proof}
    Since by definition $\Delta(T_n)\subseteq \Delta^{\mu}(\xi)$, the first claim follows from Theorem~\ref{thm:mombodeq8}. 
    
    That the moment body $\Delta^{\mu}(\xi)$ can be approximated arbitrarily well (with respect to the Hausdorff distance) by moment bodies $\Delta(T_n)$ of toric degenerations $(\PP^n,T_n)$ follows from Proposition~\ref{prop:momunion}, so combined with Theorem~\ref{thm:mombodeq8} we get the second claim. 
\end{proof}

Corollary~\ref{cor:momapprox} provides a strong link between transcendental Okounkov bodies and toric degenerations, which could be useful in further investigations, for example studying the transcendental Chebyshev transform.

\bigskip

\noindent {\sc Tam\'as Darvas, University of Maryland}\\
{\tt tdarvas@umd.edu}\vspace{0.1in}

\noindent {\sc Rémi Reboulet, David Witt Nystr\"om,  Chalmers University of Technology and University of Gothenburg}\\
{\tt reboulet@chalmers.se, wittnyst@chalmers.se}\vspace{0.1in}

\noindent {\sc Mingchen Xia, Chalmers Tekniska Högskola and Institute of Geometry and Physics, USTC}\\
{\tt xiamingchen2008@gmail.com}\vspace{0.1in}

\noindent {\sc Kewei Zhang,  Beijing Normal University}\\
{\tt kwzhang@bnu.edu.cn}

\end{document}